\newcommand{\reels}{\mathbb{R}}
\newcommand{\esp}{\mathbb{E}}
\newcommand{\proba}{\mathbb{P}}
\newcommand{\nat}{\mathbb{N}}
\newtheorem{theoreme}{Theorem}[section]
\newtheorem{proposition}[theoreme]{Proposition}
\newtheorem{corollaire}[theoreme]{Corollary}
\newtheorem{assumption}[theoreme]{Assumption}
\theoremstyle{definition}
\newtheorem{definition}[theoreme]{Definition}
\theoremstyle{remark}
\newtheorem{remarque}{Remark}
\newcommand{\SOS}[1]{\ifthenelse{\boolean{DisplaySOS}}{{\textcolor{blue}{\textbf{Note:} [#1]}}}{}}
\title{A new stochastic 
	STDP Rule \\ in a neural Network Model}
\author{Pascal Helson\footnote{pascal.helson@inria.fr}}
\date{Draft \today}
\begin{document}
	\maketitle
	\tableofcontents
\newpage


	\section*{Abstract}
	Thought to be responsible for memory, synaptic plasticity has been widely studied in the past few decades. One example of plasticity models is the popular Spike Timing Dependent Plasticity (STDP). There is a huge litterature on STDP models. Their analysis are mainly based on numerical work when only a few has been studied mathematically. Unlike most models, we aim at proposing a new stochastic STDP rule with discrete synaptic weights. It brings a new framework in order to use probabilistic tools for an analytical study of plasticity. A separation of time-scale enables us to derive an equation for the weights dynamics, in the limit plasticity is infinitely slow compare to the neural network dynamic. Such an equation is then analysed in simple cases which show counter intuitive result: divergence of weights even when integral over the learning window is negative. Finally, without adding constraints on our STDP, such as bounds or metaplasticity, we are able to give a simple condition on parameters for which our weights' process remains ergodic. This model attempts to answer the need for understanding
	the interplay between the weights dynamics and the neurons ones.
\section{Introduction}
\vspace{-1em}
A huge amount of studies have focused on neural networks dynamics in order to reproduce biological phenomena observed in experiments. Thereby, there exist many different individual neuron models from the two states neurons to the adaptive exponential integrate-and-fire \cite{izhikevich_dynamical_2007,gerstner_spiking_2002}. Compare to this kind of literature, plasticity in recurrent networks has been well less studied. One reason is because it adds an additional layer of complexity to existing models despite being a candidate for memory formation, learning, etc \cite{brunel_is_2016,benna_computational_2016}. 
\\
In the beginning, plasticity models were based on firing rates~\cite{bienenstock_theory_1981}. Later on, as suggested by Hebb's in 1949~\cite{hebb_organisation_1949}, the crucial role of precise spikes timings was proved experimentally and gave rise to Spike-Timing Dependent Plasticity (STDP)~\cite{markram1997regulation,bi_synaptic_1998,markram_history_2011}. Following such a breakthrough, numerous STDP models emerged. They were associated with neural networks of either Poisson neurons~\cite{kempter_hebbian_1999,kempter_intrinsic_2001,gilson_emergence_2009} or continuous model of neurons~\cite{abbott_synaptic_2000,clopath_connectivity_2009,ocker_self-organization_2015}.
Here, we would like to present a new STDP rule which 
is implemented in the well-known stochastic Wilson-Cowan  model of spiking neurons as presented in~\cite{benayoun_avalanches_2010}. More precisely, because of the plasticity rule, our model is a piecewise deterministic Markov process~\cite{davis_piecewise-deterministic_1984,davis_markov_1993} whereas it is a pure point process in~\cite{benayoun_avalanches_2010}.\\\\
Motivations for proposing such a new model are four folds. First, although mechanisms involved in plasticity are mainly stochastic\, such as the activation of ions channels and proteins, the majority of studies on STDP are implemented using a deterministic description or an extrinsic noise source~\cite{morrison_phenomenological_2008,clopath_connectivity_2009,graupner_calcium-based_2012}. One exception is the stochastic STDP model proposed by Appleby and Elliott in~\cite{appleby_synaptic_2005,appleby_stable_2006}. The stochasticity of their model lies in the learning window size. They analyse 
 the dynamic of the weights of one target cell innervated by a few Poisson neurons. A fixed point analysis enabled them to show that their model is not relevant in the pair-based case and that multispike interactions are required to get stable competitive weights dynamics. Second, most studies are based on simulations and their analyses, thus there is still a need to find a good mathematical framework, see \cite{galtier_biological_2013,litwin_formation_2014, ocker_self-organization_2015}. We propose here a mathematical analysis based on probabilistic methods which leads to a control of weights through the study of their dynamics on their slow time scale. Indeed, long term plasticity timescale ranges from minutes to more than one hour. On the other hand, a spike lasts for a few milliseconds~\cite{morrison_phenomenological_2008}. Thus, third, there is a need to understand how to bridge this time scale gap between the synapse level and the network one\cite{fox_integrating_2017,zenke_temporal_2017,turrigiano_dialectic_2017}. Finally, the interplay between the weights dynamics and the neurons ones is not yet fully understood and we think the study of recurrent networks is necessary to bring some basis to fully numerical studies.\\\\
Such motivations impose some constraints on our model. It has to be rich enough to reproduce biological phenomena, simple enough to be mathematically tractable and easily simulated with thousands of neurons. Finally, it has to enable us to observe macroscopic effects out of microscopic events. The Wilson-Cowan model has been widely studied~\cite{bressloff_metastable_2010,benayoun_avalanches_2010,litwin_formation_2014} and reproduces many biological features of a network such as oscillation and bi-stability for example. On the other hand, based on experimental evidence \cite{bi_synaptic_1998,ribrault_stochasticity_2011}, we propose a new STDP rule with intrinsic noise with fixed synaptic weight increment \cite{oconnor_graded_2005}. This allows to control independently the synaptic weight increment and the probability of a plasticity event. Indeed, several pairs protocol are required for the induction of plasticity \cite{bi_synaptic_1998,markram1997regulation}.

Thus, we can produce a mathematical analysis by studying the Markov process composed of the following three components: the synaptic weight matrix, the inter-spiking times and the neuron states. In the context of long term plasticity, synaptic weights dynamics are much slower than the neural network one. A timescale analysis enables us to remove the neurons dynamics from the equations. Then we can derive an equation for the slow weights dynamics alone, in which neurons dynamics are replaced by their stationary distributions. Thus, we don't need to simulate the dynamics of thousands of fast neurons and we obtain a much easier equation to analyse. We then discuss the implications of such derivation for learning and adaptation in neural networks.\\
A similar analysis has been done in a few papers with different mathematical tools and models~\cite{kempter_hebbian_1999,kempter_intrinsic_2001,ocker_self-organization_2015,gilson_emergence_2009,gilson_emergence_2010,lajoie_correlation-based_2017}. When the two first one studied only one postsynaptic neuron, the last ones had a look at recurrent networks. Thanks to a separation of time scale, they derive an equation for weights in which STDP appears in an integral of the STDP curve against cross-correlation matrix. The main problem is the computation of such a matrix, they use Taylor expansion and Fourier analysis to derive estimations of it. We don't need such an estimation for our analysis thanks to probabilistic methods.

\section{Presentation of the model and notations}

 As in all model of neural networks with plastic connections, one can separate the neuron model and the plasticity one. Our neuron model is the well-known stochastic Wilson-Cowan model of spiking neurons presented in~\cite{benayoun_avalanches_2010}. In such a model, neurons are binary, meaning they are either at rest, state 0, or spiking, state 1. 
 This model has been widely studied in the case of fix weights and presents realistic features such as oscillations or bistable phenomenon, see~\cite{bressloff_metastable_2010}. However, there are only few studies with plasticity, see for instance with an Ising model in~\cite{pechersky_stochastic_2017}.\\
 We implement plasticity in this model in a stochastic way. Indeed, our plasticity rule depends on the precise spike times and thus has the same form as STDP, see~\cite{markram_spike-timing-dependent_2012} for an overview, but is not deterministic: in the situation of correlated spikes, weights will change or not according to a certain probability.\\
 First, we are interested in excitatory neurons, as in most models inhibitory neurons are not plastic, so the synaptic weights will be positive. Also, we suppose they are all to all connected so this positivity will be strict. {\color{black}We will discuss about these assumptions at the end.} Therefore, we first give some global notations, then explain the neuron model, the plasticity rule, and finally we gather these dynamics in the generator of the process.\\
\vspace{5em}

 \noindent We are interested in analysing the time continuous Markov process $(W_t,S_t,V_t)_{t\geq 0}$ where:
 \begin{itemize}
 	\item [-] $W_t\in \left\{\Delta w K, K\in E_0\right\}$ synaptic weights matrix, $E_0 = \left\{K, K\in \nat^{N^2},\ K_{ij}>0\ \forall i\neq j\ and\ K_{ii}=0\ \forall i\right\}$, $\Delta w \in \reels_*^+$ and $W_0\in \left\{\Delta w K, K\in E_0\right\}$, $W_t^{ij}$ weight of the connection from neuron $i$ to $j$at t.
 	\item [-] $S_t \in \reels_+^N$ vector of times from last spikes of neurons.
 	\item [-] $V_t \in I=\{0,1\}^N$ neuron system state.
 \end{itemize}
 As weights dynamics and the neural network one will be separated, we spare the global state space $E$ in two spaces. Hence, in the following we denote $E_1=\left\{\Delta w K, K\in E_0\right\}$, $E_2=\reels_+^N\times I$ such that $E=E_1\times E_2$.\\
 
 \subsubsection*{Neuron model}
 Let's define the dynamic of the process. It is a recurrent neural plastic network with Poisson neurons in interaction. Each neuron jumps with an inhomogeneous rate between two states: 0 and 1. This rate depends on the network state and the weights matrix:
\begin{align}\label{jump}
  	\mathrm{0} \xrightleftharpoons[\beta]{\alpha_i(W_t,V_t)} \mathrm{1}						
\end{align}
Where $\alpha_i$ is given by $\xi_i:\reels\mapsto\reels_*^+$ 
 bounded, positive and nondecreasing:
 \begin{align}\label{neuron-rate}
 	\alpha_i(W_t,V_t)=\xi_i\left( \sum^N_{j=1}W_t^{ji}V_t^j \right)
 \end{align}

As the neuron activity is never null, we will consider that for all $i$, $\inf_{x\in \reels}\xi_i(x)\geq \alpha_m>0$. Hence, $\alpha_i$ is uniformly bounded in $w$ and $v$ for all $i$:
	\[0<\alpha_m= \min_i\left(\inf_{x\in \reels}\xi_i(x)\right)\leq \alpha_i(w,v)\leq \alpha_M=\max_i\left(\sup_{x\in \reels}\xi_i(x)\right)\]

\subsubsection*{Plasticity rule}
The basic idea of STDP is that of the Hebb's law (1949):\\
\textquotedblleft \textit{When an axon of cell A[...] repeatedly or persistently takes part in firing (a cell B), [...]A's efficiency, as one of the cells firing B, is increased}\textquotedblright~\cite{hebb_organisation_1949}.\\
STDP is a bit more complex as it completes this law with the possibility for weights to decrease when they are decorrelated.


We expose our plasticity model through an example. First, weights can change only when a neuron spikes that we define as the jump from 0 to 1 (we could have chosen from 1 to 0 
. So suppose the neuron $i$ spikes at time $t$. Then, weights related to this neuron, that is to say $W_t^{ji}$ and $W_t^{ij}$ for all $j\neq i$, have a certain probability to jump. This differs from models we can find in the literature for which weights' jumps are systematic but small~\cite{kempter_hebbian_1999,abbott_synaptic_2000,morrison_phenomenological_2008}. Here, the jump is not small but happens with a small probability: $W_t^{ji}$ has probability $p^+(S_t^j)$ to increase and $W_t^{ij}$ decrease with probability $p^-(S_t^j)$. These probabilities depends on the inter-spiking times given by $S_t^j$:\\
\begin{figure}[h!]
	\centering
	\includegraphics[scale=0.4]{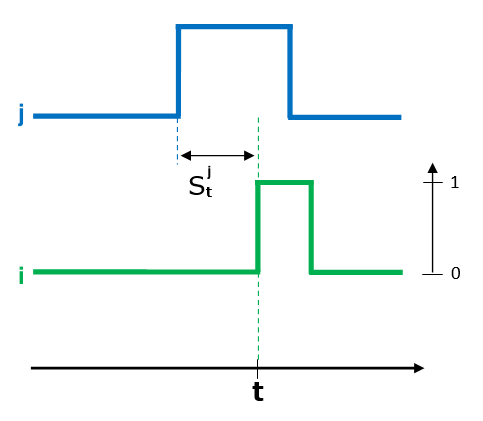}\hspace{3em}
	\includegraphics[scale=0.25]{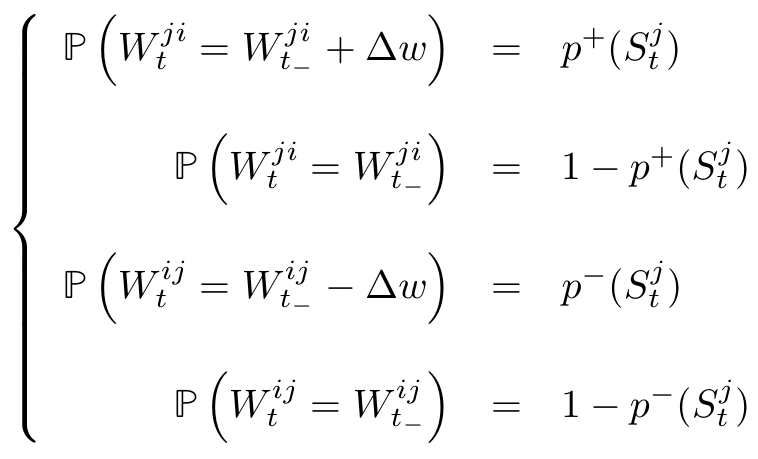}
	\caption{\small Dynamics of neurons $i$ and $j$over time, and the corresponding probability of jump for weights}
\end{figure}
\newpage
As the classic STDP curve, found by Bi\&Poo~\cite{bi_synaptic_1998}, suggests it, we take the following probability functions in our examples, with $0<A_+,\ A_-\leq 1$ and $\tau_+,\ \tau_->0$:
\begin{align}\label{proba-poids}
p^+(s)=A_+e^{-\frac{s}{\tau_+}}\ \ \text{and}\ \ p^-(s)=A_-e^{-\frac{s}{\tau_-}} 
\end{align}
\begin{remarque}
	{\it By definition of $E_1$ and $\alpha_i$, we study excitatory neurons. {\color{black} We see at the end how to extend our results to inhibitory-excitatory neurons.} Also, we remark that $W_t^{ii}$ stays constant and as $W_0^{ii}=0$ for all $i$, $W_t^{ii}=0$ for all $t$. {\color{black} We will discuss this assumption later on. Finally, $(S_t)_{t\geq 0}$ is crucial for our process to be Markovian.}}
\end{remarque}

\subsubsection*{Generator of the process}
Now we know how the process works, we can write its infinitesimal generator. To do so, we need the following notations. We denote by $G_i^w$ all reachable weights after a spike of neuron $i$ 
 while the current weight is $w\in E_1$. Thus:

\[G_i^w=\left\{w+\Delta w 
\left(\begin{bmatrix}
0 & \ldots & 0 & \ & 0 & \ldots & 0\\
\vdots & \ & \vdots & \ &\vdots & \ & \vdots \\ \\
\vdots & \ & \vdots & \vec{\zeta}_p &\vdots & \ & \vdots \\ \\
\vdots & \ & \vdots & \ &\vdots & \ & \vdots \\
0 & \ldots & 0 & \ & 0 & \ldots & 0
\end{bmatrix}
-	
\left.\underbrace{\begin{bmatrix}
	0 & \ldots & 0\\
	\ & \vdots  \\
	0 & \ldots & 0\\
	\ & \vec{\zeta}_d \\
	0 & \ldots & 0\\
	\ & \vdots \\
	0 & \ldots & 0
	\end{bmatrix}}_{N\times N\  matrix}\right.\right),\ (\vec{\zeta_p},\vec{\zeta_d})\in F^w_i\right\}\]

Where 
\[F^w_i=\left\{(\vec{\zeta_p},\vec{\zeta_d}),\vec{\zeta_d}=\left[\zeta_d^1,...,\zeta_d^N\right], \vec{\zeta_p}=
\begin{bmatrix}
\zeta_p^1 \\
\vdots \\
\zeta_p^N
\end{bmatrix},\zeta_d^j,\zeta_p^j \in \{0,1\},\zeta_d^i=\zeta_p^i=0\text{ and }\zeta_d^j=0\text{ if } w^{ij}=\Delta w\right\}\]
We call $Z_p$\label{def_Z} (respectively $Z_d$) the matrix associated to the vector $\vec{\zeta_p}$ (respectively $\vec{\zeta_d}$). As each weight jumps independently whenever a neuron $i$ spikes, we can decompose the probability of jumping to a certain state as the product of probabilities to jump or not for each weights. We want to compute $\phi^i(s,\tilde{w},w)$, the probability of jumping in a given $\tilde{w}\in G_i^w$ knowing the neuron $i$ spikes. Let $\tilde{w}=w+\Delta w (Z_p+Z_d)$, 
the probability for $w^{ji}$ to increase ($\zeta_p^j=1$) 
is $p_+(s_j)$ when the probability to stay the same ($\zeta_p=0$) is $\left(1-p_+(s_j)\right)$, for all $j\neq i$. This will appear as $\zeta_p^j\ p^+(s_j)+(1-\zeta_p^j)(1-p^+(s_j))$ in $\phi^i(s,\tilde{w},w)$:
\begin{align}\label{def-proba-saut}
\phi^i(s,\tilde{w},w)=\Phi^i(s,\vec{\zeta_p},\vec{\zeta_d})=\prod_{j\neq i}\left[\zeta_p^j\ p^+(s_j)+(1-\zeta_p^j)(1-p^+(s_j))\right]\left[\zeta_d^j\ p^-(s_j)+(1-\zeta_d^j)(1-p^-(s_j))\right]
\end{align}
Therefore, we can write the generator $(\mathcal{C},D(\mathcal{C}))$ of the all process $(W_t,S_t,V_t)_{t\geq 0}$ where $D(\mathcal{C}) \subset C_b(E)$
and $\mathcal{C}$ given $\forall f\in D(\mathcal{C})$ :
\begin{align*}
\mathcal{C}f(w,s,v) &= \sum_i\delta_1(v^i)\beta[f(w,s,v-e_i)-f(w,s,v)]\\
&+\sum_i\alpha_i(w,v)\delta_0(v^i)\left(\sum_{\tilde{w} \in G_i^w}(f(\tilde{w},\ s-s_ie_i,\ v+e_i)-f(w,\ s,\ v))\phi^i(s,\tilde{w},w)\right)\\
&+\sum_{i=1}^N\partial_{s_i}f(w,s,v)
\end{align*}
Or
\begin{align*}
\mathcal{C}f(w,s,v) &=\underbrace{\sum_i\delta_1(v^i)\beta[f(w,s,v-e_i)-f(w,s,v)]}_{\mathcal{B}_{\downarrow}f(w,s,v)}\\
&+\underbrace{\sum_i\phi^i(s,w,w)\alpha_i(w,v)\delta_0(v^i)\left(f(w,\ s-s_ie_i,\ v+e_i)-f(w,s,v)\right)}_{\mathcal{B}_{\uparrow}f(w,s,v)}\\
&+\underbrace{\sum_{i=1}^N\partial_{s_i}f(w,s,v)}_{\mathcal{B}_{tr}f(w,s,v)}\\
&+\sum_i\alpha_i(w,v)\delta_0(v^i)\left(\sum_{\tilde{w} \in G_i^w, \tilde{w}\neq w}(f(\tilde{w},\ s-s_ie_i,\ v+e_i)-f(w,\ s,\ v))\phi^i(s,\tilde{w},w)\right)
\end{align*}
Written in this form, the generator shows two different dynamics which are related: the weights dynamic and the network, inter-spiking time dynamics. As we know that synaptic weights dynamics are slow compare to the network dynamics ($(S_t,V_t)_{t>0}$ change fast compare to  $(W_t)_{t>0}$), this means that for all $i$:
\[\sum_{\tilde{w} \in G_i^w, \tilde{w}\neq w}\phi^i(s,\tilde{w},w)\ll\phi^i(s,w,w)\]
Typically, \(\sum_{\tilde{w} \in G_i^w, \tilde{w}\neq w}\phi^i(s,\tilde{w},w)=O(\epsilon)\) 
 and
\( \phi^i(s,w,w)=1-O(\epsilon)\). This time scale difference is studied in section~\ref{slow-fast-sec} while the study of the fast part of the process is done in section~\ref{inv-mes-sec}. This process is given by the generator $\mathcal{B}: D(\mathcal{B})\subset C_b(E)\rightarrow C_b(E)$:
\begin{align}\label{def-B}
\mathcal{B}=\mathcal{B}_{tr}+\mathcal{B}_{\downarrow}+\mathcal{B}_{\uparrow}
\end{align}

\section{Derivation of the weight equation}
\subsection{Invariant measure of the fast processes}\label{inv-mes-sec}

In this section, $W_t=W_0=w\in E_1$ is fixed. We are interested in proving:
\begin{theoreme}\label{theo-inv-mes}
	For all $w\in E_1$, the process $(S_t,V_t)_{t\geq 0}$ with generator $\mathcal{B}_{w}$ mapping $D(\mathcal{B})$ into $ C_b(E_2)$, defined $\forall f\in D(\mathcal{B})$ as:
	\begin{align}\label{def-B_w}
	\mathcal{B}_{w}f(s,v) &=\sum_i\delta_1(v^i)\beta[f(s,v-e_i)-f(s,v)]\\
	&\ \ \ +\sum_i\alpha_i(w,v)\delta_0(v^i)\left(f(s-s_ie_i,\ v+e_i)-f(s,v)\right)\\
	&\ \ \ +\sum_{i=1}^N\partial_{s_i}f(s,v)
	\end{align}
	has a unique invariant measure.
\end{theoreme}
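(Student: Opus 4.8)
The plan is to exploit a structural feature of $\mathcal{B}_w$: with $w$ fixed, the jump rates $\beta$ and $\alpha_i(w,v)=\xi_i\big(\sum_j w^{ji}v^j\big)$ depend on $v$ only, never on $s$. Hence the component $(V_t)_{t\geq 0}$ is itself an autonomous continuous-time Markov chain on the finite set $I=\{0,1\}^N$, with up-rates $\alpha_i(w,\cdot)\in[\alpha_m,\alpha_M]$ and down-rates $\beta$. This chain is irreducible (from any $v$ one can flip any coordinate separately — up at rate $\geq\alpha_m>0$, down at rate $\beta>0$), so it has a unique invariant probability $\pi^w_V$ on $I$ and converges to it geometrically fast. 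Moreover the $s$-coordinate carries no dynamics of its own: $S^i_t$ increases with unit speed and is reset to $0$ exactly when $V^i$ makes a $0\to1$ jump, i.e. $S^i_t$ is the time elapsed since the last firing of neuron $i$. In particular the total jump rate is bounded by $N(\beta+\alpha_M)$, so the process is non-explosive and well defined.

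For existence I would build the invariant law explicitly from a two-sided stationary version $(V_t)_{t\in\reels}$ of the $V$-chain. For each $i$, since $\pi^w_V$ charges states with $v^i=0$ and the up-rate is $\geq\alpha_m$, the point process of $0\to1$ jumps of $V^i$ is a.s.\ bi-infinite; let $\sigma_i=\sup\{t\leq 0:\ V^i\text{ jumps }0\to1\text{ at }t\}\in(-\infty,0]$ a.s., set $S^i_0=-\sigma_i$, and define $\mu_w:=\mathrm{Law}\big((S^i_0)_i,V_0\big)$ on $E_2$. Shifting the two-sided stationary process by any $t>0$ leaves its law unchanged and carries $(S_0,V_0)$ to $(S_t,V_t)$, so $\mu_w$ is invariant for $\mathcal{B}_w$. (Equivalently one checks $\int\mathcal{B}_w f\,d\mu_w=0$ for $f\in D(\mathcal{B})$, or uses the Feller property — transport plus bounded, finitely-valued rates — together with tightness of $\{\mathrm{Law}(S_t,V_t)\}_{t\geq 0}$, each $S^i_t$ being stochastically dominated by a sum of an $\mathrm{Exp}(\beta)$ and an $\mathrm{Exp}(\alpha_m)$ variable, and invokes Krylov--Bogolyubov.)

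For uniqueness I would run a synchronous coupling. Let $\nu_1,\nu_2$ be invariant probabilities. Testing $\mathcal{B}_w$ against functions depending on $v$ only (on which $\mathcal{B}_{tr}$ vanishes and $\mathcal{B}_w$ acts as the $V$-generator) shows their $I$-marginals are both invariant for the autonomous $V$-chain, hence both equal $\pi^w_V$. Drive two copies from $\nu_1$ and $\nu_2$ by the same jump clocks so that their $V$-components, being ergodic finite chains, coalesce at some a.s.\ finite time $T_V$; after $T_V$ the two $V$-paths agree, so their $0\to1$ jump times agree, and hence $S^{1,i}_t=S^{2,i}_t$ for every $t$ past the first common firing of neuron $i$. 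Thus there is an a.s.\ finite $T$ after which the two full trajectories coincide, so $\|\mathrm{Law}(S^1_t,V^1_t)-\mathrm{Law}(S^2_t,V^2_t)\|_{TV}\to 0$; both laws being constant in $t$ by stationarity, they are equal and $\nu_1=\nu_2$.

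The only genuinely delicate point is the unbounded $s$-coordinate, and it is tamed by the observation that $S$ is entirely slaved to — and periodically reset by — the firings of the underlying finite, uniformly ergodic chain $V$. Everything else (non-explosion, the Feller property or martingale identity needed to promote the two-sided construction to true $\mathcal{B}_w$-invariance, the a.s.\ finiteness of $\sigma_i$, of the coalescence time $T_V$, and of the reset times) is routine given the uniform bounds $0<\alpha_m\leq\alpha_i\leq\alpha_M$ and $\beta>0$.
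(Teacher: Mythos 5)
Your proposal is correct, but it takes a genuinely different route from the paper on both halves of the statement. For existence the paper does not construct the invariant law: it checks that $V(s,v)=s_1+\dots+s_N$ is a Lyapunov function for the skeleton $P_T$ (using that $N^i_T\geq 2$ forces at least one $0\to1$ jump, hence a reset of $S^i$), proves the Feller property via Theorem 27.6 of Davis, invokes a Krylov--Bogolyubov-type theorem for $P_T$, and transfers back to continuous time by time-averaging. Your two-sided stationary construction is more explicit and arguably more informative --- it identifies $S^i$ under the invariant law as the backward recurrence time of neuron $i$'s firing process in the stationary regime --- while the Krylov--Bogolyubov alternative you mention parenthetically is essentially the paper's route. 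For uniqueness the divergence is larger: the paper shows that the Laplace transform of any invariant measure satisfies a hierarchy of linear systems whose matrices are strictly diagonally dominant, hence uniquely solvable, whereas your coupling argument replaces several pages of computation with a short probabilistic one and even yields total-variation convergence to equilibrium, which the paper never claims. What the heavier method buys is the explicit system for the $\mathcal{L}(\pi^w_k)$, and that is not a disposable by-product: it is reused in Proposition~\ref{prop-LT}, in the bounds on the jump rates $r^{\pm}_{ij}(w)$ in Section 4, and in the two-neuron computations of the appendix, so the paper could not simply drop it. One small caution on your coupling: driving the two copies \emph{by the same jump clocks} does not by itself force coalescence of the $V$-components (a synchronous flip of a coordinate on which the copies disagree preserves the disagreement); run them independently until the product chain --- irreducible on $I\times I$, with no periodicity issue in continuous time --- hits the diagonal, and only then glue them. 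With that standard fix your argument is complete.
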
  

\noindent This aim enters in a bigger ambition to analyse the total process $(W_t,S_t,V_t)_{t\geq 0}$ on two different time scales. Indeed, in the limit where the plasticity is infinitely slow, it stays constant so $\phi_i(s,w,w)= 1$, and then for all $f\in D(\mathcal{B}_w)$, $\mathcal{B}_{w}f(s,v)=\mathcal{B}f(w,s,v)$. This analysis enables us to show in section~\ref{slow-fast-sec} that, on the slow time scale of plasticity, $(W_t)_{t\geq 0}$ behaves simply against the invariant measure of $(S_t,V_t)^w_{t\geq 0}$. In the following, we omit the dependence on $w$ in the notation of processes only and we use $(S_t,V_t)_{t\geq 0}$ instead of $(S_t,V_t)^w_{t\geq 0}$.

In a first subsection we show existence of an invariant measure of the process $(S_t,V_t)_{t\geq 0}$ and then its uniqueness in the next subsection. We start with some notations.

\subsubsection*{Notations}

Let $X_t=(S_t,V_t)$ with $S_t \in \reels_+^N$ and $V_t \in I=\{0,1\}^N$. The process is then the same as the one defined before with a fixed matrix of weights $w$. Each $X_t^i=(S_t^i,V_t^i)\in \reels_+\times \{0,1\}$, for $i\in [\![1,N]\!]$, follows the same kind of process: the discrete variable $V_t$ jumps with a total rate $\sum_j\left(\alpha_j(w,v)\delta_0(v^j)+\beta\delta_1(v^j)\right)$ when $V_t=v$. Between these jumps, the continuous part $S_t$ will grow linearly with a slope of 1 ($\frac{dS_t}{dt}=1$) except when $V_t^i$ jumps from 0 to 1 at time $t_0$, then the continuous part restarts from 0, i.e. $S_{t_0}^i=0$, see $Figure~\ref{graph-process}$.

\begin{figure}[h!]
	\centering
	\includegraphics[scale=0.5]{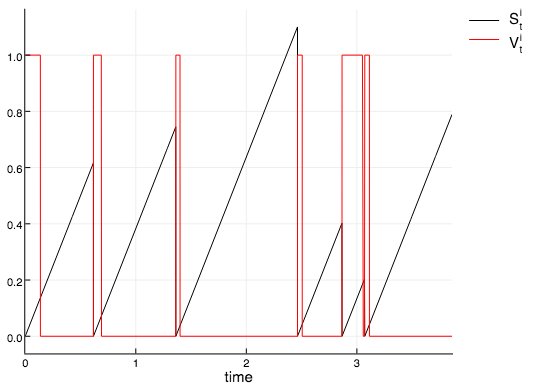}
	\caption{Graph representing the $i_{th}$ coordinates of the processes $S_t$ and $V_t$}\label{graph-process}
\end{figure}

From these notations, one can denote by $(N_t)_{t\geq 0}$ the counting process corresponding to the number of jump of the process $(V_t)_{t\geq 0}$. We can then define the processes $N_t =\sum_{i=1}^{N} N^i_t$ where $(N^i_t)_{t\geq 0}$ are counters of the number of jumps of neuron i. By definition of $\alpha_i$ , one has $N^i_t = Y_i\left(\int_{0}^{t}\alpha_i(w,V_s)ds\right)$ where $Y_i$ are independent Poisson processes of intensity 1, as in~\cite{kang_separation_2013}. Finally, we call $(P_t)_{t\geq 0}$ the transition probability of the process, $P_t$ maps $E_2\times\mathcal{B}(E_2)$ in $\reels_+$. Hence, for all $x\in E_2,\ A\in\mathcal{B}(E_2)$($\sigma$-algebra of Borel sets of $E_2$), $P_t(x,A)$ is the probability that $X_t\in A$ knowing $X_0 = x$, probability also written as $\proba_x(X_t\in A)$.
\newpage
\subsubsection{Existence using a Lyapounov function}

In this subsection, we aim at proving the following theorem:
\begin{proposition}\label{theo-inv-mes-exist}
	The process $(S_t,V_t)_{t\geq 0}$ defined in Theorem~\ref{theo-inv-mes} has at least one invariant measure of probability.
\end{proposition}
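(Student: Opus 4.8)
The plan is to prove a Foster–Lyapunov drift inequality for $\mathcal{B}_w$ and then conclude by a Krylov–Bogolyubov tightness argument. The only source of non-compactness in $E_2=\reels_+^N\times I$ is the age vector $S_t$ (the state space $I$ being finite), so everything rests on choosing a function $G\geq 0$ with precompact sublevel sets whose $\mathcal{B}_w$-drift is negative for large $\|s\|$. The naive candidate $G(s,v)=\sum_i s_i$ does not work: since $\mathcal{B}_w\big(\sum_i s_i\big)(s,v)=N-\sum_{i}\alpha_i(w,v)\delta_0(v^i)s_i$, on the set where every neuron is spiking (or, more generally, where the large-age coordinates all correspond to neurons in state $1$) this drift equals $N>0$ no matter how large $s$ is, because an age $s_i$ with $v^i=1$ keeps growing at rate $1$ and is not reset until after the forced $1\to 0$ transition. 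The remedy is to weight the age of a spiking neuron more heavily, so as to exploit the compensator of the rate-$\beta$ transition $1\to 0$:
\[
G(s,v)=\sum_{i=1}^{N}s_i\bigl(1+\rho\,\delta_1(v^i)\bigr),\qquad \rho>0\text{ fixed.}
\]
This $G$ is continuous, nonnegative, and $G(s,v)\geq\sum_i s_i$, so its sublevel sets are compact in $E_2$.

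Next I compute the drift. The transport part contributes $\sum_i\bigl(1+\rho\,\delta_1(v^i)\bigr)$; each transition $1\to 0$ at a site $i$ with $v^i=1$ (rate $\beta$) decreases $G$ by $\rho s_i$; each transition $0\to 1$ at a site $i$ with $v^i=0$ (rate $\alpha_i(w,v)$) resets $s_i$ to $0$ and hence decreases $G$ by $s_i$ (the $\rho$-term contributing $0$ since the new age is $0$). Therefore
\[
\mathcal{B}_w G(s,v)=\sum_{i}\bigl(1+\rho\,\delta_1(v^i)\bigr)-\beta\rho\sum_{i}\delta_1(v^i)\,s_i-\sum_{i}\alpha_i(w,v)\,\delta_0(v^i)\,s_i\ \leq\ N(1+\rho)-\min(\beta\rho,\alpha_m)\sum_{i=1}^{N}s_i ,
\]
using $\alpha_i\geq\alpha_m$, $\delta_0(v^i)+\delta_1(v^i)=1$ and $\delta_1\le 1$. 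Since $G(s,v)\le(1+\rho)\sum_i s_i$, this yields the geometric drift condition $\mathcal{B}_w G\le b-cG$ on $E_2$, with $b=N(1+\rho)>0$ and $c=\min(\beta\rho,\alpha_m)/(1+\rho)>0$.

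The rest is routine. Total jump rates are bounded by $N(\alpha_M+\beta)$, so $(S_t,V_t)_{t\geq 0}$ is non-explosive; applying Dynkin's formula and Grönwall's lemma to the drift inequality gives $\esp_x[G(S_t,V_t)]\le G(x)e^{-ct}+(b/c)(1-e^{-ct})\le G(x)+b/c$ for all $t\ge 0$. Compactness of the sublevel sets of $G$ then makes $\{\mathrm{Law}(S_t,V_t)\}_{t\ge 0}$ tight, hence so is the family of time-averaged laws $\mu_T(\cdot)=\frac1T\int_0^T P_t(x,\cdot)\,dt$; by Prokhorov a subsequence converges weakly, and since the process is Feller (bounded rates, affine flow) and $\mu_T$ is stationary up to an $O(1/T)$ boundary term, any weak limit is an invariant probability measure.

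The genuinely delicate points are two. Conceptually, it is the choice of the $v$-dependent weighting in $G$: without the extra $\rho\,\delta_1(v^i)$ the drift condition fails on a non-compact set. Technically, it is that $G$ is unbounded, so $G\notin D(\mathcal{B})\subset C_b(E_2)$ and one cannot literally feed it to $\mathcal{B}_w$; I would handle this by running the argument with the truncations $G\wedge n$ — for which the same elementary computation still gives $\mathcal{B}_w(G\wedge n)\le b$, together with a negative drift off the compact set $\{G\le n\}$ — and letting $n\to\infty$ via Fatou's lemma, or equivalently by invoking the explicit form of the extended generator of a piecewise-deterministic Markov process. The remaining ingredients (non-explosion, the Feller property, and the Krylov–Bogolyubov step) are standard.
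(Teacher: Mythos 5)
Your proof is correct, but it takes a genuinely different route from the paper's. The paper applies the Lyapunov criterion to the discrete skeleton $P_T$ with the plain function $V(s,v)=\sum_i s_i$: the drift there is obtained by the soft pathwise bound $V(X_T)\le V(x)+NT$ on the event that some neuron has jumped fewer than twice in $[0,T]$ and $V(X_T)\le NT$ otherwise, giving $\esp_x V(X_T)\le NT+\gamma V(x)$ with $\gamma=\proba_x(\exists i:\ N_T^i<2)$; invariance for $P_T$ is then upgraded to invariance for the semigroup by time-averaging (Proposition~\ref{prop-Lyapunov}), with the Feller property supplied by Proposition~\ref{prop-feller}. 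You instead work at the level of the infinitesimal generator, and your observation that $\sum_i s_i$ fails there --- drift $+N$ on the non-compact set where the large ages sit on neurons in state $1$ --- is exactly the obstruction the skeleton argument silently circumvents (over a window of length $T$ a neuron in state $1$ relaxes and respikes with probability bounded below, which is also what one must check to see that the paper's $\gamma$ is bounded away from $1$ uniformly in $x$). Your weighted function $G(s,v)=\sum_i s_i(1+\rho\,\delta_1(v^i))$ repairs this, and the computation $\mathcal{B}_wG\le N(1+\rho)-\min(\beta\rho,\alpha_m)\sum_i s_i\le b-cG$ is correct. What your route buys is a quantitative, uniform-in-time moment bound $\esp_x[G(X_t)]\le G(x)e^{-ct}+b/c$ and hence tightness directly at the semigroup level; the price is the extra care you rightly flag because $G\notin C_b(E_2)$ (truncation plus Fatou, or Davis' extended generator --- note that both jump types only decrease $G$, so this goes through). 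What the paper's route buys is that it never leaves bounded quantities of the skeleton chain. Both arguments still need the Feller property to pass to the Krylov--Bogolyubov limit; you assert it from bounded rates and the affine flow, whereas the paper proves it via Theorem 27.6 of Davis with the metric \eqref{rho-def}, so for a self-contained write-up you should either cite that result or reproduce that verification.
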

To do so, we use the following theorem, classical in theory of discrete Markov chains on any state space:
\begin{theoreme}\label{theo-Lyapounov}
	If a transition probability P is Feller and admits a Lyapunov function, then it also has an invariant probability measure.
\end{theoreme}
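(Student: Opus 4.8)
My plan is to prove Theorem~\ref{theo-Lyapounov} by the classical Krylov–Bogolyubov (Foster–Lyapunov) construction, adapted to the non-compact space $E_2=\reels_+^N\times I$ through the Lyapunov drift condition. First I would make precise what is meant by a Lyapunov function for $P$: a measurable $V\colon E_2\to[0,\infty)$, finite at some point $x_0$, with relatively compact sublevel sets $\{V\le R\}$ for every $R>0$, and satisfying a drift inequality $PV\le\gamma V+K$ with $0<\gamma<1$ and $K<\infty$ (the variant $PV\le V-1+K\,\mathbbm{1}_{C}$ with $C$ compact is handled in the same way). Fixing such an $x_0$, I would introduce the Cesàro occupation measures $\mu_n=\frac1n\sum_{k=0}^{n-1}\delta_{x_0}P^k$; for the continuous-time kernel $(P_t)$ of the excerpt one uses instead $\frac1n\int_0^nP_s(x_0,\cdot)\,ds$, and the steps below are identical.

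The core of the argument is a tightness estimate. Iterating the drift inequality gives $P^kV(x_0)\le\gamma^kV(x_0)+K/(1-\gamma)$, hence $\int V\,d\mu_n\le \frac{V(x_0)}{n(1-\gamma)}+\frac{K}{1-\gamma}=:C(x_0)<\infty$ uniformly in $n$. Markov's inequality then yields $\mu_n(\{V>R\})\le C(x_0)/R$ for all $R>0$, and since $\{V\le R\}$ is relatively compact this makes $\{\mu_n\}_{n\ge1}$ tight, so by Prokhorov's theorem a subsequence $\mu_{n_j}$ converges weakly to a probability measure $\pi$ on $E_2$. To see $\pi$ is invariant I would use the Feller property: for $f\in C_b(E_2)$ one has $Pf\in C_b(E_2)$, and the telescoping identity
\[
\mu_n(Pf)-\mu_n(f)=\tfrac1n\big(\delta_{x_0}P^n(f)-f(x_0)\big),\qquad \big|\mu_n(Pf)-\mu_n(f)\big|\le \tfrac{2\|f\|_\infty}{n}\xrightarrow[n\to\infty]{}0 ,
\]
lets me pass to the limit along $n_j$ (weak convergence tested against the bounded continuous functions $f$ and $Pf$) to obtain $\pi(Pf)=\pi(f)$ for all $f\in C_b(E_2)$. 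Since $E_2$ is Polish, this forces $\pi P=\pi$, i.e. $\pi$ is an invariant probability measure.

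The hard part is not in this skeleton but in the hypotheses. The essential ingredient is the relative compactness of the sublevel sets of $V$: that is precisely what upgrades the uniform bound on $\int V\,d\mu_n$ to tightness, and checking it for a concrete candidate $V$ on $\reels_+^N\times I$ will be the real work when this theorem is applied. A second, more technical, difficulty arises if one argues directly with the continuous-time process: the infinitesimal drift condition has to be turned into the integrated estimate $\esp_{x_0}[V(X_t)]\le e^{-ct}V(x_0)+\mathrm{const}$, which requires localizing at the exit times $\tau_R$ of $\{V\le R\}$, applying Dynkin's formula on $[0,t\wedge\tau_R]$, and then letting $R\to\infty$ via Fatou's lemma, since $V\notin D(\mathcal{B})\subset C_b(E)$ in general. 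Everything else — Markov's inequality, Prokhorov's theorem, the telescoping sum — is routine.
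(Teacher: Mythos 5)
Your proposal is correct and is precisely the Krylov--Bogolyubov argument (Ces\`aro occupation measures, uniform moment bound from the drift inequality, tightness via the compact sublevel sets, Prokhorov, and the Feller property to pass to the limit) that the paper's ``proof'' merely cites from Hairer's lecture notes and Tweedie's theorem without writing it out. You have simply supplied the details the paper omits, and the remarks on the continuous-time reduction concern the application rather than the theorem itself, which the paper handles separately via Proposition~\ref{prop-Lyapunov}.
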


\begin{proof}
	A nice proof of this result can be found in the course of Martin Hairer called \textit{Ergodic Properties of Markov Processes}. {\color{black} See theorem 2 of~\cite{tweedie_invariant_1988}. Just need to show condition ($F_1$) is equivalent to our Lyapunov condition}.
\end{proof} 

After recalling the definitions of a Lyapunov function and a Feller process,
we find such a Lyapunov function for our process.

\begin{definition}\label{def-Lyapounov}
	Let X be a complete separable metric space and let P be a transition probability on X . A Borel measurable function $V:X\mapsto \reels_+\cup\{\infty\}$ is called a Lyapunov function for P if it satisfies the following conditions:
	\begin{itemize}
		\item [-] $V^{-1}(\reels_+)\neq \emptyset$, in other words there are some values of $x$ for which $V(x)$ is finite.
		\item [-] For every $c\in \reels_+$, the set $V^{-1}(\{x \leq c\})$ is compact.
		\item [-] There exists a positive constant $\gamma$ < 1 and a constant $C$ such that for every x such that $V(x)\neq +\infty$:
		\[\int_XV(y)P(x,dy) \leq \gamma V(x) + C\]
	\end{itemize}
\end{definition}

\begin{definition}
	We say that a homogeneous Markov process with transition operator P is Feller if Pf is continuous whenever f is continuous and bounded. It is strong Feller if Pf is continuous whenever f is measurable and bounded.
\end{definition}
We emphasize that previous definitions and theorem are given for Markov chains and not processes. The following proposition links them.
\begin{proposition}\label{prop-Lyapunov}
	Let $(P_t)_{t\geq 0}$ be a Markov semigroup over X and let $P = P_T$ for some fixed $T > 0$. Then, if $\mu$ is invariant for P, the measure $\overset{\wedge}{\mu}$ defined by:
	\[\overset{\wedge}{\mu}(A) = \frac{1}{T} \int_0^T P_t \mu(A)dt, \ \ \ \ \forall\ A\in \mathcal{B}(E_2) \]
	is invariant for $(P_t)_{t\geq 0}$.
\end{proposition}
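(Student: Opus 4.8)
The plan is to check directly that $\widehat\mu$ is left invariant by every $P_s$, $s\geq 0$, using only the Chapman--Kolmogorov identity $P_s(P_t\mu)=P_{s+t}\mu$ together with the hypothesis $P_T\mu=\mu$. First I would record that $\widehat\mu$ is a genuine probability measure: for each $A\in\mathcal{B}(E_2)$ the map $t\mapsto(P_t\mu)(A)$ is measurable (a standing regularity assumption on the semigroup, which holds e.g.\ whenever it is right-continuous, in particular in the Feller case) and takes values in $[0,1]$, so the time average $\tfrac1T\int_0^T(P_t\mu)(A)\,dt$ is well defined, lies in $[0,1]$, equals $1$ when $A=E_2$, and is $\sigma$-additive by monotone convergence. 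Joint measurability of $(t,x)\mapsto P_t(x,A)$ also lets Fubini's theorem be invoked in the computation below.

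Next, fix $s\in[0,T]$ and $A\in\mathcal{B}(E_2)$. Applying the definition of $\widehat\mu$, Fubini's theorem, and the semigroup property yields
\[
(P_s\widehat\mu)(A)=\int_{E_2}P_s(x,A)\,\widehat\mu(dx)=\frac1T\int_0^T\big(P_s(P_t\mu)\big)(A)\,dt=\frac1T\int_0^T(P_{s+t}\mu)(A)\,dt=\frac1T\int_s^{s+T}(P_u\mu)(A)\,du .
\]
I would then split the last integral at $T$ and substitute $u=v+T$ on $[T,s+T]$; since $P_T\mu=\mu$ by hypothesis, $P_v(P_T\mu)=P_v\mu$, so that piece equals $\tfrac1T\int_0^s(P_v\mu)(A)\,dv$, which recombines with $\tfrac1T\int_s^T(P_u\mu)(A)\,du$ into $\tfrac1T\int_0^T(P_u\mu)(A)\,du=\widehat\mu(A)$. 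Hence $P_s\widehat\mu=\widehat\mu$ for every $s\in[0,T]$, and in particular $P_T\widehat\mu=\widehat\mu$.

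Finally I would bootstrap to arbitrary $s\geq 0$: writing $s=nT+r$ with $n\in\nat$ and $r\in[0,T)$, the semigroup property gives $P_s\widehat\mu=P_r\big(P_T^{\,n}\widehat\mu\big)=P_r\widehat\mu=\widehat\mu$, using $P_T\widehat\mu=\widehat\mu$ iterated $n$ times and then $r\in[0,T]$. This shows $\widehat\mu$ is invariant for $(P_t)_{t\geq 0}$. I do not anticipate a real difficulty here; the only points needing a little care are the measurability of $t\mapsto(P_t\mu)(A)$ and the interchange of $P_s$ with the time average via Fubini, both routine under the usual measurability hypotheses on the semigroup.
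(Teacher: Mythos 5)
Your proof is correct and follows essentially the same route as the paper's: apply $P_s$ to $\widehat\mu$, use the semigroup property to shift the time integral to $[s,s+T]$, split at $T$, and use $P_T\mu=\mu$ to fold the tail back to $[0,s]$. The extra care you take with measurability, Fubini, and the extension from $s\in[0,T]$ to arbitrary $s\geq 0$ is a welcome tightening of details the paper leaves implicit, but the core computation is identical.
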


\begin{proof}
	\begin{align*}
	P_t\overset{\wedge}{\mu} &= P_t\left(\frac{1}{T} \int_0^T P_s \mu\ ds\right) = \frac{1}{T} \int_0^T P_tP_s \mu\ ds = \frac{1}{T} \int_0^T P_{t+s}\ \mu\ ds\\
	&= \frac{1}{T} \int_t^{T+t} P_{s}\ \mu\ ds= \frac{1}{T} \left(\int_t^{T} P_{s}\ \mu\ ds + \int_T^{T+t} P_{s}\ \mu\ ds\right)\\
	&= \frac{1}{T} \left(\int_t^{T} P_{s}\ \mu\ ds + \int_0^{t} P_sP_T\ \mu\ ds\right)=\frac{1}{T} \int_0^T P_s \mu\ ds =\overset{\wedge}{\mu}
	\end{align*}
\end{proof}

Hence, we want to apply theorem~\ref{theo-Lyapounov} to the transition probability $P_T$ extracted from $(P_t)_{t\geq 0}$ for some fixed $T > 0$. 
To do so, we show that for $T>0$ any given time, $V$ defined as $V(x) = s_1+s_2+...s_N\ \ \forall\ x = (s,v)\in E_2$ is a Lyapunov function for $P_T$. Then we use theorem 27.6 of the Davis' book~\cite{davis_markov_1993} to prove $P_T$ is Feller. We conclude on the existence of the invariant measure of probability for $P_T$ and thus for $(P_t)_{t\geq 0}$ thanks to proposition~\ref{prop-Lyapunov}.
\\

After these definitions and notations, let's prove the process $(X_t)_{t\geq 0}$ has at least one invariant measure $\pi$, i.e. $X_0\sim \pi\ \Rightarrow\ \forall \ t\geq 0, X_t\sim \pi$ or more formally, $\forall A \in \mathcal{B}(E_2)$:
\begin{align}\label{def_pi}
\int_{E_2}P_t(x,A)\pi(dx)=\pi(A)
\end{align}

\subsection*{Existence}
\begin{assumption}\label{ass-jump-rate}
	$\exists\ \alpha_m,\ \alpha_M\in\reels_+$ such that $ \forall\ v\in I,\ w\in E_1$:\\
	\[ 0<\alpha_m\leq\alpha_i(w,v){\color{black},\beta}\leq \alpha_M<\infty\]
\end{assumption}

\begin{proposition}\label{prop-lyap}
	 With assumption~\ref{ass-jump-rate}, for any $T>0$, $V(x)=s_1+...+s_N$ is Lyapunov for $P_T$ with constants $C=NT$ and $\gamma=
	\mathbb{P}_x(\exists i:\ N_T^i <2)<1$, $\forall\ x\in E_2$.
\end{proposition}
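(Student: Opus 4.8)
The goal is to verify the three conditions of Definition~\ref{def-Lyapounov} for $V(x) = s_1 + \dots + s_N$ and the transition kernel $P_T$. The first two are immediate: $V$ is finite everywhere on $E_2$, and the sublevel set $\{V \le c\}$ equals $\{(s,v) : \sum_i s_i \le c\}$, which is a closed bounded subset of $\reels_+^N$ times the finite set $I$, hence compact. So the whole content is the drift inequality $\esp_x[V(X_T)] \le \gamma V(x) + C$ with the claimed constants.

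First I would write $\esp_x[V(X_T)] = \sum_{i=1}^N \esp_x[S_T^i]$ and analyse each coordinate separately. For a fixed coordinate $i$, starting from $S_0^i = s_i$, the process $S_t^i$ increases with slope $1$ and is reset to $0$ exactly at the spike times of neuron $i$ (the jumps $0 \to 1$ of $V^i$). The key observation is a dichotomy over the event depending on whether neuron $i$ spikes at least twice in $[0,T]$: on the event $\{N_T^i \ge 2\}$ we have $S_T^i \le T$ (since the last reset occurred within the interval, $S_T^i$ is at most the elapsed time since then, which is $\le T$); on the complementary event $\{N_T^i < 2\}$ we bound crudely $S_T^i \le s_i + T$. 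Hence
\begin{align*}
\esp_x[S_T^i] &\le \esp_x\!\left[(s_i + T)\mathbbm{1}_{\{N_T^i < 2\}}\right] + \esp_x\!\left[T\,\mathbbm{1}_{\{N_T^i \ge 2\}}\right]\\
&\le s_i\,\proba_x(N_T^i < 2) + T.
\end{align*}
Summing over $i$ gives $\esp_x[V(X_T)] \le \sum_i s_i\,\proba_x(N_T^i < 2) + NT \le \big(\max_i \proba_x(N_T^i < 2)\big)V(x) + NT$, and since $\{\exists i : N_T^i < 2\} \supseteq \{N_T^j < 2\}$ for each $j$, we get $\max_i \proba_x(N_T^i < 2) \le \proba_x(\exists i : N_T^i < 2) =: \gamma$, yielding exactly $C = NT$ and the stated $\gamma$.

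The remaining point — and the only genuinely non-trivial one — is that $\gamma < 1$, uniformly is not claimed here (the proposition states $\gamma = \proba_x(\exists i : N_T^i < 2) < 1$ for each $x$, which is what Definition~\ref{def-Lyapounov} literally requires, a constant depending only on $P_T$ once we note it can be taken as $\sup_x$; but strictly the statement as written allows $x$-dependence). To see $\proba_x(\exists i : N_T^i < 2) < 1$, equivalently $\proba_x(\forall i : N_T^i \ge 2) > 0$: using Assumption~\ref{ass-jump-rate}, the rate of every transition (both $0\to 1$ and $1 \to 0$) is bounded below by $\alpha_m > 0$ and above by $\alpha_M < \infty$, so with positive probability every neuron can be made to follow a trajectory $0\to1\to0\to1$ (or $1\to0\to1\to0$) within $[0,T]$, which forces $N_T^i \ge 2$; since the driving Poisson clocks $Y_i$ are independent and each such coordinate-wise event has positive probability (a finite number of jumps in a fixed time window, each occurring at rate in $[\alpha_m,\alpha_M]$), the intersection over the finitely many $i$ has positive probability. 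I expect this positivity argument to be the main obstacle to state cleanly, mostly because one must be slightly careful that the jump rates $\alpha_i(w,V_t)$ depend on the global state $V_t$, so the coordinates are not independent; the clean way around this is to lower-bound the probability of the explicit event "the first $2N$ jumps of the whole system are, in order, spikes and resets cycling each neuron twice," whose probability is bounded below using only $\alpha_m$, $\alpha_M$, $N$ and $T$.
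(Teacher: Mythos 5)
Your proof is correct and follows essentially the same route as the paper: the dichotomy on the event $\{N_T^i \ge 2\}$ (which, because the neuron is binary, forces at least one $0\to 1$ jump and hence a reset of $S^i$ in $[0,T]$, so $S_T^i \le T$) versus its complement (where $S_T^i \le s_i + T$), yielding $\esp_x V(X_T) \le \gamma V(x) + NT$ with $\gamma = \proba_x(\exists i :\ N_T^i < 2)$; the paper performs this decomposition globally on the single event $\{\exists i:\ N_T^i<2\}$ rather than coordinate-by-coordinate, but the constants obtained are identical. Your concluding discussion of why $\gamma<1$ (and your remark that the Lyapunov constant should properly be $\sup_x \proba_x(\exists i:\ N_T^i<2)$, which is a maximum over the finitely many $v\in I$ since the law of the jump counts does not depend on $s$) is a genuine addition, as the paper asserts $\gamma<1$ without justification.
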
 

\begin{proof}
	The main idea is to use the fact that $S^i_t$ values return to 0 whenever neuron $i$ jumps from 0 to 1. Hence, as neurons have only two states, if $N_T^i\geq 2$, neuron $i$ has jumped at least one time from 0 to 1 between \(0\) and $T$. Therefore, decomposing possible events we get:
	\[
	V(X_T) \leq (V(x) + NT) \mathbbm{1}_{\{\exists i N^i_T < 2  \}} + NT
	\mathbbm{1}_{\{\forall i N^i_T \geq 2  \}}  
	\]
	So
	\[
	\mathbb{E}_xV(X_T) \leq NT + V(x) \underbrace{\mathbb{P}_x(\exists i:\ N_T^i <2)}_{<1}
	\]
\end{proof}

Furthermore, one can show the process $(S_t,V_t)$ is Feller thanks to 
Davis' book~\cite{davis_markov_1993}:
\begin{proposition}\label{prop-feller}
	$(S_t,V_t)$ is Feller.
\end{proposition}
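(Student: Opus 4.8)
The plan is to recognise $(S_t,V_t)$, for the fixed weight matrix $w$, as a piecewise deterministic Markov process (PDMP) in the sense of~\cite{davis_markov_1993} and then to invoke Theorem~27.6 of that book, which asserts that the semigroup of such a process is Feller once its three local characteristics are suitably regular. Here these characteristics are transparent: the deterministic flow translates every coordinate $S^i$ at unit speed while freezing $v$; the jump intensity is $\lambda(s,v)=\sum_i\bigl(\alpha_i(w,v)\delta_0(v^i)+\beta\,\delta_1(v^i)\bigr)$; and, at a jump, the post-jump kernel $Q\bigl((s,v),\cdot\bigr)$ selects a neuron $i$ with probability proportional to its instantaneous rate, sending $(s,v)$ to $(s-s_ie_i,\,v+e_i)$ when $v^i=0$ (a reset of coordinate $i$) and to $(s,\,v-e_i)$ when $v^i=1$.

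I would then check Davis' hypotheses one by one. The flow is the translation $(s,v)\mapsto(s+t\mathbbm{1},v)$, hence globally Lipschitz, and — crucially — it never leaves $E_2=\reels_+^N\times I$, so the active boundary on which a PDMP's flow would exit the state space is empty and no boundary jump measure is needed; checking that the closedness of $\reels_+^N$ and the resets to $\{s_i=0\}$ cause no trouble in Davis' framework is the point I expect to require the most care. For the rate I would observe that once $w$ is fixed $\lambda$ depends only on the discrete variable $v$, so it is automatically continuous on $E_2$ (with $I$ discrete) and, by Assumption~\ref{ass-jump-rate}, bounded with $\alpha_m\le\lambda\le N\alpha_M$; the same remark makes $Q$ Feller, since for $f\in C_b(E_2)$ the map $(s,v)\mapsto\int f\,dQ\bigl((s,v),\cdot\bigr)$ is a finite sum of terms $\lambda(s,v)^{-1}\alpha_i(w,v)\delta_0(v^i)f(s-s_ie_i,v+e_i)$ and $\lambda(s,v)^{-1}\beta\,\delta_1(v^i)f(s,v-e_i)$, each continuous in $s$ and, the $v$-component being discrete, continuous on $E_2$. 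Non-explosion is immediate from $\lambda\le N\alpha_M$, which dominates the jump counter $N_t$ by a Poisson process of intensity $N\alpha_M$, so $\esp_x[N_t]<\infty$ for all $t$. Theorem~27.6 of~\cite{davis_markov_1993} then yields that $(P_t)_{t\ge0}$ maps $C_b(E_2)$ into itself, i.e.\ $(S_t,V_t)$ is Feller; in particular $P_T$ is Feller, which is what is needed above to produce the invariant measure.

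A shorter, self-contained alternative I would keep in reserve is a synchronous coupling. Starting two copies from $x=(s,v)$ and $x'=(s',v)$ with the same discrete part — legitimate since $I$ is discrete, so $x'\to x$ forces $v'=v$ eventually — and driving both with the same family of Poisson clocks, the rates (depending only on $v$) make the two copies undergo exactly the same $v$-transitions at the same times; hence $V_t\equiv V_t'$, while each $S^i$-coordinate satisfies $S_t^i=S_t'^i$ once neuron $i$ has fired and $S_t^i-S_t'^i=s_i-s_i'$ before that, so $\lVert S_t-S_t'\rVert_\infty\le\lVert s-s'\rVert_\infty$ almost surely. Then $|P_tf(x)-P_tf(x')|\le\esp\bigl[|f(X_t)-f(X_t')|\bigr]\to0$ as $x'\to x$ by dominated convergence and continuity of $f$, which proves the Feller property directly and even furnishes an explicit modulus of continuity for $P_tf$ inherited from $f$.
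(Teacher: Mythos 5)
Your main argument is essentially the paper's own proof: both invoke Theorem~27.6 of Davis' book after checking that the flow never reaches an active boundary (so $t_*\equiv+\infty$), that the total rate $\lambda$ depends only on the discrete component $v$ and is therefore bounded and continuous, and that the post-jump kernel $Q$ maps bounded continuous functions to continuous functions. The synchronous-coupling alternative you keep in reserve is a nice self-contained extra, but it is not needed and does not change the verdict that your route coincides with the paper's.
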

\begin{proof}
	First, we define a distance $\rho$ such that $(E_2,\rho)$ is a metric space, locally compact. Such a distance is proposed in~\cite{davis_markov_1993} page 58:
	\begin{align}\label{rho-def}
	\begin{split}
	\forall x=(s_x,v_x),\ y=(s_y,v_y)\in E_2:\qquad \rho(x,y)=
	\left\{
	\begin{array}{r c l}
	1\quad \mbox{ if } v_x\neq v_y
	\\
	\dfrac{2}{\pi}\max_{\{1 \leq i \leq N\}} \tan^{-1}(\lvert s_x^i-s_y^i\rvert) \quad \mbox{ if } v_x= v_y
	\end{array}
	\right.
	\end{split}
	\end{align}
	\noindent We need this kind of norm because if we take for instance the euclidean distance $\rho(x,y)=\lVert s_x-s_y\rVert_2$, we can have $\rho(x,y)=0$ and $x\neq y$ as soon as $s_x=s_y$ and $v_x\neq v_y$.
	
	Then, we want to apply theorem 27.6 of~\cite{davis_markov_1993}. We define $t_*(x)$ as
	\[t_*(x)=\{\text{time to hit the boundary of $E_2$ leaving from x and following the flow on s}\}\]
	$t_*(x)=+\infty$ as the only boundary is for $x=(0,v)$ which is never reached because $S_t$ increases toward infinity following the flow.\\ Moreover, we define the total jump rate $\lambda(x)=\sum_j\left(\alpha_j(w,v)\delta_0(v^j)+\beta\delta_1(v^j)\right)=\lambda(v)$. 
	Thus, as $\lambda$ is bounded by assumption~\ref{ass-jump-rate} and it only depends on $v$, as soon as $\rho(x,y)<1$, $v_x=v_y$ so $\lambda(x)=\lambda(y)$, hence $\lambda \in C_b(E)$.\\ 
	Finally, we define $Q$ as  
	\[Q\left(\{((s-\delta_0(v^i)s_ie_i,v+e_i)\},(s,v)\right)=\frac{\alpha_i(w,v)\delta_0(v^i)+\beta\delta_1(v^i)}{\lambda(v)}\]
	and show it is continuous for $f\in D(\mathcal{B}_w)$.
	 Indeed, let $f\in D(\mathcal{B}_w)$, if $\rho(x,y)\leq \eta<1$:
	\begin{align*}
		\left| Qf(x)-Qf(y) \right|&=\left| \sum_if(s_x,v+e_i)\frac{\alpha_i(w,v)\delta_0(v^i)+\beta\delta_1(v^i)}{\lambda(v)}-\sum_if(s_y,v+e_i)\frac{\alpha_i(w,v)\delta_0(v^i)+\beta\delta_1(v^i)}{\lambda(v)} \right|\\ &\leq N\sup_i\left| f(s_x,v+e_i)-f(s_y,v+e_i)\right|
	\end{align*}
	Then, choosing $\eta$ such that $\sup_{v'\in I}|f(s_x,v')-f(s_y,v')|\leq \frac{\epsilon}{N}$ (possible as $f\in D(\mathcal{B}_w)\subset C_b(E_2)$) we have for all $\epsilon>0$, $\exists \eta>0$ such that:
	\begin{align*}
	\rho(x,y)\leq \eta\  \Rightarrow\ |Qf(x)-Qf(y)|\leq \epsilon 
	\end{align*}
	Thus, $x\rightarrow Qf(x)$ is continuous for $f\in D(\mathcal{B}_w)$. We can apply theorem 27.6 of Davis' book~\cite{davis_markov_1993} which ends the proof.
\end{proof}
We can now prove theorem~\ref{theo-inv-mes-exist}:
\begin{proof}
	Proposition~\ref{prop-lyap} and Proposition~\ref{prop-feller} allows to apply  Theorem~\ref{theo-Lyapounov} and thus conclude on the existence of an invariant measure of probability for $(S_t,V_t)$.
\end{proof}
 In  the following, we show that such a measure is unique.
 
\subsubsection{Uniqueness through Laplace transform}
We now want to show this process has a unique invariant measure of probability $\pi$. To do so, we find the possible Laplace transforms of the invariant measures of the process. We prove such Laplace transforms satisfy an equation with a unique solution. By uniqueness of the Laplace transform of a measure, we deduce the result we want.

In the following, we use an equivalent definition of invariant measures which makes use of the generator $(\mathcal{B}_{w},D(\mathcal{B}_{w}))$ of the process, see proposition 34.7 in~\cite{davis_markov_1993}.

\begin{proposition}\label{prop-inv-mes-gen}
	Let $(T_t)_{t\geq 0}$ be a semigroup on $F$, a Banach space,  associated to a Markov process $(X_t)_{t\geq 0}$. We note, $(\mathcal{B}_{w},D(\mathcal{B}_{w}))$ its generator and we assume $D(\mathcal{B}_{w})$ is separating. Then, $\pi$ is an invariant measure if and only if $\forall f\in D(\mathcal{B}_{w})$, \begin{align}\label{def_pi}
		\int_{E_2}\mathcal{B}_{w}fd\pi=0
	\end{align}
\end{proposition}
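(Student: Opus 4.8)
\emph{Plan.} The plan is to prove the two implications separately, using only the defining relation between the semigroup $(T_t)_{t\geq 0}$ and its generator $(\mathcal{B}_w,D(\mathcal{B}_w))$, together with the fact that integration against the probability measure $\pi$ is a bounded linear functional on $F\subset C_b(E_2)$ (indeed $\lvert\int g\,d\pi\rvert\leq\lVert g\rVert_\infty$), so that it commutes with limits and derivatives taken in the norm of $F$. Recall also that, by a monotone‑class argument starting from indicator functions, the statement ``$\pi$ is invariant'' is equivalent to $\int_{E_2}T_tf\,d\pi=\int_{E_2}f\,d\pi$ for every bounded measurable $f$ and every $t\geq 0$.

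\emph{Invariance $\Rightarrow$ generator condition.} Assume $\pi$ is invariant and fix $f\in D(\mathcal{B}_w)$. Then for every $t>0$,
\[
\int_{E_2}\frac{T_tf-f}{t}\,d\pi=\frac1t\left(\int_{E_2}T_tf\,d\pi-\int_{E_2}f\,d\pi\right)=0 .
\]
By definition of the generator on its domain, $\tfrac1t(T_tf-f)\to\mathcal{B}_wf$ in $F$ as $t\downarrow 0$. Applying the bounded linear functional $g\mapsto\int_{E_2}g\,d\pi$ to both sides and passing to the limit gives $\int_{E_2}\mathcal{B}_wf\,d\pi=0$.

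\emph{Generator condition $\Rightarrow$ invariance.} Assume $\int_{E_2}\mathcal{B}_wg\,d\pi=0$ for all $g\in D(\mathcal{B}_w)$. Fix $f\in D(\mathcal{B}_w)$ and set $u(t)=\int_{E_2}T_tf\,d\pi$. Since $T_t$ leaves $D(\mathcal{B}_w)$ invariant and $t\mapsto T_tf$ is differentiable in $F$ with $\tfrac{d}{dt}T_tf=\mathcal{B}_wT_tf=T_t\mathcal{B}_wf$, and since integration against $\pi$ is a bounded linear functional on $F$, the function $u$ is differentiable with
\[
u'(t)=\int_{E_2}\mathcal{B}_w(T_tf)\,d\pi=0 ,
\]
the last equality because $T_tf\in D(\mathcal{B}_w)$. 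Hence $u$ is constant, so $\int_{E_2}T_tf\,d\pi=u(0)=\int_{E_2}f\,d\pi$ for all $t\geq 0$ and all $f\in D(\mathcal{B}_w)$. Because $D(\mathcal{B}_w)$ is separating, the two measures $A\mapsto\int_{E_2}P_t(x,A)\,\pi(dx)$ and $\pi$, which assign the same integral to every $f\in D(\mathcal{B}_w)$, must coincide; that is, $\pi$ is invariant.

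\emph{Main obstacle.} The only genuinely delicate points are the two interchanges of a limit/derivative with the integral, and the reduction of the abstract notion of invariance to testing against $D(\mathcal{B}_w)$. The former is handled uniformly by the observation that $\pi$ is a probability measure, hence integration is continuous on $(F,\lVert\cdot\rVert_\infty)$ and commutes with norm limits of difference quotients; the latter is exactly where the \emph{separating} hypothesis on $D(\mathcal{B}_w)$ is used. This is precisely the content of Proposition~34.7 in~\cite{davis_markov_1993}, whose argument we follow.
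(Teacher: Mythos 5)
Your argument is correct; note that the paper itself offers no proof of this proposition and simply points to Proposition~34.7 of Davis' book, so your write-up is exactly the standard semigroup argument that the paper delegates to that citation (differentiate $t\mapsto\int T_tf\,d\pi$, use $\tfrac{d}{dt}T_tf=\mathcal{B}_wT_tf$ with $T_tf\in D(\mathcal{B}_w)$, and invoke the separating hypothesis to pass from test functions back to measures). The only implicit assumption worth flagging is that you take $F\subset C_b(E_2)$ with the sup norm so that integration against the probability measure $\pi$ is a bounded linear functional commuting with the norm limits; this matches the paper's setting, where $D(\mathcal{B}_w)\subset C_b(E_2)$.
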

We remind us what is a separating class of functions:
\begin{definition}
	A class of functions $\mathcal{D}\in B(E_2)$ (measurable and bounded function on E) is said to be separating if for probability measures $\mu^w_1$ and $\mu^w_2$ on $E_2$, $\mu^w_1 = \mu^w_2$ whenever $\int_{E_2}fd\mu^w_1 = \int_{E_2}fd\mu^w_2$ for all $f \in \mathcal{D}$.
\end{definition} 

In what follows, domains of generators will always be separating as showed in the proposition 34.11 of~\cite{davis_markov_1993}.

\subsection*{Uniqueness}
We invite you to have a look to the appendix~\ref{uniqueness_dim2} to have a better view on the following computations.

\begin{proposition}
	Assume the process $(X_t)_{t\geq 0}$ in dimension N has at least one invariant measure of probability $\pi^w$. Then it is unique.
\end{proposition}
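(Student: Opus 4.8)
The plan is to follow the strategy announced in the preceding paragraph: characterise the Laplace transform of any invariant probability measure $\pi^w$ of $(X_t)_{t\geq 0}=(S_t,V_t)_{t\geq 0}$, show this transform must satisfy a closed system of equations, and prove that system has a unique solution; uniqueness of $\pi^w$ then follows by injectivity of the Laplace transform. Concretely, for $\lambda=(\lambda_1,\dots,\lambda_N)\in\reels_+^N$ and $v\in I$ I would introduce the $2^N$ partial Laplace transforms
\[
L_v(\lambda)=\int_{\reels_+^N}e^{-\langle\lambda,s\rangle}\,\pi^w(ds,\{v\}),
\]
so that $\sum_{v\in I}L_v(0)=1$. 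Using Proposition~\ref{prop-inv-mes-gen}, I plug into $\int_{E_2}\mathcal{B}_w f\,d\pi^w=0$ test functions of the (smooth, bounded) form $f(s,v)=e^{-\langle\lambda,s\rangle}g(v)$, which lie in $D(\mathcal{B}_w)$. The transport term $\mathcal{B}_{tr}$ contributes $-\langle\lambda,s\rangle$ inside the integral, i.e. a factor $-(\lambda_1+\dots+\lambda_N)$ times $e^{-\langle\lambda,s\rangle}$; the $\beta$-term (neuron switching $1\to0$) is linear in the $L_v$'s with constant coefficients; and the $\alpha$-term (neuron $i$ switching $0\to1$, which resets $s_i$ to $0$) produces terms where $L_{v+e_i}$ is evaluated with its $i$-th Laplace variable set to $0$ — this is the only place where the reset of $S^i$ enters. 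Collecting everything, one obtains, for each $v$, a linear equation of the form
\[
\Big(\sum_{i=1}^N\lambda_i+\lambda(v)\Big)L_v(\lambda)=\sum_{i:\,v^i=1}\beta\,L_{v-e_i}(\lambda)+\sum_{i:\,v^i=0}\alpha_i(w,v+e_i)\,L_{v+e_i}(\lambda_1,\dots,\lambda_{i-1},0,\lambda_{i+1},\dots,\lambda_N),
\]
with $\lambda(v)=\sum_j(\alpha_j(w,v)\delta_0(v^j)+\beta\delta_1(v^j))$ bounded away from $0$ and $\infty$ by Assumption~\ref{ass-jump-rate}.

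Next I would argue this system determines the $L_v$ uniquely. The key observation is that the left-hand coefficient $\sum_i\lambda_i+\lambda(v)\geq N\alpha_m>0$ never vanishes, so the map sending the vector $(L_v)_{v\in I}$ to the right-hand side is, for fixed $\lambda$, well defined; moreover each "reset" evaluation lowers the number of active (nonzero) Laplace variables, which suggests an induction. I would stratify $\{1,\dots,N\}$ (or rather the support of $\lambda$) and proceed by induction on $k=\#\{i:\lambda_i>0\}$: when $k=0$, the system reduces to the stationary equations of the finite-state Markov chain on $I$ with rates $\alpha_i,\beta$ — which is irreducible (since all $\alpha_i\geq\alpha_m>0$ and $\beta>0$, every state communicates), hence has a unique stationary distribution, pinning down $(L_v(0))_{v}$ together with the normalisation $\sum_v L_v(0)=1$. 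For $k\geq1$, the right-hand side only ever refers to $L_{v'}$ evaluated either at the same $\lambda$ (the $\beta$-terms and the non-reset structure) or at a $\lambda$ with strictly fewer nonzero coordinates (the $\alpha$-reset terms, which zero out $\lambda_i$); treating the latter as known by the inductive hypothesis, what remains is, for each fixed set of active coordinates, a linear fixed-point equation $L=A_\lambda L+b_\lambda$ on $\reels^{2^N}$ where $A_\lambda$ has spectral radius $<1$ — because its row sums are at most $\lambda(v)/(\sum_i\lambda_i+\lambda(v))<1$ uniformly — so $(I-A_\lambda)$ is invertible and $L$ is uniquely determined. Finally, since two invariant measures $\pi^w_1,\pi^w_2$ would yield the same $L_v$ for all $\lambda\in\reels_+^N$ and all $v$, and a finite measure on $\reels_+^N$ is determined by its Laplace transform, I conclude $\pi^w_1=\pi^w_2$.

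The main obstacle I anticipate is bookkeeping rather than conceptual: carefully justifying that $f(s,v)=e^{-\langle\lambda,s\rangle}g(v)$ (and the constant function $1$, needed for normalisation) genuinely belong to $D(\mathcal{B}_w)$ and are covered by Proposition~\ref{prop-inv-mes-gen}, and making the "reset lowers the number of active variables" induction airtight — in particular checking that the $\alpha$-terms never reintroduce a variable that was set to zero, so the recursion is well-founded, and that the strict inequality on row sums (hence $\rho(A_\lambda)<1$) holds uniformly on the relevant region. A secondary subtlety is the base case $k=0$: one must invoke irreducibility of the finite chain on $\{0,1\}^N$ — guaranteed by $\alpha_m>0$ and $\beta>0$ — plus the normalisation $\sum_{v}L_v(0)=1$ to get a genuinely unique solution there, since the homogeneous stationary system alone only determines $(L_v(0))_v$ up to a scalar. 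The appendix~\ref{uniqueness_dim2} presumably carries out the $N=2$ case explicitly, and I would use it as the template for the general computation.
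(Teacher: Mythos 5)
Your strategy is the same as the paper's: test $\int_{E_2}\mathcal{B}_wf\,d\pi^w=0$ with $f(s,v)=e^{-\vec{\lambda}\cdot\vec{s}}g(v)$, obtain a linear system for the partial Laplace transforms in which the reset of $S^i$ shows up as an evaluation at $\widehat{\lambda}_i$, induct on the number of nonzero coordinates of $\lambda$ down to the base case $\lambda=0$ (the stationary equations of the finite chain $(V_t)_{t\geq 0}$), and conclude by injectivity of the Laplace transform. This is precisely the paper's Step 1/Step 2 scheme. However, two points in your execution do not go through as written.

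First, your assertion that ``each reset evaluation lowers the number of active Laplace variables'' is false: replacing $\lambda$ by $\widehat{\lambda}_i$ strictly reduces the support only when $\lambda_i>0$. Once several coordinates have already been zeroed, most reset terms are evaluated at the \emph{same} $\lambda$ and must remain on the left-hand side; the recursion is not well-founded in the form you state it. The paper deals with exactly this in Step 1 by splitting $\Lambda(\widehat{\lambda}_{k_1\ldots k_d})$ into a genuinely lower-level part and a matrix $\Gamma'$ that is moved back to the left, and then re-establishing diagonal dominance of $\Gamma-\Gamma'$. You flag this as something to check, but it changes the matrix whose invertibility you need, so it is not mere bookkeeping.

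Second, and more seriously, the invertibility argument itself is wrong: you bound the row sums of the iteration matrix by $\lambda(v)/(\sum_i\lambda_i+\lambda(v))$, but the off-diagonal coefficients in the equation for state $v$ are the \emph{inflow} rates into $v$ (a $\beta$ from each $v+e_i$ with $v^i=0$, and $\alpha_i(w,v-e_i)$ from each $v-e_i$ with $v^i=1$), whose sum has no reason to equal the total \emph{outflow} rate $\lambda(v)$ sitting in the diagonal — take $v=(0,\ldots,0)$ with $\beta$ large compared to the $\alpha_i$ and the ratio exceeds $1$. The paper obtains strict row diagonal dominance only after normalising $\pi^w(ds,v)=\sum_k\pi^w_k(ds)\mu^w_k\mathbb{1}_{\vec{v}_k}(v)$ by the invariant law $\mu^w$ of $(V_t)_{t\geq 0}$ and invoking the global balance relation~\eqref{rel-jump}; that weighting is the missing ingredient. (An alternative repair: the unnormalised system matrix is $\lvert\lambda\rvert I-Q^{T}$ up to deleting some negative off-diagonal entries, hence strictly \emph{column}-diagonally dominant because each column of $-Q^{T}$ sums to zero — but that is not the bound you wrote.) A further minor slip: in your displayed system the rates are attached to the wrong transitions — the inflow into $v$ from $v-e_i$ (for $v^i=1$) is the firing $0\to1$, so it carries the rate $\alpha_i(w,v-e_i)$ together with the reset $\widehat{\lambda}_i$, while the inflow from $v+e_i$ (for $v^i=0$) carries $\beta$ and no reset; your formula swaps the two, contradicting your own correct verbal description just above it.
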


\begin{proof}
	Let start with some notations:
	\begin{align}\label{not-lap}
	\begin{split}
		& I = \{0,1\}^N\ \text{and}\ E_2 = \reels_+^N \times I\\
		& \forall (s,v)\in E_2,\ s = (s^1, ..., s^N)\in \reels_+^N\ \text{and}\ v = (v^1, ..., v^N)\in I\\
		& \ e_i = (0,...,0,\underbrace{1}_{i},0,...,0)\\
		& \mathcal{B}_I=(\vec{v}_1, ..., \vec{v}_{2^N})\ \text{an enumeration of}\ I\ \text{s.t.}\ k\geq l \Rightarrow \sum_{i=1}^{N}v_k^i\geq \sum_{i=1}^{N}v_l^i\\
		& |\lambda|=\sum_{i=1}^{2^N} \lambda_i
	\end{split}
	\end{align}
	The jump process alone $(V_t)_{t\geq 0}$ has a unique invariant measure $\mu^w=(\mu^w_1, ..., \mu^w_{2^N})\in \reels_+^{2^N}$. Indeed, as each neuron is connected to each other, $(V_t)_{t\geq 0}$ is irreducible. As its state space is finite, the process is also positive recurrent so it has a unique invariant probability measure $\mu^w$ by theorem1.7.7 in~\cite{norris_markov_1998}. Moreover, as each state is positive recurrent, $\mu^w_v>0,\ \forall v\in I$. In particular, this measure satisfies $\sum_{k=1}^{2^N}\mathcal{B}_0g(\vec{v}_k)\mu^w_k = 0$, where $\mathcal{B}_0$ is the generator of $(V_t)_{t\geq 0}$ and for functions $g$ $I$-measurable:
	\begin{align}\label{def_B_0}
	\mathcal{B}_0g(v) = \mathcal{B}_{w}g(s,v)=\sum_{i=1}^{N}\beta\delta_1(v^i)[g(v-e_i)-g(v)]+\alpha_i(w,v)\delta_0(v^i)\left[g(v+e_i)-g(v)\right]
	\end{align}
	Hence, with $g(v) = \mathbb{1}_{\vec{v}_j}(v)$ we get $\forall j\in [\![1,2^N]\!]$:
	\begin{align}\label{rel-jump}
		\sum_{k=1}^{2^N}\mathcal{B}_0g(\vec{v}_k)\mu^w_k = \sum_{k=1}^{2^N}\mu^w_k\sum_{i=1}^{N}(\beta\delta_1(v_k^i)[\mathbb{1}_{\vec{v}_j}(v_k-e_i)-&\mathbb{1}_{\vec{v}_j}(\vec{v}_k)]+\alpha_i(\vec{v}_k)\delta_0(v_k^i)\left[\mathbb{1}_{\vec{v}_j}(\vec{v}_k+e_i)-\mathbb{1}_{\vec{v}_j}(\vec{v}_k)\right])=0 \nonumber\\
		&\Leftrightarrow\nonumber\\
		\sum_{k=1,k\neq j}^{2^N}\mu^w_k\sum_{i=1}^{N}[\beta\delta_1(v_k^i)\mathbb{1}_{\vec{v}_j}(\vec{v}_k-e_i)+\alpha_i(\vec{v}_k)\delta_0(v_k^i)&\mathbb{1}_{\vec{v}_j}(\vec{v}_k+e_i)]=\mu^w_j\sum_{i=1}^{N}\beta\delta_1(v_j^i)+\alpha_i(\vec{v}_j)\delta_0(v_j^i)
	\end{align}
	
	We can then write the system satisfied by Laplace transforms of invariant probability measures of the process $(S_t,V_t)_{t\geq 0}$. We call $\pi^w$ one of them. First we can decompose $\pi^w$ as: 
	\begin{align}\label{pi-decomp}
		\pi^w(ds,v) = \sum_{k=1}^{2^N}\pi^w_{\vec{v}_k}(ds)\mu^w_k\mathbb{1}_{\vec{v}_k}(v)
	\end{align}
	In what follows, for the sake of simplicity, we note $\pi^w_k$ for $\pi^w_{\vec{v}_k}$.\\
	From proposition~\ref{prop-inv-mes-gen}, $\forall f\in D(\mathcal{B}_{w})$:
	\begin{align}\label{mes-gen-N}
		\sum_{k=1}^{2^N}\int_{s\in \reels_+^N}\mathcal{B}_{w}f(s,\vec{v}_k)\mu^w_k\pi^w_k(ds)=0
	\end{align}
	Where $(\mathcal{B}_{w},D(\mathcal{B}_{w}))$ is the generator of the process $(X_t)_{t\geq 0}$~\eqref{def-B_w} 
	. As we are interested in finding the Laplace transform of $\pi^w$ we take $f(s,v) = e^{-\vec{\lambda}.\vec{s}}g(v)$. First we compute $\mathcal{B}_{w}f$:
	\begin{align}\label{gen-N}
	\mathcal{B}_{w}f(s,v) =&\sum_{i=1}^N\beta\delta_1(v^i)[e^{-\vec{\lambda}.\vec{s}}g(v-e_i)-e^{-\vec{\lambda}.\vec{s}}g(v)] \nonumber \\
	&+\sum_{i=1}^N\alpha_i(w,v)\delta_0(v^i)\left[e^{-\vec{\lambda}.(\vec{s}-s^i\vec{e_i})}g(v+e_i)-e^{-\vec{\lambda}.\vec{s}}g(v)\right]\nonumber\\
	&-\underbrace{(\sum_{i=1}^N\lambda_i)}_{\lvert\lambda\rvert}\ e^{-\vec{\lambda}.\vec{s}}g(v)
	\end{align}
	So in \eqref{mes-gen-N} we get:
	\begin{align}\label{lap-gen}
		\sum_{k=1}^{2^N}\int_{s\in \reels_+^N}&\mathcal{B}_{w}f(s,\vec{v}_k)\mu^w_k\pi^w_k(ds)\nonumber\\
		=&\sum_{k=1}^{2^N}\left[\left(\sum_{i=1}^N\beta\delta_1(v_k^i)[g(\vec{v}_k-e_i)-g(\vec{v}_k)]-\alpha_i(\vec{v}_k)\delta_0(v_k^i)g(\vec{v}_k)\right)-\lvert\lambda\rvert g(\vec{v}_k)\right]\mu^w_k\underbrace{\int_se^{-\vec{\lambda}.\vec{s}}\pi^w_k(ds)}_{\mathcal{L}(\pi^w_k)(\lambda)}\nonumber\\
		&+\sum_{k=1}^{2^N}\left[\sum_{i=1}^N\alpha_i(\vec{v}_k)\delta_0(v_k^i)g(\vec{v}_k+e_i)\underbrace{\int_se^{-\vec{\lambda}.(\vec{s}-s^i\vec{e_i})}\pi^w_k(ds)}_{\mathcal{L}(\pi^w_k)(\widehat{\lambda}_i)}\right]\mu^w_k=0
	\end{align}
	Where $\widehat{\lambda}_i=(\lambda_{1},...,\lambda_{i-1},0,\lambda_{i+1},...,\lambda_{N})$. We first show recursively that we can express $\mathcal{L}(\pi^w_k)(\lambda)$ in function of linear combinations of  $\mathcal{L}(\pi^w_l)(\check{\lambda}_l)$ where $\check{\lambda}_l = (0, ..., 0, \lambda_l, 0, ..., 0)$: step 1. Second, we show there exists $D(\check{\lambda}_l)$ invertible such that:
	\[D(\check{\lambda}_l)\begin{bmatrix}
	\mathcal{L}(\pi^w_1) (\check{\lambda}_l)\\
	\vdots\\
	\mathcal{L}(\pi^w_{2^N}) (\check{\lambda}_l)
	\end{bmatrix}
	=\Lambda^{(l)},\ \ with\ \Lambda^{(l)}\in \reels^{2^N}\ a\ constant\ vector\]
	Where $\check{\lambda}_l = (0, ..., 0, \lambda_l, 0, ..., 0)$: step 2. Finally, we conclude on the uniqueness of the solution $\mathcal{L}(\pi^w_k)(\lambda)$ as a linear combination of $\mathcal{L}(\pi^w_k)(\check{\lambda}_l)$.\\
	
	\subsubsection*{Step 1}
	First, we express the $\mathcal{L}(\pi^w_k)(\lambda)$ in function of the $\mathcal{L}(\pi^w_l)(\widehat{\lambda}_i)$. In particular, we find $\Gamma(\lambda): \reels_+^N\rightarrow M_{2^N}(\reels)$ and $\Lambda(\lambda): \reels_+^N\rightarrow \reels^{2^N}$, for which $\Lambda_j(\lambda)$ depends only on linear combination of $\mathcal{L}(\pi^w_l)(\widehat{\lambda}_i)$ where $i\in [\![1,N]\!]$ and $l\in [\![1,2^N]\!]$, such that:
	\begin{align}\label{matrix-lap}
	\Gamma(\lambda)\begin{bmatrix}
	\mathcal{L}(\pi^w_1) (\lambda)\\
	\vdots\\
	\mathcal{L}(\pi^w_{2^N}) (\lambda)
	\end{bmatrix}
	=\Lambda(\lambda)
	\end{align}
	To do so, we take $g(v) = \mathbb{1}_{\vec{v}_j}(v)$ in \eqref{lap-gen} and find $\Gamma \text{ and } \Lambda$ :
	\begin{align}\label{def-Gam-Lam}
	\begin{split}
	\sum_{k=1}^{2^N}\mathcal{L}(\pi^w_k)(\lambda)&\underbrace{\left[\left(\sum_{i=1}^N\beta\delta_1(v_k^i)[\mathbb{1}_{\vec{v}_j}(\vec{v}_k)-\mathbb{1}_{\vec{v}_j}(\vec{v}_k-e_i)]+\alpha_i(\vec{v}_k)\delta_0(v_j^i)\mathbb{1}_{\vec{v}_j}(\vec{v}_k)\right)+\lvert\lambda\rvert\mathbb{1}_{\vec{v}_j}(\vec{v}_k)\right]\mu^w_k}_{\Gamma_{jk}(\lambda)}\\
	&=\underbrace{\sum_{k=1}^{2^N}\left[\sum_{i=1}^N\alpha_i(\vec{v}_k)\delta_0(v_k^i)\mathbb{1}_{\vec{v}_j}(\vec{v}_k+e_i)\mathcal{L}(\pi^w_k)(\widehat{\lambda}_i)\right]\mu^w_k}_{\Lambda_j(\lambda)}
	\end{split}
	\end{align}
	We can remark from~\eqref{rel-jump} that:
	\begin{align*}
	\left\{
	\begin{array}{r c l}
	\Gamma_{jk}(\lambda) = 0\ \ \ \ \ \ \ \ \ \ \ \ \ \ \ \ \ \ &\forall k<j \\
	\\
	\Gamma_{jj}(\lambda) = \left[\left(\sum_{i=1}^N\beta\delta_1(v_j^i)+\alpha_i(\vec{v}_j)\delta_0(v_j^i)\right)+\lvert\lambda\rvert\right]\mu^w_j>0&
	\\
	\\
	\Gamma_{jk}(\lambda) = -\sum_{i=1}^N\beta\delta_1(v_k^i)\mathbb{1}_{\vec{v}_j}(\vec{v}_k-e_i)\mu^w_k,\ \ \ \ \ \ \ \ &\forall k>j
	\end{array}
	\right.
	\end{align*}
	So
	\begin{align}
	\begin{split}
	\Gamma_{jj}(\lambda)&=\mu^w_j\sum_{i=1}^{N}\beta\delta_1(v_j^i)+\alpha_i(\vec{v}_j)\delta_0(v_j^i)+|\lambda|\mu^w_j\\
	&=\sum_{k=1,k\neq j}^{2^N}\mu^w_k\sum_{i=1}^{N}[\beta\delta_1(v_k^i)\mathbb{1}_{\vec{v}_j}(\vec{v}_k-e_i)+\alpha_i(\vec{v}_k)\delta_0(v_k^i)\mathbb{1}_{\vec{v}_j}(\vec{v}_k+e_i)]+|\lambda|\mu^w_j\\
	&= \sum_{k=1,k\neq j}^{2^N}|\Gamma_{jk}(\lambda)|+\sum_{k=1,k\neq j}^{2^N}\mu^w_k\sum_{i=1}^{N}\alpha_i(\vec{v}_k)\delta_0(v_k^i)\mathbb{1}_{\vec{v}_j}(\vec{v}_k+e_i)+|\lambda|\mu^w_j
	\end{split}
	\end{align}
	Thus $\Gamma$ is invertible as a strictly dominant diagonal matrix as soon as $|\lambda|\geq 0$. We will use the same idea in what follows to show there is a unique way to express each $\mathcal{L}(\pi^w_m)(\lambda)$, $m\in I$, as a linear combination of terms of the family $\left(\mathcal{L}(\pi^w_k)(\check{\lambda}_l)\right)_{1\leq l\leq N,m\in I}$.\\
	
	Second, take a sequence $k_1,\ k_2,\ ...\ ,k_d \in [\![1,N]\!]$, $d\leq N-1$ and define as before $\widehat{\lambda}_{k_1...k_d}$ which checks the conditions $\widehat{\lambda}_{k_1...k_d}^{k_i}=0$. We have from~\eqref{matrix-lap}:
	\begin{align}\label{eq-hat-d}
	\Gamma(\widehat{\lambda}_{k_1...k_d})\begin{bmatrix}
	\mathcal{L}(\pi^w_1) (\widehat{\lambda}_{k_1...k_d})\\
	\vdots\\
	\mathcal{L}(\pi^w_{2^N}) (\widehat{\lambda}_{k_1...k_d})
	\end{bmatrix}
	=\Lambda(\widehat{\lambda}_{k_1...k_d})
	\end{align}
	Using~\eqref{def-Gam-Lam} we get:
	\begin{align*}
	\Lambda_j(\widehat{\lambda}_{k_1...k_d})=\sum_{k=1}^{2^N}\left[\left(\sum_{i\notin \{k_1, ... , k_d\}}\alpha_i(\vec{v}_k)\delta_0(v_k^i)\mathbb{1}_{\vec{v}_j}(\vec{v}_k+e_i)\mathcal{L}(\pi^w_k)(\widehat{\lambda}_{k_1...k_dm})\right)\right.\\
	\left. + \underbrace{\sum_{i\in\{k_1,...,k_d\}}\alpha_i(\vec{v}_k)\delta_0(v_k^i)\mathbb{1}_{\vec{v}_j}(\vec{v}_k+e_i)}_{\Gamma'_{jk}}\mathcal{L}(\pi^w_k)(\widehat{\lambda}_{k_1...k_d})\right]\mu^w_k
	\end{align*}
	Hence we can decompose $\Lambda_j(\widehat{\lambda}_{k_1...k_d})$ as follows:
	\[\Lambda(\widehat{\lambda}_{k_1...k_d})=\Lambda^{(k_1...k_d)}(\lambda)+\Gamma' \begin{bmatrix}
	\mathcal{L}(\pi^w_1) (\widehat{\lambda}_{k_1...k_d})\\
	\vdots\\
	\mathcal{L}(\pi^w_{2^N}) (\widehat{\lambda}_{k_1...k_d})
	\end{bmatrix}\]
	Where $\Lambda_j^{(k_1...k_d)}(\lambda)$ depends on $\lambda$ only through $\left(\mathcal{L}(\pi^w_{k}) (\widehat{\lambda}_{k_1...k_dm})\right)_{m\notin \{k_1,...,k_d\},k\in I}$. Thus, equation~\eqref{eq-hat-d} can be rewritten as:
	\begin{align}
	\underbrace{\left[\Gamma(\widehat{\lambda}_{k_1...k_d})-\Gamma'\right]}_{\Gamma^{(k_1...k_d)}(\widehat{\lambda}_{k_1...k_d})}\begin{bmatrix}
	\mathcal{L}(\pi^w_1) (\widehat{\lambda}_{k_1...k_d})\\
	\vdots\\
	\mathcal{L}(\pi^w_{2^N}) (\widehat{\lambda}_{k_1...k_d})
	\end{bmatrix}
	=\Lambda^{(k_1...k_d)}(\lambda)
	\end{align} 
	Eventually, we show $\Gamma^{(k_1...k_d)}(\widehat{\lambda}_{k_1...k_d})$ is invertible as soon as $|\lambda|\geq0$, denoting by $K=\{k_1,...,k_d\}$:
	\begin{align*}
	\sum_{k=1,\neq j}^{2^N}\lvert \Gamma^{(k_1...k_d)}_{jk}(\widehat{\lambda}_{k_1...k_d})\rvert &=  \sum_{k< j}\Gamma'_{jk}+\sum_{k> j}\Gamma(\widehat{\lambda}_{k_1...k_d})\\
	&= \sum_{k\neq j}^{2^N}\left(\sum_{i\in K}\alpha_i(\vec{v}_k)\delta_0(v_k^i)\mathbb{1}_{\vec{v}_j}(\vec{v}_k+e_i)+\sum_{i=1}^N\beta\delta_1(v_k^i)\mathbb{1}_{\vec{v}_j}(\vec{v}_k-e_i)\right)\mu^w_k\\
	& =\sum_{k=1,k\neq j}^{2^N}\mu^w_k\sum_{i=1}^N\left(\alpha_i(\vec{v}_k)\delta_0(v_k^i)\mathbb{1}_{\vec{v}_j}(\vec{v}_k+e_i)+\beta\delta_1(v_k^i)\mathbb{1}_{\vec{v}_j}(\vec{v}_k-e_i)\right)
	\\ & \ \ \ -\sum_{k=1,k\neq j}^{2^N}\mu^w_k\sum_{i\in I\backslash K}\alpha_i(\vec{v}_k)\delta_0(v_k^i)\mathbb{1}_{\vec{v}_j}(\vec{v}_k+e_i)\\
	&= \Gamma_{jj}(\widehat{\lambda}_{k_1...k_d})-\lvert\widehat{\lambda}_{k_1...k_d}\rvert\mu^w_j-\sum_{k=1,k\neq j}^{2^N}\mu^w_k\sum_{i\in I\backslash K}\alpha_i(\vec{v}_k)\delta_0(v_k^i)\mathbb{1}_{\vec{v}_j}(\vec{v}_k+e_i)
	\end{align*}
	Hence $\Gamma^{(k_1...k_d)}(\widehat{\lambda}_{k_1...k_d})$ is  invertible as a strictly dominant diagonal matrix as soon as $|\lambda|\geq0$. Finally, there is a unique way to express each $\mathcal{L}(\pi^w_m)(\lambda)$, $m\in I$, as a linear combination of terms of the family $\left(\mathcal{L}(\pi^w_k)(\check{\lambda}_l=\widehat{\lambda}_{1...l-1\ l+1...N})\right)_{1\leq l\leq N,m\in I}$.

	\subsubsection*{Step 2}
	To end with a way to compute $\mathcal{L}(\pi^w)(\lambda)$ we show how to find $\mathcal{L}(\pi^w_m)(\check{\lambda}_l)$ and then we get a new system of the form:
	\begin{align}\label{eq-fin-lap}
		D(\check{\lambda}_l)\begin{bmatrix}
		\mathcal{L}(\pi^w_1) (\check{\lambda}_l)\\
		\vdots\\
		\mathcal{L}(\pi^w_{2^N}) (\check{\lambda}_l)
		\end{bmatrix}
		=\Lambda^{(i)},\ \ with\ \Lambda^{(l)}\in \reels^{2^N}\ a\ constant\ vector		
	\end{align}

	The idea is the same as previously. We evaluate the expression \eqref{matrix-lap} in all $\check{\lambda}_l$ which gives:
	\begin{align}
		\Gamma(\check{\lambda}_l)\begin{bmatrix}
		\mathcal{L}(\pi^w_1) (\check{\lambda}_l)\\
		\vdots\\
		\mathcal{L}(\pi^w_{2^N}) (\check{\lambda}_l)
		\end{bmatrix}
		=\Lambda(\check{\lambda}_l)
	\end{align}
	So at line j:
	\[\Lambda_j(\check{\lambda}_l) =  \sum_{k=1}^{2^N}\left[\sum_{i=1}^N\alpha_i(\vec{v}_k)\delta_0(v_k^i)\mathbb{1}_{\vec{v}_j}(\vec{v}_k+e_i)\mathcal{L}(\pi^w_k)(\underbrace{\widehat{\lambda}_i\cap \check{\lambda}_l}_{=(0, ..., 0)\ if\ i\neq l})\right]\mu^w_k\]
	And as $\mathcal{L}(\pi^w_k)(0, ...,0) = 1$ we have:
	\[\Lambda_j(\check{\lambda}_l) = \underbrace{\sum_{k=1}^{2^N}\mu^w_k\sum_{i=1,i\neq l}^N\alpha_i(\vec{v}_k)\delta_0(v_k^i)\mathbb{1}_{\vec{v}_j}(\vec{v}_k+e_i)}_{\Lambda^{(l)}_j = cst} + \sum_{k=1}^{2^N}\underbrace{\alpha_l(\vec{v}_k)\delta_0(v_k^l)\mathbb{1}_{\vec{v}_j}(\vec{v}_k+e_l)\mu^w_k}_{D_{jk}\text{ for k<j}}\mathcal{L}(\pi^w_k)(\check{\lambda}_l)\]
	We conclude showing D is a diagonally dominant matrix:
	\begin{align*}
	\left\{
	\begin{array}{r c l}
	&D_{jk}(\check{\lambda}_l) = \alpha_l(\vec{v}_k)\delta_0(v_k^l)\mathbb{1}_{\vec{v}_j}(\vec{v}_k+e_l)\mu^w_k&,\ \ \forall k<j\\
	\\
	&D_{jj}(\check{\lambda}_l) = \Gamma_{jj}(\check{\lambda}_l)= \left[\left(\sum_{i=1}^N\beta\delta_1(v_j^i)+\alpha_i(\vec{v}_j)\delta_0(v_j^i)\right)+\lambda_l\right]\mu^w_j&
	\\
	\\
	&D_{jk}(\check{\lambda}_l) = \Gamma_{jk}(\check{\lambda}_l)= -\sum_{i=1}^N\beta\delta_1(v_k^i)\mathbb{1}_{\vec{v}_j}(\vec{v}_k-e_i)\mu^w_k&,\ \ \ \forall k>j
	\end{array}
	\right.
	\end{align*}
	As previously we show thanks to~\eqref{rel-jump} that whenever $\lambda_l\geq0$
	\[\lvert D_{jj}(\check{\lambda}_l)\rvert = \sum_{k=1,k\neq j}^{2^N}\lvert D_{jk}(\check{\lambda}_l)\rvert+ \mu^w_k\sum_{i=1,i\neq l}^{N}\alpha_i(\vec{v}_k)\delta_0(v_k^i)\mathbb{1}_{\vec{v}_j}(\vec{v}_k+e_i)+\lambda_l\mu^w_j > \sum_{k=1,k\neq j}^{2^N}\lvert D_{jk}(\check{\lambda}_l)\rvert\]
	
	Hence, $\mathcal{L}(\pi^w_k)(\check{\lambda}_l)$ are uniquely determined by~\eqref{eq-fin-lap} for all k, and for any l. Moreover, there is a unique way to express each $\mathcal{L}(\pi^w_m)(\lambda)$, $m\in I$, as a linear combination of terms of the family $\left(\mathcal{L}(\pi^w_k)(\check{\lambda}_l)\right)_{1\leq l\leq N,m\in I}$. We conclude that if it exists, ~\eqref{mes-gen-N} has a unique solution $\pi^w$.
\end{proof}
\subsection{Slow Fast analysis}\label{slow-fast-sec}


	As we know that synaptic weights dynamics are slow compare to the network dynamics, $(S_t,V_t)_{t\geq0}$ change fast compare to  $(W_t)_{t>0}$, so:
	\[\sum_{\tilde{w} \in G_i^w, \tilde{w}\neq w}\phi^i(s,\tilde{w},w)\ll\phi^i(s,w,w)\]
	Hence, in order to make a slow fast analysis we introduce the sequence $(\epsilon_n)_{n\geq 0}$, such that $\displaystyle \lim_{n\infty}\epsilon_n=0$, as follows:
	\begin{align}\label{def_phi_n}
	\sum_{\tilde{w} \in G_i^w, \tilde{w}\neq w}\phi_n^i(s,\tilde{w},w)=O(\epsilon_n)=1-\phi_n^i(s,w,w)
	\end{align}
	
%
	We denote $\varphi^i$ functions such that 
	\begin{align}\label{def_varphi}
	\phi_n^i(s,\tilde{w},w)=\epsilon_n\varphi^i(s,\tilde{w},w)+o(\epsilon_n),\text{ so that } \sum_{\tilde{w} \in G_i^w, \tilde{w}\neq w}\varphi^i(s,\tilde{w},w)=O(1)
	\end{align}
	\begin{remarque}\label{rem-sf}
		{\it We give an example to illustrate~\eqref{def_varphi}. 
		Indeed, $\varphi^i$ depends on the choice of $\phi_n^i$ which is not unique.
		For instance, if the slow process comes from the fact $p^{+/-}$ are multiplied by $\epsilon_n$. Thus, $\phi_n^i$ can be deduced from~\eqref{def-proba-saut} where we replace $p^{+/-}$ by $\epsilon_np^{+/-}$:
		\[\phi_n^i(s,w',w)=\prod_{j\neq i}\left[\zeta_p^j\ \epsilon_n p^+(s_j)+(1-\zeta_p^j)(1-\epsilon_n p^+(s_j))\right]\left[\zeta_d^j\ \epsilon_n p^-(s_j)+(1-\zeta_d^j)(1-\epsilon_n p^-(s_j))\right]\]
		So, reminding that $W_t^{ij} \geq \Delta w$ for all $t\geq 0$:
		\begin{align*}
		&\phi_n^i(s,w'=w-\Delta w E_{ij},w)=\left\{
		\begin{array}{rcr}
		\hspace{-1em}&\epsilon_n p^-(s_j)\prod_{k\neq i,j}\left[(1-\epsilon_n p^-(s_k))\right]\prod_{k\neq i}\left[(1-\epsilon_n p^+(s_k))\right] & \text{ if } w^{ij}>\Delta w\\ \\
		\hspace{-1em}&0 & \text{ if } w^{ij}=\Delta w\\
		\end{array}
		\right.\\ \\
		&\phi_n^i(s,w'=w+\Delta w E_{ij},w)=\epsilon_n p^+(s_j)\prod_{k\neq i}\left[(1-\epsilon_n p^-(s_k))\right]\prod_{k\neq i,j}\left[(1-\epsilon_n p^+(s_k))\right]
		\end{align*}
		For all other $w'$, $\ \phi_n^i(s,w',w)=o(\epsilon_n)$.\\
		Thus:
		\begin{align*}
		&\phi_n^i(s,w'=w-\Delta w E_{ij},w)=\left\{
		\begin{array}{rcr}
		&\epsilon_n p^-(s_j)+o(\epsilon_n) & \text{ if } w^{ij}>\Delta w\\
		&0 & \text{ if } w^{ij}=\Delta w\\
		\end{array}
		\right.
		\\
		&\phi_n^i(s,w'=w+\Delta w E_{ij},w)=\epsilon_n p^+(s_j)+o(\epsilon_n)
		\end{align*}
		Hence we give the $\varphi^i$ which verifies conditions of~\eqref{def_phi_n} and~\eqref{def_varphi} for this example:
		\begin{align*}
		&\varphi^i(s,w'=w-\Delta w E_{ij},w)=\left\{
		\begin{array}{rcr}
		&p^-(s_j) & \text{ if } w^{ij}>\Delta w\\
		&0 & \text{ if } w^{ij}=\Delta w\\
		\end{array}
		\right.
		\\
		&\varphi^i(s,w'=w+\Delta w E_{ij},w)=p^+(s_j)\\
		&\varphi^i(s,w',w)\ \ \ \ \ \ \ \ \ \ \ \ \ \ \ \ \ \ =0 \ \ \ \ \ \ \ \ \ \ \ \ \ \ \ \ \ \ \text{for all other } w'
		\end{align*}
		We give another example: if we keep the normal $p^{+/-}$, and define $\phi_n^i$ as $\phi_n^i=\epsilon_n\phi^i$, then $\varphi^i=\phi^i$.}
	\end{remarque}
\vspace{2em}
	We now highlight the difference of time scale in the new generator $\mathcal{C}'_n$ which is the same as $\mathcal{C}$ with $\phi_n^i$ instead of $\phi^i$. In the following, test functions we take are all in $D(\mathcal{C})\cap C_b(E)$ : 
	\begin{align*}
	\mathcal{C}'_nf(w,s,v) &=\sum_{i=1}^N\partial_{s_i}f(w,s,v)+\sum_i\delta_1(v^i)\beta[f(w,s,v-e_i)-f(w,s,v)]\\
	&+\sum_i\alpha_i(w,v)\delta_0(v^i)\left(f(w,\ s-s_ie_i,\ v+e_i)-f(w,s,v)\right)\phi^i_n(s,w,w)\\
	&+\sum_i\alpha_i(w,v)\delta_0(v^i)\left(\sum_{\tilde{w} \in G_i^w, \tilde{w}\neq w}(f(\tilde{w},\ s-s_ie_i,\ v+e_i)-f(w,\ s,\ v))\phi^i_n(s,\tilde{w},w)\right)\\
	&=\underbrace{\sum_{i=1}^N\partial_{s_i}f(w,s,v)}_{\mathcal{B}_{tr}f(w,s,v)}+\underbrace{\sum_i\delta_1(v^i)\beta[f(w,s,v-e_i)-f(w,s,v)]}_{\mathcal{B}_{\downarrow}f(w,s,v)}\\
	&+(1-O(\epsilon_n))\underbrace{\sum_i\alpha_i(w,v)\delta_0(v^i)\left(f(w,\ s-s_ie_i,\ v+e_i)-f(w,s,v)\right)}_{\mathcal{B}_{\uparrow}f(w,s,v)}\\
	&+\sum_i\alpha_i(w,v)\delta_0(v^i)\left(\sum_{\tilde{w} \in G_i^w, \tilde{w}\neq w}(f(\tilde{w},\ s-s_ie_i,\ v+e_i)-f(w,\ s,\ v))(\epsilon_n\varphi^i(s,\tilde{w},w)+o(\epsilon_n))\right)
	\end{align*}
	Denoting the operator $\mathcal{A}: D(\mathcal{A})\subset C_b(E)\rightarrow C_b(E)$ by:
	\begin{align}\label{def-A}
	\mathcal{A}f(w,s,v) = \sum_i\alpha_i(w,v)\delta_0(v^i)\left(\sum_{\tilde{w} \in G_i^w, \tilde{w}\neq w}(f(\tilde{w},\ s-s_ie_i,\ v+e_i)-f(w,\ s,\ v))\varphi^i(s,\tilde{w},w)\right)
	\end{align}
	And $\mathcal{A}_r: D(\mathcal{A}_r)\subset C_b(E)\rightarrow C_b(E)$ by:
	\begin{align}\label{def-Ar}
	\mathcal{A}_rf(w,s,v) = \sum_i\alpha_i(w,v)\delta_0(v^i)\left(\sum_{\tilde{w} \in G_i^w, \tilde{w}\neq w}(f(\tilde{w},\ s-s_ie_i,\ v+e_i)-f(w,\ s,\ v))\right)
	\end{align}
	And $\mathcal{B}: D(\mathcal{B})\subset C_b(E)\rightarrow C_b(E)$:
	\begin{align}\label{def-B}
	\mathcal{B}=\mathcal{B}_{tr}+\mathcal{B}_{\downarrow}+\mathcal{B}_{\uparrow}
	\end{align}
	
	With the previous assumptions on time scales we get the following process $(\widetilde{X}^n_t)_{t\geq 0}=(\widetilde{W}_t^n,\widetilde{S}_t^n,\widetilde{V}_t^n)_{t\geq 0}$ generated by:
	\[\mathcal{C}'_n = \epsilon_n\mathcal{A}+\mathcal{B}+O(\epsilon_n)\mathcal{B}_{\uparrow}+o(\epsilon_n)\mathcal{A}_r\]
	On this time scale, the network evolve at speed 1 and the plasticity at $\epsilon_n$. In order to apply results of~\cite{kurtz_averaging_1992}, we will study the system $\frac{1}{\epsilon_n}$ times faster and then denote by $(Y^n_t)_{t\geq 0}=(W_t^n,S_t^n,V_t^n)_{t\geq 0}=\left(\widetilde{X}^n_{\frac{t}{\epsilon_n}}\right)_{t\geq 0}$. Thus, $(Y^n_t)_{t\geq 0}$ is generated by:
	\begin{align}\label{def-C_n}
		\mathcal{C}_n = \frac{1}{\epsilon_n} \mathcal{C}'_n = \mathcal{A}+\frac{1}{\epsilon_n}\mathcal{B}+O(1)\mathcal{B}_{\uparrow}+o(1)\mathcal{A}_r
	\end{align}
	We remark that $\forall w\in E_1,\ h\in D(\mathcal{B})\subset C_b(E_2)$ the operator $\mathcal{B}_w$ defined by $\mathcal{B}_wh(s,v)=\mathcal{B}h(w,s,v)$ is the one studied previously. In the above, we showed it has a unique invariant measure $\pi^w$. Thereby, the process $(W_t^n,S_t^n,V_t^n)_{t\geq 0}$ with generator $\mathcal{C}_n$ is composed of a fast part which gives the dynamics of the network, $(S_t^n,V_t^n)_{t\geq 0}$, and a slow one which gives the weights' dynamics, $(W_t^n)_{t\geq 0}$. Hence, we can expect that as n tends to infinity, the fast part will quickly reach its stationary distribution depending on the current weights whereas the weights will jump from time to time. As soon as weights jump, the network will reach a new stationary distribution instantaneously. Weights jumps will depend on the network distribution.
	We apply Theorem 2.1 of~\cite{kurtz_averaging_1992} in the special case of example 2.3 given in the same article which gives in our case the following proposition:
	\begin{proposition}
		$(W_t^n,S_t^n,V_t^n)_{t\geq 0}$ converges, when $n\rightarrow+\infty$, in law to $(W_t,S_t,V_t)_{t\geq 0}$ where $(S_t,V_t)\sim \pi_{W_t}$ and $(W_t)$ is the solution of the martingale problem associated to the operator $\mathcal{C}_{av}:D(\mathcal{C}_{av})\rightarrow C_b(E_1)$: 
		\begin{align}\label{limit_eq}
		\mathcal{C}_{av}f(w)=\int_{E_2}\mathcal{A}f(w,s,v)\pi^w(ds,dv)
		\end{align}
	\end{proposition}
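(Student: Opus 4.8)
The plan is to verify the hypotheses of Kurtz's averaging theorem (Theorem~2.1 and Example~2.3 of~\cite{kurtz_averaging_1992}) for the family $(Y^n_t)_{t\geq 0}=(W^n_t,S^n_t,V^n_t)_{t\geq 0}$ generated by $\mathcal{C}_n=\mathcal{A}+\frac{1}{\epsilon_n}\mathcal{B}+O(1)\mathcal{B}_{\uparrow}+o(1)\mathcal{A}_r$, with slow component $W^n$ (moved only by $\mathcal{A}$ and $o(1)\mathcal{A}_r$) and fast component $(S^n,V^n)$ (moved by $\frac{1}{\epsilon_n}\mathcal{B}$ and the bounded perturbation $O(1)\mathcal{B}_{\uparrow}$). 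Concretely this requires three ingredients: (i) relative compactness in law of the pairs $(W^n,\Gamma^n)$, where $\Gamma^n$ is the occupation measure of the fast component; (ii) identification of every subsequential limit, showing that the fast conditional law is the family $\{\pi^w\}_{w\in E_1}$ constructed in Section~\ref{inv-mes-sec} and that $W$ solves the martingale problem for $\mathcal{C}_{av}$; (iii) well-posedness of that martingale problem, which promotes subsequential convergence to convergence of the whole sequence.

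For (i): under Assumption~\ref{ass-jump-rate} the weight process $W^n$ is a pure-jump process on the discrete set $E_1$ whose total jump rate out of any $w$ is at most $N\,\alpha_M\,\max_i|G_i^w\setminus\{w\}|\,(\sup_s\sum_{\tilde w}\varphi^i+o(1))$, a quantity finite and uniform in $w$ since $\sum_{\tilde w}\varphi^i=O(1)$ and the sets $G_i^w$ have finite ($N$-dependent) cardinality, while each jump changes $W^n$ by a matrix with entries in $\{-\Delta w,0,\Delta w\}$. Hence $W^n$ has uniformly bounded increments and a uniformly (in $n$) bounded expected number of jumps on any $[0,T]$, and tightness in the Skorokhod topology follows from Aldous' criterion. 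The only obstruction to compactness of the fast component is the non-compactness of the $S$-coordinates of $E_2=\reels_+^N\times I$; this is controlled by the Lyapunov estimate underlying Proposition~\ref{prop-lyap}, namely $\esp_x V(X_t)\leq Nt+V(x)$ with $V(s)=s_1+\dots+s_N$, which gives moment bounds on $S^n_t$ that are uniform in $n$ (accelerating the clock only resets $S^n$ more often), and hence tightness of the occupation measures $\Gamma^n$ on $E_2\times[0,\infty)$.

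For (ii): fix a subsequential limit $(W,\Gamma)$ of $(W^n,\Gamma^n)$ and write $\Gamma(ds\,dv\,dr)=\gamma_r(ds\,dv)\,dr$. Applying $\mathcal{C}_n$ to a test function $h\in D(\mathcal{B})$ depending only on $(s,v)$, the slow terms disappear and there remains $\frac{1}{\epsilon_n}\mathcal{B}h$ together with the bounded term $O(1)\mathcal{B}_{\uparrow}h$; since the corresponding martingale and the running integral of $O(1)\mathcal{B}_{\uparrow}h$ are $O(1)$ uniformly in $n$, multiplying the identity by $\epsilon_n$ and letting $n\to\infty$ forces $\int_{E_2}\mathcal{B}_{W_r}h\,\gamma_r(ds\,dv)=0$ for a.e.\ $r$. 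By Proposition~\ref{prop-inv-mes-gen} and the uniqueness established in Section~\ref{inv-mes-sec}, this identifies $\gamma_r=\pi^{W_r}$; no separate continuity of $w\mapsto\pi^w$ is needed, since $E_1$ carries the discrete topology and $w\mapsto\mathcal{B}_w$ depends on $w$ only through the continuous finite-dimensional maps $\alpha_i$. Taking next a test function $f\in D(\mathcal{C}_{av})$ of $w$ alone, one has $\mathcal{C}_n f=\mathcal{A}f+o(1)\mathcal{A}_r f$ because $\frac{1}{\epsilon_n}\mathcal{B}$ and $O(1)\mathcal{B}_{\uparrow}$ do not act on $w$; passing to the limit in $f(W^n_t)-\int_0^t\mathcal{C}_n f(Y^n_r)\,dr$ and using $\gamma_r=\pi^{W_r}$ shows that $f(W_t)-\int_0^t\int_{E_2}\mathcal{A}f(W_r,s,v)\,\pi^{W_r}(ds,dv)\,dr$ is a martingale, i.e.\ $W$ solves the martingale problem for $\mathcal{C}_{av}$.

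Finally, for (iii), integrating $\mathcal{A}$ against $\pi^w$ does not enlarge the rates: $|\mathcal{C}_{av}f(w)|\leq 2\|f\|_\infty\,N\,\alpha_M\,\max_i|G_i^w|<\infty$ uniformly in $w$, so $\mathcal{C}_{av}$ is a bounded generator of a pure-jump Markov process on the countable space $E_1$ and its martingale problem is well-posed on the separating domain $D(\mathcal{C}_{av})$; therefore the subsequential limit is unique and the whole sequence $(W^n,S^n,V^n)$ converges in law to $(W,S,V)$ with $(S_t,V_t)\sim\pi^{W_t}=\pi_{W_t}$. I expect the main obstacle to be step (i), the relative compactness — in particular the uniform-in-$n$ control of the fast $S$-coordinates through the Lyapunov bound — together with the careful bookkeeping in (ii) showing that $O(1)\mathcal{B}_{\uparrow}$ perturbs the fast dynamics only at relative order $\epsilon_n$ (so the fast process still equilibrates to $\pi^w$) and that $o(1)\mathcal{A}_r$ contributes nothing to the limiting slow motion.
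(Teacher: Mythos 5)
Your proposal is correct and follows essentially the same route as the paper: both verify the hypotheses of Kurtz's averaging theorem (Theorem~2.1 and Example~2.3 of~\cite{kurtz_averaging_1992}) via compact containment of $(W^n_t)$, uniqueness of $\pi^w$, the two martingale decompositions for the slow and fast parts, and identification of the limiting occupation measure by multiplying the fast martingale relation by $\epsilon_n$ and using that a continuous bounded-variation martingale is constant. If anything, you are more explicit than the paper on two conditions it leaves implicit, namely tightness of the occupation measures over the non-compact $S$-coordinates of $E_2$ and well-posedness of the limiting martingale problem, which is what upgrades subsequential convergence to convergence of the whole sequence.
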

	\begin{proof}
		We use the Theorem 2.1 of~\cite{kurtz_averaging_1992} twice. Once to link the occupation measure of the fast process to its invariant measure and then again to show~\eqref{limit_eq}.\\
		We denote by $\mathcal{F}_t^n$ the natural filtration of $(W_t^n,S_t^n,V_t^n)_{t\geq 0}$. I will enumerate and show the properties we need in order to apply \cite{kurtz_averaging_1992}.\\
		
		1. $(W_t^n)_{t\geq 0}$ satisfies the compact containment condition that is for each $\epsilon>0$ and $T>0$ there exists a compact $K\subset E_1$ such that:
		\[\inf_n\proba(W^n_t\in K, t\leq T)\geq 1-\epsilon\]
		\begin{proof}
			We denote $K_i=\left\{\tilde{w}\in E_1\ s.t.\ \forall k,l\in [\![1,N]\!],\ \ \lvert\tilde{w}^{kl}-w^{kl}_0\rvert \leq i\Delta w \right\}$. Therefore, we want to show that for each $\epsilon>0$ and $T>0$, $\exists i$ large enough to have $\forall n\in \nat$:
			\begin{align}\label{cond-1}
			\proba(W^n_t\in K_{i-1}, t\leq T)\geq 1-\epsilon
			\end{align}
			But:
			\begin{align*}
			\proba(W_t^n\in K_{i-1}, t\leq T)=\proba\left(\widetilde{W}^n_t\in K_{i-1},\ \ t\leq \frac{T}{\epsilon_n}\right)=1-\proba\left(\exists t\leq \frac{T}{\epsilon_n},\ \ \widetilde{W}^n_t\notin K_{i-1}\right)
			\end{align*}
			So we major $\proba\left(\exists t\leq \frac{T}{\epsilon_n},\ \ \widetilde{W}^n_t\notin K_{i-1}\right)$ in what follows. As $\lim_{n\infty}\frac{T}{\epsilon_n} = +\infty$, the time on which we are looking at our process is becoming larger and larger with $n$ so we need the probability of jumping to become smaller and smaller as it is the case for $(\widetilde{W}^n_t)_{t\geq 0}$. Indeed, when neuron $i$ jumps from 0 to 1, $w^{ij}$ and $w^{ji}$ for $j\neq i$ have probability to jump of order $\epsilon_n$.\\
			First, from~\eqref{def_phi_n} there exists $c>0$ such that the probability to have a change of weight knowing neuron $i$ jumped from 0 to 1 is less than $c\ \epsilon_n$, so for all $i, s \text{ and } w$:
			\[\sum_{\tilde{w} \in G_i^w, \tilde{w}\neq w}\phi^i_n(s,\widetilde{w},w)=\proba\left(\widetilde{W}^n_t\neq \widetilde{W}^n_{t^-}| \widetilde{V}^{n,i}_t-\widetilde{V}^{n,i}_{t^-}=1\right)\leq c\ \epsilon_n<1\]
			From this we define the process $\overline{X}_t^n$ as the particular case of the process $\widetilde{X}^n_t$ for which neurons are independent and fire at rate $\gamma=\max(\beta,\alpha_M)$ and whenever a neuron $i$ jumps (from 0 to 1 or 1 to 0), $\overline{W}^n_t$ change with probability $c\ \epsilon_n$. We just impose that the size of weights jumps are as before: $+/-\ \Delta w$. Hence, in such a process weights jump more frequently. So denoting by $N_t^w$ and $\overline{N}_t^w$ processes respectively counting the number of jumps of $\widetilde{W}^n_t$ and $\overline{W}_t^w$ between 0 and t, and as previously, $\overline{N}_t$ the counting process corresponding to the number of jump of the process $(\overline{V}_t)_{t\geq 0}$. Thus:
			\begin{align*}
			\proba\left(\exists t\leq \frac{T}{\epsilon_n},\ \ \widetilde{W}^n_t\notin K_{i-1}\right)&=\proba\left(\exists k,l\in [\![1,N]\!],\ \ \exists t\leq \frac{T}{\epsilon_n},\ \ \lvert (\widetilde{W}^n_t)^{kl}-w^{kl}_0\rvert \geq i\Delta w\right)\\
			&\leq\proba\left(N_{\frac{T}{\epsilon_n}}^w \geq i\right)\leq \proba\left(\overline{N}_{\frac{T}{\epsilon_n}}^w \geq i\right)=\sum_{k=i}^{+\infty}\proba(\overline{N}_{\frac{T}{\epsilon_n}}^{w} = k)
			\end{align*}
			But 
			\begin{align*}
			\proba(\overline{N}_{\frac{T}{\epsilon_n}}^{w} = k)&=\sum_{m=k}^{+\infty}\left(\proba(\overline{N}_{\frac{T}{\epsilon_n}} =m) (c\epsilon_n)^k\left(1-c\epsilon_n\right)^{m-k} \binom{m}{k}\right)\\
			&=\sum_{m=k}^{+\infty}e^{-N\alpha_M\frac{T}{\epsilon_n}}\frac{(N\alpha_M\frac{T}{\epsilon_n})^m}{m!} \underbrace{(c\epsilon_n)^k\left(1-c\epsilon_n\right)^{m-k} \binom{m}{k}}_{probability\ (\overline{W}^n_t)_{t\geq 0}\ changed\ k\ times\ knowing\ (\overline{V}^n_t)_{t\geq 0}\ jumped\ m\ times}
			\end{align*}
			So for $\epsilon_n$ small enough:
			\begin{align*}
			\proba\left(\exists t\leq \frac{T}{\epsilon_n},\ \ \widetilde{W}^n_t\notin K_{i-1}\right)&\leq\sum_{k=i}^{+\infty}\sum_{m=k}^{+\infty}\left(e^{-N\alpha_M\frac{T}{\epsilon_n}}\frac{(N\alpha_M\frac{T}{\epsilon_n})^m}{m!} (c\epsilon_n)^k\left(1-c\epsilon_n\right)^{m-k} \binom{m}{k}\right)\\
			&\leq\sum_{k=i}^{+\infty}e^{-N\alpha_M\frac{T}{\epsilon_n}}\sum_{m=k}^{+\infty}\left(\frac{(N\alpha_MT)^m}{k!(m-k)!}\frac{c^k}{(\epsilon_n)^{m-k}}\left(1-c\epsilon_n\right)^{m-k}\right)\\
			&\leq\sum_{k=i}^{+\infty}\frac{(N\alpha_MTc)^k}{k!}e^{-N\alpha_M\frac{T}{\epsilon_n}}\underbrace{\sum_{m=k}^{+\infty}\left(\frac{(N\alpha_MT)^{m-k}(\frac{1}{\epsilon_n}-c)^{m-k}}{(m-k)!}\right)}_{e^{N\alpha_M(\frac{1}{\epsilon_n}-c)T}}\\
			&\leq \sum_{k=i}^{+\infty}\frac{(N\alpha_MTc)^k}{k!}e^{-N\alpha_MT} \leq \sum_{k=i}^{+\infty}\frac{(N\alpha_MTc)^k}{k!}\underset{i\rightarrow +\infty}{\longrightarrow }0
			\end{align*}
			Hence, $\exists\ i$ such that~\eqref{cond-1} is satisfied.	
		\end{proof}
		
		\vspace{1.5em}
		2. Moreover, define $\forall w\in E_1,\ h\in D(\mathcal{B})\subset C_b(E_2)$ the operator $\mathcal{B}_w$ by $\mathcal{B}_wh(s,v)=\mathcal{B}h(w,s,s)$. There exists a unique probability measure on $E_2$ $\pi^w$ such that:
		\[\int_{E_2} \mathcal{B}_w(s,v) \pi^w(ds,dv)=0\]
		\begin{proof}
			See theorem~\ref{theo-inv-mes}.
		\end{proof}
		\vspace{1.5em}
		3. $\forall g\in D(\mathcal{C})\cap C_b(E_1)$ :
		\begin{align}\label{eq-mart-A}
		g(W_t^n)-\int_{0}^{t}\mathcal{A}g(W_u^n,S_u^n,V_u^n)du+o_{\epsilon_n}(1)\int_{0}^{t}\mathcal{A}_rg(W_u^n,S_u^n,V_u^n)du
		\end{align}
		is a $\mathcal{F}_t^n$ martingale and $\forall (w,s,v)\in E$
		\begin{align}\label{eq-lim-Ar}
			\lim_{n\rightarrow +\infty}\esp_{(w,s,v)}\left[\sup_{t\leq T} \left| o_{\epsilon_n}(1)\int_{0}^{t}\mathcal{A}_rg(W_u^n,S_u^n,V_u^n)du\right|\right]=0
		\end{align}
		
		\begin{proof}
			$\forall f\in D(\mathcal{C})=D(\mathcal{C}_n)$:
			\begin{align}\label{mart-gen}
			f(W_t^n,S_t^n,V_t^n)-\int_{0}^{t}\mathcal{C}_nf(W_u^n,S_u^n,V_u^n)du
			\end{align}
			is a $\mathcal{F}_t^n$ martingale and $\forall g\in D(\mathcal{C})\cap C_b(E_1)$
			\begin{align*}
			\epsilon_n\mathcal{C}_ng(w,s,v)&=\sum_i\alpha_i(w,v)\delta_0(v^i)\left(\sum_{\tilde{w} \in G_i^w, \tilde{w}\neq w}(g(\tilde{w})-g(w))\phi^i_n(s,\tilde{w},w)\right)\\
			&=\epsilon_n\mathcal{A}g(w,s,v)+o(\epsilon_n)\mathcal{A}_rg(w,s,v)
			\end{align*}
			So~\eqref{eq-mart-A} is a $\mathcal{F}_t^n$ martingale.\\
			Moreover, as $g\in D(\mathcal{A})\cap C_b(E_1)$ and $\max_{i\in I,w\in E_1}\left(\#G_i^w\right)\cap\leq 2^{2(N-1)}$, $\exists M>0$ such that:
			\begin{align*}
				|\mathcal{A}_rg(w,s,v)| &=\left| \sum_i\alpha_i(w,v)\delta_0(v^i)\sum_{\tilde{w} \in G_i^w, \tilde{w}\neq w}(g(\tilde{w})-g(w))\right|\\
				&\leq 2^N\ \alpha_M\ \left(\max_{i\in I,w\in E_1}\#G_i^w\right) 2\sup_{x\in E_1}|g(x)|\leq M
			\end{align*}
			Hence, $\forall (w,s,v)\in E$, $\esp_{(w,s,v)}\left[\sup_{t\leq T} \left| o_{\epsilon_n}(1)\int_{0}^{t}\mathcal{A}_rg(W_u^n,S_u^n,V_u^n)du\right|\right]=o_{\epsilon_n}(1)$, thus condition~\eqref{eq-lim-Ar} is satisfied.
		\end{proof}
		\vspace{1.5em}
		
		4. Similarly, $\forall h\in D(\mathcal{C})\cap C_b(E_2)$
		\begin{align}\label{mart-B}
		h(S_t^n,V_t^n)-\int_{0}^{t}\frac{1}{\epsilon_n}\mathcal{B}h(W_u^n,S_u^n,V_u^n)du
		\end{align}
		is a $\mathcal{F}_t^n$ martingale
		
		\begin{proof}
			\begin{align*}
			\epsilon_n\mathcal{C}_nh(w,s,v)&=\sum_{i=1}^N\partial_{s_i}h(s,v)+\sum_i\delta_1(v^i)\beta[h(s,v-e_i)-h(s,v)]\\
			&+\sum_i\alpha_i(w,v)\delta_0(v^i)\left(h(\ s-s_ie_i,\ v+e_i)-h(s,v)\right)\phi^i_n(s,w,w)\\
			&+\sum_i\alpha_i(w,v)\delta_0(v^i)\left(\sum_{\tilde{w} \in G_i^w, \tilde{w}\neq w}(h(\ s-s_ie_i,\ v+e_i)-h(\ s,\ v))\phi^i_n(s,\tilde{w},w)\right)\\
			&=\sum_{i=1}^N\partial_{s_i}h(s,v)+\sum_i\delta_1(v^i)\beta[h(s,v-e_i)-h(s,v)]\\
			&+\sum_i\alpha_i(w,v)\delta_0(v^i)\left(h(\ s-s_ie_i,\ v+e_i)-h(s,v)\right)\underbrace{\left(\phi^i_n(s,w,w)+\sum_{\tilde{w} \in G_i^w, \tilde{w}\neq w}\phi^i_n(s,\tilde{w},w)\right)}_{=1}\\
			&=\mathcal{B}h(w,s,v)
			\end{align*}
			So
			\[\mathcal{C}_nh=\frac{1}{\epsilon_n}\mathcal{B}h\]
			As~\eqref{mart-gen}is a $\mathcal{F}_t^n$ martingale, \eqref{mart-B} is a $\mathcal{F}_t^n$ martingale too.
		\end{proof}
			
		Thus, conditions of example 2.3 of~\cite{kurtz_averaging_1992} are satisfied and $(W_t^n,S_t^n,V_t^n)_{t\geq 0}$ converges, when $n\rightarrow+\infty$, in law to $(W_t,S_t,V_t)_{t\geq 0}$ where $(S_t,V_t)\sim \pi_{W_t}$ and $(W_t)$ is the solution of the martingale problem associated to the operator $\mathcal{C}_{av}:D(\mathcal{C}_{av})\subset C_b(E_1)\rightarrow C_b(E_1)$: \[\mathcal{C}_{av}f(w)=\int_{E_2}\mathcal{A}f(w,s,v)\pi^w(ds,dv)\]
		
		Indeed, we use theorem 2.1 of~\cite{kurtz_averaging_1992} twice. First the point 1., 2. and 4. enable us to use the theorem to obtain that when $n \rightarrow +\infty$, $(W^n,\Gamma^n)\rightarrow (W,\Gamma)$ such that there exists a filtration $\{\mathcal{G}^1_t\}$ such that
		\[M_t=\int_{0}^{t} \int_{E_2}\mathcal{B}f(W(s),y)\Gamma(ds\times dy)\]
		is a $\{\mathcal{G}^1_t\}$-martingale for each $f \in \mathcal{D}(\mathcal{C}_{\lvert C_b(E_2)})$. But $M_t$ is continuous and of bounded variation, so it must be constant (see for instance Theorem 27 of~\cite{protter_stochastic_2010}) and finally $M_t=0$ for all $t>0$. We then write $\Gamma(ds\times dy)=\gamma_s(dy)ds$ and get 
		\[\int_{0}^{t} \int_{E_2}\mathcal{B}f(W(s),y)\gamma_s(dy)ds=0\]
		And then 
		
		\[\int_{E_2}\mathcal{B}f(W(s),y)\gamma_s(dy)=0\]
		So we can take $\gamma_s(dy)=\pi_{W_s}(dy)$ is the unique invariant measure for $\mathcal{B}_x$ such that $\mathcal{B}_xf(y)=\mathcal{B}f(x,y)$. We conclude using 1.,2. and 3. and the Theorem 2.1 of~\cite{kurtz_averaging_1992} which gives that
		\[\int_{0}^{t} \int_{E_2}\mathcal{A}f(W(s),y)\Gamma(ds\times dy)\]
		a martingale and thus $(W_t)$ is the solution of the martingale problem associated to the operator $\mathcal{C}_{av}:D(\mathcal{C}_{av})\subset C_b(E_1)\rightarrow C_b(E_1)$: \[\mathcal{C}_{av}f(w)=\int_{E_2}\mathcal{A}f(w,s,v)\pi^w(ds,dv)\]
	\end{proof}
	This time scale separation gives the infinitesimal generator of the weight process on the slow time scale. However, we don't know explicitly $\pi^w$ but its Laplace transform. Under some simple assumptions, we can get explicitly the dynamic of the weights which is a Markov process on $E_1$ with non-homogeneous jump rates depending on the Laplace transform of $\pi^w$. 
	\begin{proposition}\label{prop-LT}
		Suppose that for all $i$ $\exists\ \Phi^i_{(\tilde{w},w)}$ such that  $\varphi^i(s,\tilde{w},w)=\mathcal{L}\left(\Phi^i_{(\tilde{w},w)}\right)(s)$.\\
		Then,
		\[\mathcal{C}_{av}f(w)=\sum_{\tilde{w} \in G_w, \tilde{w}\neq w}(f(\tilde{w})-f(w))\left(\sum_{v\in I}\mu^w_v\sum_{i\ s.t.\ \tilde{w} \in G_i^w}\alpha_i(w,v)\delta_0(v^i)\int_{\reels_+^N}\Phi^i_{(\tilde{w},w)}(s)\mathcal{L}(\pi^w_v)(s)(ds)\right)\]
		Where $G_w=\{w' \in E_1, \proba(W_1=w'|W_0=w)>0\}$ and $\mu_v^w$ is the invariant measure of the process generated by $\mathcal{B}_0$ defined in~\eqref{def_B_0}.
	\end{proposition}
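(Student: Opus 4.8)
The plan is to unfold the two definitions involved — $\mathcal{C}_{av}$ from \eqref{limit_eq} and $\mathcal{A}$ from \eqref{def-A} — and then to exploit three facts already established: that a test function $f\in D(\mathcal{C}_{av})$ depends on $w$ only, so $f(\tilde w,\,s-s_ie_i,\,v+e_i)=f(\tilde w)$; that the invariant measure of the fast process disintegrates as in \eqref{pi-decomp}, $\pi^w(ds,v)=\sum_{k=1}^{2^N}\mu^w_k\,\pi^w_{\vec v_k}(ds)\,\mathbb{1}_{\vec v_k}(v)$, with $\mu^w$ exactly the invariant law of the pure jump chain $(V_t)_{t\ge 0}$ generated by $\mathcal{B}_0$ as identified in the uniqueness proof; and the standing hypothesis $\varphi^i(s,\tilde w,w)=\mathcal{L}\!\left(\Phi^i_{(\tilde w,w)}\right)(s)=\int_{\reels_+^N}e^{-s\cdot u}\,\Phi^i_{(\tilde w,w)}(du)$. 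First I would substitute the disintegration of $\pi^w$ and the expression of $\mathcal{A}f$ into \eqref{limit_eq}, obtaining
\[
\mathcal{C}_{av}f(w)=\sum_{v\in I}\mu^w_v\int_{\reels_+^N}\sum_i\alpha_i(w,v)\delta_0(v^i)\sum_{\tilde w\in G_i^w,\ \tilde w\neq w}(f(\tilde w)-f(w))\,\varphi^i(s,\tilde w,w)\,\pi^w_v(ds).
\]

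Next I would interchange the finite sums $\sum_i\sum_{\tilde w\in G_i^w,\ \tilde w\neq w}$ into $\sum_{\tilde w\in G_w,\ \tilde w\neq w}\sum_{i:\ \tilde w\in G_i^w}$; this is legitimate because all sums are finite, once one observes that $G_w=\{w'\in E_1:\proba(W_1=w'|W_0=w)>0\}$ equals $\bigl(\bigcup_i G_i^w\bigr)$ — the strict positivity of $p^\pm$ makes every $\tilde w\in G_i^w\setminus\{w\}$ genuinely reachable in one spike. This already exhibits $\mathcal{C}_{av}$ as a jump generator $\mathcal{C}_{av}f(w)=\sum_{\tilde w\in G_w,\ \tilde w\neq w}(f(\tilde w)-f(w))\,q(w,\tilde w)$ with nonnegative rate
\[
q(w,\tilde w)=\sum_{v\in I}\mu^w_v\sum_{i:\ \tilde w\in G_i^w}\alpha_i(w,v)\delta_0(v^i)\int_{\reels_+^N}\varphi^i(s,\tilde w,w)\,\pi^w_v(ds).
\]

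The last step rewrites the inner integral using the hypothesis: writing $\int_{\reels_+^N}\varphi^i(s,\tilde w,w)\,\pi^w_v(ds)=\int_{\reels_+^N}\!\int_{\reels_+^N}e^{-s\cdot u}\,\Phi^i_{(\tilde w,w)}(du)\,\pi^w_v(ds)$ and applying Tonelli's theorem, one swaps the order of integration and recognizes $\int_{\reels_+^N}e^{-s\cdot u}\,\pi^w_v(ds)=\mathcal{L}(\pi^w_v)(u)$, which is precisely the Laplace transform computed in Section~\ref{inv-mes-sec}; renaming the dummy variable $u$ as $s$ gives the announced formula. The two interchanges are routine; the one point genuinely needing care — and the main obstacle — is the justification of Tonelli, which is where nonnegativity enters: $\varphi^i$ is bounded and $O(1)$ by \eqref{def_varphi} and $\pi^w_v$ is a probability measure, so the $s$-integral is finite; moreover $\Phi^i_{(\tilde w,w)}$ is a nonnegative measure of finite total mass $\varphi^i(0,\tilde w,w)<\infty$ and $\mathcal{L}(\pi^w_v)\le 1$ on $\reels_+^N$, so the double integral is well defined and finite and the exchange is unconditionally valid. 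The identification of $\mu^w$ with the invariant law of $(V_t)_{t\ge 0}$ under $\mathcal{B}_0$ requires no new argument, being already contained in the uniqueness proof.
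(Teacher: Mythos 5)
Your proposal is correct and follows essentially the same route as the paper: expand $\mathcal{C}_{av}$ via \eqref{limit_eq} and \eqref{def-A}, disintegrate $\pi^w$ as in \eqref{pi-decomp}, apply the Laplace-transform hypothesis, and use the Fubini/Tonelli symmetry $\int\mathcal{L}(\Phi)(s)\,\pi^w_v(ds)=\int\Phi(u)\,\mathcal{L}(\pi^w_v)(u)\,du$ before re-indexing the sum over $\tilde w\in G_w$. The only differences are cosmetic (you interchange the sums before the Laplace swap rather than after) plus the added justification of the Tonelli step and of the identification $G_w=\bigcup_i G_i^w\setminus\{w\}$, which the paper leaves implicit.
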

	\vspace{1em}
	
	\begin{proof}
		If we develop the infinitesimal generator of the process $(W_t)_{t\geq 0}$. Thanks to \eqref{limit_eq} and \eqref{def-A} we get:
		\begin{align*}
		\mathcal{C}_{av}f(w)&=\int_{E_2}\mathcal{A}f(w,s,v)\pi^w(ds,dv)=\sum_{v\in I}\int_{E_2}\mathcal{A}f(w,s,v)\mu^w_v\pi^w_v(ds)\\
		&= \sum_{v\in I}\int_{E_2}\sum_i\alpha_i(w,v)\delta_0(v^i)\left(\sum_{\tilde{w} \in G_i^w, \tilde{w}\neq w}(f(\tilde{w})-f(w))\varphi^i(s,\tilde{w},w)\right)\mu^w_v\pi^w_v(ds)\\
		&= \sum_{v\in I}\mu^w_v\sum_i\alpha_i(w,v)\delta_0(v^i)\left(\sum_{\tilde{w} \in G_i^w, \tilde{w}\neq w}(f(\tilde{w})-f(w))\int_{E_2}\varphi^i(s,\tilde{w},w)\pi^w_v(ds)\right)\\
		\end{align*}
		With the assumption that for all $i$ $\exists\ \Phi^i_{(\tilde{w},w)}$ such that  $\varphi^i(s,\tilde{w},w)=\mathcal{L}(\Phi_{(\tilde{w},w)})(s)$ we get:
		\begin{align*}
		\mathcal{C}_{av}f(w)&= \sum_{v\in I}\mu^w_v\sum_i\alpha_i(w,v)\delta_0(v^i)\left(\sum_{\tilde{w} \in G_i^w, \tilde{w}\neq w}(f(\tilde{w})-f(w))\int_{\reels_+^N}\mathcal{L}(\Phi^i_{(\tilde{w},w)})(s)\pi^w_v(ds)\right)\\
		&=\sum_{v\in I}\mu^w_v\sum_i\alpha_i(w,v)\delta_0(v^i)\left(\sum_{\tilde{w} \in G_i^w, \tilde{w}\neq w}(f(\tilde{w})-f(w))\int_{\reels_+^N}\Phi^i_{(\tilde{w},w)}(s)\mathcal{L}(\pi^w_v)(s)(ds)\right)\\
		&=\sum_{\tilde{w} \in G, \tilde{w}\neq w}(f(\tilde{w})-f(w))\left(\sum_{v\in I}\mu^w_v\sum_{i\ s.t.\ \tilde{w} \in G_i^w}\alpha_i(w,v)\delta_0(v^i)\int_{\reels_+^N}\Phi^i_{(\tilde{w},w)}(s)\mathcal{L}(\pi^w_v)(s)(ds)\right)
		\end{align*}
	\end{proof}
	\section{Sufficient conditions for recurrence and transience}
	
	Plasticity models evolved interacting with neurologists' discoveries. For instance, models based on STDP confirmed the need of homeostasis in order to regulate evolution of weights: prevent from their divergence or extinction, need of competition. Indeed, Hebbian learning suffers from a positive feedback instability and lead to all neurons wiring together~\cite{zenke_temporal_2017}. Synaptic scaling and metaplasticity are the main homeostatic mechanisms used in models through different ways~\cite{yger_models_2015}. In our model we don't have such mechanisms, like hard or soft bounds, but we can show that weights still stabilize under some conditions. We propose some general conditions which we manage to express in a simple condition on parameters of our model.\\

	In our case, we are faced with a non-homogeneous in space and homogeneous in time Markov process which is in a space equivalent to $\nat^{N^2}$. A few results exists for such processes. As underlines authors of the book~\cite{menshikov2016non}, Lyapunov techniques seem to be the most adapted to analyse such processes. 

	For the sake of simplicity and as it doesn't change anything in what follows, we consider now $\Delta w=1$. Then $E_1=\nat_*^{N^2}$. Also, we are interested in the case presented in the first example given in remark~\ref{rem-sf}. Therefore, the slow process comes from the fact $p^{+/-}$ are multiplied by $\epsilon_n$, so:
	\begin{align*}
	&\varphi^i(s,w'=w- E_{ij},w)=\left\{
	\begin{array}{rcr}
	&p^-(s_j) & \text{ if } w^{ij}>1\\
	&0 & \text{ if } w^{ij}=1\\
	\end{array}
	\right.
	\\
	&\varphi^i(s,w'=w+ E_{ij},w)=p^+(s_j)\\
	&\varphi^i(s,w',w)\ \ \ \ \ \ \ \ \ \ \ \hspace{0.1em}\ \ \ =0 \ \ \ \ \ \ \ \ \ \ \ \ \ \ \ \ \ \ \text{for all other } w'
	\end{align*}
	If we develop the infinitesimal generator of the process $(W_t)_{t\geq 0}$. Thanks to \eqref{limit_eq} and \eqref{def-A} we get:
	\begin{align}
		\begin{split}
		\mathcal{C}_{av}f(w)&=\int_{E_2}\mathcal{A}f(w,s,v)\pi^w(ds,dv)=\sum_{v\in I}\int_{E_2}\mathcal{A}f(w,s,v)\mu^w_v\pi^w_v(ds)\\
		&= \sum_{v\in I}\int_{E_2}\sum_i\alpha_i(w,v)\delta_0(v^i)\left(\sum_{\tilde{w} \in G_i^w, \tilde{w}\neq w}(f(\tilde{w})-f(w))\varphi^i(s,\tilde{w},w)\right)\mu^w_v\pi^w_v(ds)\\
		&= \sum_{i,j:\ i\neq j}(f(w+E_{ij})-f(w))\left(\sum_{v,v^j=0}\mu^w_v\alpha_j(w,v)\int_{E_2}p^+(s_i)\pi^w_v(ds)\right)\\
		& +\sum_{i,j:\ i\neq j}\mathbb{1}_{]1, +\infty[}(w^{ij})(f(w-E_{ij})-f(w))\left(\sum_{v,v^i=0}\mu^w_v\alpha_i(w,v)\int_{E_2}p^-(s_j)\pi^w_v(ds)\right)
		\end{split}
	\end{align}
	Denoting rate of jump by $r^{+/-}_{ij}(w)$ we get:
	\begin{align}\label{gen-nat}
	\mathcal{C}_{av}f(w)=\sum_{i, j}(f(w+ E_{ij})-f(w))r^+_{ij}(w) + \mathbb{1}_{]1, +\infty[}(w^{ij})(f(w-E_{ij})-f(w))r^-_{ij}(w)
	\end{align}
	\newpage
	\subsection{General conditions for positive recurrence and transience}
	\begin{proposition}\label{prop-pos-rec-strong}
		Assume the following conditions:
		\begin{itemize}
			\item $\exists I_m^{+/-},\ I_M^{+/-}\in \reels_+^* \text{ such that } I_m^{+/-} \leq r^{+/-}_{ij}(w) \leq I_M^{+/-}$ for all $w$,
			\item $I_m^->I_M^+$ which leads to $r^+_{ij}(w)-r^-_{ij}(w)\leq I_M^+-I_m^-<0$ for all $w$
		\end{itemize}
		Then, the process $(W_t)_{t\geq 0}$ associated to the generator $\mathcal{C}_{av}$ given in~\eqref{gen-nat} is positive recurrent.
	\end{proposition}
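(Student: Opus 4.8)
The plan is to apply a Foster--Lyapunov drift criterion for positive recurrence of a continuous-time Markov chain on the countable state space $E_1=\nat_*^{N^2}$ (the Lyapunov methodology for such non-homogeneous chains is exactly the one advocated in~\cite{menshikov2016non}). First I would record the two easy structural facts. The total jump rate out of any $w$ is at most $N(N-1)(I_M^++I_M^-)<\infty$, so the process $(W_t)_{t\geq 0}$ is non-explosive; and since from any state every off-diagonal coordinate can be incremented (rate $\geq I_m^+>0$) and, if it is $>1$, decremented (rate $\geq I_m^->0$), the chain is irreducible on $E_1$. It then suffices to produce a function $V\geq 0$, a finite set $K\subset E_1$ and constants $\varepsilon>0$, $b<\infty$ with $\mathcal{C}_{av}V(w)\leq-\varepsilon$ for $w\notin K$ and $\mathcal{C}_{av}V(w)\leq b$ for $w\in K$.

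The subtlety is that the naive linear choice $V(w)=\sum_{i\neq j}w^{ij}$ fails: on configurations where many coordinates sit at the boundary value $1$ only increments are available, so the drift is positive there, and those configurations (e.g. one coordinate huge, the rest equal to $1$) form an infinite set. Instead I would take the exponential Lyapunov function $V(w)=\sum_{i\neq j}\theta^{w^{ij}}$ with $\theta\in\left(1,\,I_m^-/I_M^+\right)$, the interval being non-empty precisely because of the hypothesis $I_m^->I_M^+$. Since each jump in \eqref{gen-nat} changes exactly one coordinate by $\pm1$, the action of $\mathcal{C}_{av}$ on $V$ splits coordinate by coordinate. For a coordinate with $w^{ij}>1$ the up/down jumps contribute
\[
\theta^{w^{ij}}(\theta-1)\Bigl(r^+_{ij}(w)-\tfrac{1}{\theta}r^-_{ij}(w)\Bigr)\ \leq\ \theta^{w^{ij}}(\theta-1)\Bigl(I_M^+-\tfrac{1}{\theta}I_m^-\Bigr)\ =:\ -c\,\theta^{w^{ij}}(\theta-1),
\]
with $c>0$ by the choice of $\theta$, while a coordinate with $w^{ij}=1$ contributes at most $r^+_{ij}(w)\,\theta(\theta-1)\leq I_M^+\,\theta(\theta-1)$. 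Summing over the $N(N-1)$ off-diagonal coordinates,
\[
\mathcal{C}_{av}V(w)\ \leq\ -c(\theta-1)\!\!\sum_{i\neq j,\ w^{ij}>1}\!\!\theta^{w^{ij}}\ +\ N(N-1)\,I_M^+\,\theta(\theta-1).
\]

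Because $\sum_{i\neq j,\,w^{ij}>1}\theta^{w^{ij}}\geq\theta^{\,m(w)}$ with $m(w)=\max_{i\neq j}w^{ij}$ as soon as $w$ is not the all-ones state, the right-hand side tends to $-\infty$ as $m(w)\to\infty$; hence there is an integer $R$ with $\mathcal{C}_{av}V(w)\leq-1$ whenever some coordinate exceeds $R$. Then $K=\{w\in E_1:w^{ij}\leq R\ \forall i\neq j\}$ is finite and on it $\mathcal{C}_{av}V$ is bounded, so the drift inequality holds and positive recurrence follows. The only genuinely delicate point I anticipate is licensing the substitution of this unbounded $V$ into $\mathcal{C}_{av}$, i.e. checking $V$ lies in the extended domain of the generator; I would settle this by the standard localization argument --- apply the bounded-region martingale identity to $V$ stopped at the exit time of $\{m(w)\leq R'\}$, take expectations, insert the drift bound and let $R'\to\infty$ using non-explosiveness --- rather than by a direct verification.
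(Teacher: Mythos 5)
Your proof is correct and rests on the same overall strategy as the paper's --- a Foster--Lyapunov drift criterion on the countable state space $E_1$, with the reflecting boundary at $w^{ij}=1$ handled by showing its contribution to the drift is uniformly bounded while the contribution of the interior coordinates diverges to $-\infty$. The difference is the choice of Lyapunov function. The paper takes the quadratic $f(w)=\sum_{i\neq j}(w^{ij})^2$: for a coordinate with $w^{ij}>1$ the drift contribution is $2w^{ij}\bigl(r^+_{ij}(w)-r^-_{ij}(w)\bigr)+\bigl(r^+_{ij}(w)+r^-_{ij}(w)\bigr)\leq 2w^{ij}(I_M^+-I_m^-)+I_M^++I_M^-$, so the negative part grows linearly in $\max_{i,j}w^{ij}$ and dominates the bounded boundary terms; this uses the hypothesis $I_M^+-I_m^-<0$ directly and avoids your (correct) objection to the linear function $\sum_{i\neq j}w^{ij}$, whose drift stays bounded and can be positive on the infinite family of states with all but one coordinate equal to $1$. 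Your exponential choice $V(w)=\sum_{i\neq j}\theta^{w^{ij}}$ with $1<\theta<I_m^-/I_M^+$ achieves the same end with a geometrically growing negative part; it is slightly heavier but, if pushed through the corresponding geometric drift theorem, would buy more than the paper's conclusion, namely exponential ergodicity and finite exponential moments of the stationary law. You also make explicit two points the paper leaves implicit in its appeal to Hairer's Proposition 1.3: non-explosiveness and irreducibility of the chain, and the localization argument needed to apply the generator to an unbounded $V$. Both proofs are valid; yours is a legitimate, marginally stronger variant of the same argument.
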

	\begin{proof}
		We use proposition 1.3 from Hairer's course~\cite{hairer_convergence_2010}. In order to check assumptions of this proposition, we need to find a function $f:E_1\rightarrow \reels_+$ such that $\lim_{x\rightarrow +\infty}f(x) = +\infty$ and $\exists A\subset E_1$ finite such that for all  $w\in E_1\backslash A$:
		\begin{align}\label{cond_lyap}
		\mathcal{C}_{av}f(w)\leq -1
		\end{align}
		We define $f:E_1\rightarrow \reels+$ as:
		\begin{align*}
		\forall\ w\in E_1,\ \ f(w)=\sum_{i,j:\ i\neq j}(w^{ij})^2=||w||^2
		\end{align*}
		So
		\begin{align*}
		\mathcal{C}_{av}f(w)&=\sum_{i,j:\ i\neq j}(||w+E_{ij}||^2-||w||^2)r^+_{ij}(w)+\sum_{i,j:\ i\neq j,\ w^{ij}\neq 1}(||w-E_{ij}||^2-||w||^2)r^-_{ij}(w)\\
		&= \sum_{i,j:\ i\neq j}(2w^{ij}+1)r^+_{ij}(w)+\sum_{i,j:\ i\neq j,\ w^{ij}\neq 1}(-2w^{ij}+1)r^-_{ij}(w)\\
		&= \sum_{i,j:\ i\neq j,\ w^{ij}\neq 1}(2w^{ij}+1)r^+_{ij}(w)+\sum_{i,j:\ i\neq j,\ w^{ij}\neq 1}(-2w^{ij}+1)r^-_{ij}(w)+\sum_{i,j:\ i\neq j,\ w^{ij}= 1}(2w^{ij}+1)r^+_{ij}(w)\\
		&= \sum_{i,j:\ i\neq j,\ w^{ij}\neq 1}2w^{ij}(r^+_{ij}(w)-r^-_{ij}(w))+\sum_{i,j:\ i\neq j,\ w^{ij}\neq 1}\left(r^-_{ij}(w)+r^+_{ij}(w)\right)+\sum_{i,j:\ i\neq j,\ w^{ij}= 1}(2w^{ij}+1)r^+_{ij}(w)\\
		&\leq \sum_{i,j:\ i\neq j,\ w^{ij}\neq 1}2w^{ij}(r^+_{ij}(w)-r^-_{ij}(w))+\underbrace{(N^2-\# \{w^{ij}=1\})r^-_{ij}(w)+N^2r^+_{ij}(w)+2\# \{w^{ij}=1\}r^+_{ij}(w)}_{\leq 3N^2(r^+_{ij}(w)+r^-_{ij}(w))\leq 3N^2(I_M^++I_M^-)}
		\end{align*}
		As $r^+_{ij}(w)-r^-_{ij}(w)\leq I_M^+-I_m^-<0$ for all $w$ such that $||w||>N$ (to enforce that at least one $w^{kl}>1$):
		\begin{align*}
		\mathcal{C}_{av}f(w)\leq 2\max_{i,j:\ i\neq j}(w^{ij})(I_M^+-I_m^-)+3N^2((I_M^++I_M^-))\underset{||w||\rightarrow +\infty}{\longrightarrow} -\infty
		\end{align*}
		Let $w_{sep}\in \nat_+^*$ be such that
		\[w'\geq w_{sep}\ \  \Rightarrow\ \  2w'(I_M^+-I_m^-)+3N^2(I_M^++I_M^-)\leq -1\]
		As $\max_{i,j}(w^{ij})\geq \frac{||w||}{N}$, we define $A_0=\{w,\ ||w||> Nw_{sep}\}$ so:
		\[w\in A_0\ \  \Rightarrow\ \  \max_{i,j}(w^{ij})\geq w_{sep}\ \ \Rightarrow\ \ 2\max_{i,j}(w^{ij})(I_M^+-I_m^-)+3N^2((I_M^++I_M^-))\leq -1\]
		Let $A= A_0^c=\{w,\ ||w||\leq Nw_{sep}\}$. $A$ is finite and for all $w\in E_1\backslash A$:
		
		\begin{align*}
		\mathcal{C}_{av}f(w)\leq -1
		\end{align*}
		Which proves, by proposition 1.3 from Hairer's course~\cite{hairer_convergence_2010}, positive recurrence of $(W_t)_{t\geq 0}$.
	\end{proof}
	\begin{corollaire}\label{cor-sup}
		If $\lim_{r\rightarrow +\infty}\sup_{w\in \Sigma,\lVert w\rVert\geq r}(r^+_{ij}(w)-r^-_{ij}(w)) < 0$\\
		Then, the process $(W_t)_{t\geq 0}$ associated to the generator $\mathcal{C}_{av}$ given in~\eqref{gen-nat} is positive recurrent.
	\end{corollaire}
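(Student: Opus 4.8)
The plan is to run the same Foster--Lyapunov argument as in the proof of Proposition~\ref{prop-pos-rec-strong}, with the identical test function $f(w)=\lVert w\rVert^2=\sum_{i\neq j}(w^{ij})^2$, and to check that the weaker hypothesis here still forces $\mathcal{C}_{av}f(w)\leq -1$ outside a finite set. The one ingredient of Proposition~\ref{prop-pos-rec-strong} that is now missing, namely the upper bounds $r^{+/-}_{ij}(w)\leq I_M^{+/-}$, is in fact automatic: reading off $r^{+/-}_{ij}(w)$ from~\eqref{gen-nat} one has, using $p^{+/-}\leq 1$ (by~\eqref{proba-poids}), Assumption~\ref{ass-jump-rate}, and the fact that $\mu^w$ is a probability and $\pi^w_v$ a sub-probability measure, that $r^{+}_{ij}(w)\leq\alpha_M$ and $r^{-}_{ij}(w)\leq\alpha_M$ for every $w$. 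So only the negative-drift hypothesis needs to be weakened, which is exactly what the corollary does.

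Concretely, I would first extract from $\lim_{r\to+\infty}\sup_{\lVert w\rVert\geq r}\bigl(r^+_{ij}(w)-r^-_{ij}(w)\bigr)<0$ two constants $\delta>0$ and $R_0$ such that $r^+_{ij}(w)-r^-_{ij}(w)\leq-\delta$ for all $w$ with $\lVert w\rVert\geq R_0$ and all $i\neq j$ (there are finitely many pairs, so a common $\delta,R_0$ works, whether the statement is read for each fixed pair or with a $\max$ over pairs). Then I would reproduce verbatim the computation of $\mathcal{C}_{av}f$ from the proof of Proposition~\ref{prop-pos-rec-strong}, which gives
\[
\mathcal{C}_{av}f(w)\leq\sum_{i\neq j,\ w^{ij}\neq1}2w^{ij}\bigl(r^+_{ij}(w)-r^-_{ij}(w)\bigr)+6N^2\alpha_M,
\]
the constant term being bounded this way thanks to the uniform bound $r^{+/-}_{ij}(w)\leq\alpha_M$ noted above. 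For $\lVert w\rVert\geq\max(R_0,N)$ every bracket on the right is $\leq-\delta$, and the coordinate achieving $\max_{i,j}w^{ij}\geq\lVert w\rVert/N$ is then $\neq 1$, so the sum is $\leq-2\delta\max_{i,j}w^{ij}\leq-\tfrac{2\delta}{N}\lVert w\rVert$. Hence $\mathcal{C}_{av}f(w)\leq-\tfrac{2\delta}{N}\lVert w\rVert+6N^2\alpha_M\to-\infty$, so there is $r_1$ with $\mathcal{C}_{av}f(w)\leq-1$ once $\lVert w\rVert\geq r_1$, and with $A=\{w\in E_1:\lVert w\rVert<\max(R_0,N,r_1)\}$ finite, Proposition~1.3 of~\cite{hairer_convergence_2010} gives positive recurrence, exactly as before.

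Everything here is routine except the observation that the jump rates are uniformly bounded above: this is the step I would be most careful to state cleanly, since it is what replaces the hypothesis $r^{+/-}_{ij}(w)\leq I_M^{+/-}$ and is what lets the $O(\lVert w\rVert)$ negative drift dominate the bounded fluctuation term. A secondary point worth one sentence is the reading of the supremum over pairs $(i,j)$, handled by finiteness of the index set as above. Note that no estimate on the Laplace transforms $\mathcal{L}(\pi^w_v)$ is required, since only the trivial bounds $0\leq\int p^{\pm}(s)\,\pi^w_v(ds)\leq 1$ enter.
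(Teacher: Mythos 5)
Your proposal is correct and follows essentially the same route as the paper, whose proof of the corollary is simply the statement that the argument of Proposition~\ref{prop-pos-rec-strong} carries over verbatim. Your additional observation that the upper bounds $r^{+/-}_{ij}(w)\leq\alpha_M$ come for free from $p^{\pm}\leq 1$, Assumption~\ref{ass-jump-rate} and the fact that $\mu^w$ and $\pi^w_v$ are probability measures is exactly the detail needed to justify dropping the hypothesis $r^{+/-}_{ij}(w)\leq I_M^{+/-}$, which the paper leaves implicit.
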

	\begin{proof}
		Exactly the same as the proof of proposition~\ref{prop-pos-rec-strong}.
	\end{proof}
	
	\begin{proposition}
		$p^+(s)-p^-(s)>\gamma>0$ for all $s\in \reels_+^*$ imply transience of $(W_t)_{t\geq 0}$.
	\end{proposition}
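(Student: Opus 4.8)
The plan is to produce a bounded, non-constant superharmonic function for the weight chain and to conclude by the classical Lyapunov criterion for transience: an irreducible, non-explosive continuous-time Markov chain on a countable state space is transient whenever there are a bounded $g:E_1\to\reels_+$ and a finite set $A\subset E_1$ with $\mathcal{C}_{av}g(w)\le 0$ for all $w\in E_1\setminus A$ and $g(w_0)<\min_{w\in A}g(w)$ for some $w_0\notin A$ (see e.g.~\cite{menshikov2016non}; concretely one runs the bounded supermartingale $g(W_{t\wedge\tau_A})$ through optional stopping to obtain $\proba_{w_0}(\tau_A<\infty)\le g(w_0)/\min_A g<1$, which an irreducible recurrent chain on a finite set $A$ cannot satisfy). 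Before starting I would note that $(W_t)_{t\ge 0}$ is non-explosive, since its total jump rate at $w$, namely $\sum_{i\neq j}\bigl(r^+_{ij}(w)+\mathbb{1}_{]1,\infty[}(w^{ij})r^-_{ij}(w)\bigr)$, is bounded by $2N(N-1)\alpha_M$, and it is irreducible, because $r^+_{ij}(w)>0$ and $r^-_{ij}(w)>0$ for every $w$ (each carries $\mu^w_v>0$, a rate $\ge\alpha_m$, and $p^{\pm}>0$).

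I would take $g(w)=\rho^{\lVert w\rVert_1}$ with $\lVert w\rVert_1=\sum_{i\neq j}w^{ij}$ and $\rho\in(0,1)$ to be chosen. Using $g(w\pm E_{ij})-g(w)=\rho^{\lVert w\rVert_1}(\rho^{\pm 1}-1)$ in~\eqref{gen-nat}, one computes
\[
\mathcal{C}_{av}g(w)=\rho^{\lVert w\rVert_1}(1-\rho)\Bigl(\rho^{-1}R^-(w)-R^+(w)\Bigr),\qquad R^{+}(w)=\sum_{i\neq j}r^+_{ij}(w),\quad R^{-}(w)=\sum_{\substack{i\neq j\\ w^{ij}>1}}r^-_{ij}(w).
\]
Since $\rho^{-1}-1>0$ one may bound $R^-(w)\le\widetilde R^-(w):=\sum_{i\neq j}r^-_{ij}(w)$, so it suffices to pick $\rho$ with $\rho^{-1}\widetilde R^-(w)\le R^+(w)$, i.e.\ $\rho\ge \widetilde R^-(w)/R^+(w)$, uniformly in $w$.

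The key step is then to bound $\widetilde R^-(w)/R^+(w)$ away from $1$ uniformly, and this is exactly where the hypothesis on $p^{\pm}$ enters. Relabelling $i\leftrightarrow j$ inside $\widetilde R^-$ (recall $r^+_{ij}$ carries the factor $\alpha_j(w,v)\!\int p^+(s_i)\,d\pi^w_v$ whereas $r^-_{ij}$ carries $\alpha_i(w,v)\!\int p^-(s_j)\,d\pi^w_v$) gives
\[
R^+(w)-\widetilde R^-(w)=\sum_{i\neq j}\ \sum_{v:\,v^j=0}\mu^w_v\,\alpha_j(w,v)\int_{\reels_+^N}\bigl(p^+(s_i)-p^-(s_i)\bigr)\,\pi^w_v(ds).
\]
Since the stationary law $\pi^w_v$ gives zero mass to $\{s_i=0\}$ (at a fixed time the jump instants of neuron $i$ form a null set, and $\pi^w_v$ is a probability measure), the assumption $p^+(s)-p^-(s)>\gamma>0$ on $\reels_+^*$ forces each inner integral to exceed $\gamma$; combined with $\alpha_j(w,v)\ge\alpha_m$, with $\mu^w_v\ge\mu_{\min}>0$ uniformly in $w$ (the $V$-jump rates lie in the compact set $[\alpha_m,\alpha_M]$ on which $\mu^w$ depends continuously and stays strictly positive, cf.~Assumption~\ref{ass-jump-rate} and the positivity of $\mu^w$ noted in the uniqueness proof), and with the elementary state counts, this yields $R^+(w)-\widetilde R^-(w)\ge c_0:=\gamma\,\alpha_m\,\mu_{\min}\,2^{N-1}N(N-1)>0$. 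On the other hand $R^+(w)\le C_1:=\alpha_M A_+\,N(N-1)$, because $p^+\le A_+$ and $\sum_v\mu^w_v=1$. Hence $\widetilde R^-(w)/R^+(w)\le 1-c_0/C_1<1$ for every $w$, so any fixed $\rho\in(0,1)$ with $\rho\ge 1-c_0/C_1$ gives $\mathcal{C}_{av}g\le 0$ on all of $E_1$. Finally I would take $A=\{w:\lVert w\rVert_1\le R\}$ (finite) for $R$ large, note $\min_{w\in A}g(w)=\rho^R$ while $g(w_0)=\rho^{\lVert w_0\rVert_1}<\rho^R$ for any $w_0\notin A$, and invoke the criterion above to conclude transience of $(W_t)_{t\ge 0}$.

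The part I expect to be the main obstacle is the uniform-in-$w$ control of $\mu^w$ and $\pi^w_v$: once one knows that $\mu^w_v$ is bounded below away from $0$ independently of $w$ and that $\pi^w_v(\{s_i=0\})=0$, the argument collapses to a one-line computation, but these two facts rest only on the abstract existence/uniqueness results of the previous sections and must be argued carefully. The relabelling identity and the bounds $r^+_{ij}\le\alpha_M A_+$, $r^-_{ij}\le\alpha_M A_-$ are routine.
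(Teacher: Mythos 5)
Your proof is correct and follows essentially the same route as the paper: both arguments exhibit a bounded positive function decreasing in the total weight $\sum_{i\neq j}w^{ij}$, use the relabelling of $r^-_{ij}$ as $r^-_{ji}$ to turn the drift into an integral of $p^+-p^-$ against $\pi^w_v$ (which the hypothesis makes uniformly positive), and conclude by the Foster--Lyapunov transience criterion of \cite{menshikov2016non}. The only real difference is your choice $\rho^{\lVert w\rVert_1}$ in place of the paper's $1/\sum w^{ij}$, which makes the drift computation exact and quantifies the margin via $R^+-\widetilde R^-\geq c_0$ (for the uniform lower bound on $\sum_{v:v^j=0}\mu^w_v$ you could simply invoke the paper's Proposition~\ref{ine-mu} rather than a compactness argument).
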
	
	\begin{proof}
		Let define 
		\[A=\{w\in E_1 \text{ s.t. }\min w^{ij}\leq w'>1\}\]
		And $f: E_1\rightarrow \reels_+$ such that:
		\begin{align*}
		f(x) = \left\{
		\begin{array}{rcr}
		\frac{1}{N^2w'} & \text{ if } x\in A,\\\\
		\frac{1}{\sum x^{ij}} & \text{ if } x\in A^c \\
		\end{array}
		\right.
		\end{align*}
		Thus, $\inf_A f=\frac{1}{N^2w'}$ so for all $w\in A^c$, $f(w)<\inf_A f$.\\
		Moreover,
		\begin{align*}
		\begin{split}
		\mathcal{C}_{av}f(w)
		&=\sum_{i, j}(f(w+ E_{ij})-f(w))r^+_{ij}(w) + \mathbb{1}_{]1, +\infty[}(w^{ij})(f(w-E_{ij})-f(w))r^-_{ij}(w)\\
		&= \sum_{i, j} \frac{-1}{w(w+1)} r^+_{ij}(w) + \mathbb{1}_{]1, +\infty[}(w^{ij})\frac{1}{w(w-1)}r^-_{ij}(w)\\
		&\leq \sum_{i, j} \frac{-1}{w(w+1)} r^+_{ij}(w) + \frac{1}{w(w-1)}r^-_{ji}(w)\leq\sum_i\sum_{v,v^i=0}\mu^w_v\alpha_i(w,v)\sum_{j\neq i}\frac {1}{w(w+1)}\int_{E_2}(p^-(s_j)-p^+(s_j))\pi^w_v(ds)\\
		&<-\gamma\frac {1}{w(w+1)}\sum_i\sum_{v,v^i=0}\mu^w_v\alpha_i(w,v)\leq 0
		\end{split}
		\end{align*}
		We can apply theorem 2.5.8 of~\cite{menshikov2016non} to prove transience of the process.
	\end{proof}
	Surprisingly, it is not true that $p^+(s)-p^-(s)<-\gamma<0$ for all $s\in \reels_+^*$ imply positive recurrence of $(W_t)_{t\geq 0}$ as we showed in simulations.
	\begin{remarque}\label{rem-eta}
		Denoting by $\eta(w)$ the expectation of jumps of $(W_t)_{t\geq 0}$, we easily get that $\eta^{ij}(w)=(r^+_{ij}(w)-r^-_{ij}(w))\Delta w$. Thus, conditions on $(r^+_{ij}(w)-r^-_{ij}(w))$ are equivalent to conditions on $\eta^{ij}(w)$.
	\end{remarque}
	We now compute the constants $I_m^{+/-},\ I_M^{+/-}\in \reels_+^*$ in order to derive a simple condition of transience or recurrence depending on parameters.
	\subsection{A simple condition on parameters for positive recurrence}
	We want to bound the following quantities:
	\[r_{ij}^{+/-}(w)=\sum_{v,v^j=0}\mu^w_v\alpha_j(w,v)\int_{E_2}p^{+/-}(s_i)\pi^w_v(ds)\]
	The main idea is to use that $0<\alpha_m \leq \alpha_j(w,v)\leq \alpha_M$ so
	\[\alpha_m\int_{E_2}p^+(s_i)\left(
	\sum_{v,v^j=0}\mu^w_v\pi^w_v(ds)\right) \leq \sum_{v,v^j=0}\mu^w_v\alpha_j(w,v)\int_{E_2}p^+(s_i)\pi^w_v(ds) \leq \alpha_M\int_{E_2}p^+(s_i)\left(\sum_{v,v^j=0}\mu^w_v\pi^w_v(ds)\right)\]
	The quantity to bound is now
	\[\int_{E_2}p^+(s_i)\left(\sum_{v,v^j=0}\mu^w_v\pi^w_v(ds)\right)\]
	But for all differentiable $p^+$, by Fubini:
	\begin{align*}
	\int_{E_2}p^+(s_i)\pi^w_v(ds)&=\int_{E_2}\left(\int_{0}^{s_i}(p^+)'(u)du+p^+(0)\right)\pi^w_v(ds)\\
	&= p^+(0) + \int_{E_2}\left(\int_{0}^{+\infty}(p^+)'(u)\mathbb{1}_{\{u<s_i\}}du\right)\pi^w_v(ds)\\
	&= p^+(0) + \int_{0}^{+\infty}\left(\int_{E_2}\mathbb{1}_{\{s_i<u\}}\pi^w_v(ds)\right)(p^+)'(u)du\\
	&= p^+(0) + \int_{0}^{+\infty}\proba_{\pi^w}\left(S_t^i>u|V_t=v\right)(p^+)'(u)du
	\end{align*}
	We are finally interested in bounding 
	\[
	\sum_{v,v^j=0}\mu^w_v\proba_{\pi^w}\left(S_t^i>u|V_t=v\right)=\sum_{v,v^j=0}\proba_{\pi^w}\left(V_t=v\right)\proba_{\pi^w}\left(S_t^i>u|V_t=v\right)=\proba_{\pi^w}\left(S_t^i>u,V^j_t=0\right)
	\]
	\begin{proposition}\label{prop-born-taux}
		For all $w\in E_2$:
		\begin{align}\label{born-pi}
		\frac{\alpha_M^2 e^{-\beta u}-\beta^2 e^{-\alpha_M u}}{\alpha_M^2 - \beta^2}\frac{\beta}{\alpha_M+\beta}\leq \proba_{\pi^w}\left(S_t^i>u,V^j_t=0\right) \leq \frac{\alpha_m^2 e^{-\beta u}-\beta^2 e^{-\alpha_m u}}{\alpha_m^2 - \beta^2}\frac{\beta}{\alpha_m+\beta}
		\end{align}
	\end{proposition}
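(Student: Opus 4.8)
The probability in \eqref{born-pi} involves only the marginal process $(V_t)$, since the jump rates depend on $v$ and $w$ but not on $s$, and $\{S_t^i>u\}$ is exactly the event that neuron $i$ performs no $0\to1$ jump on the window $(t-u,t]$. So the plan starts by using stationarity to shift this window to $(0,u]$ and to start $V_0$ from $\mu^w$ (the unique invariant law of $(V_t)$, by Theorem~\ref{theo-inv-mes}): it suffices to bound $\proba_{\mu^w}\big(\text{no }0\to1\text{ jump of }i\text{ on }(0,u],\ V_u^j=0\big)$. The idea is then to sandwich this quantity between the corresponding quantities for the two constant‑rate neuron systems $\underline V$ and $\overline V$ in which every neuron has up‑rate $\alpha_m$, resp.\ $\alpha_M$, and down‑rate $\beta$.

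In those constant‑rate systems the $N$ neurons are independent $2$‑state chains, so (using $i\ne j$) the probability factorises as $\proba(\text{no }0\to1\text{ of }i)\cdot\proba(V_u^j=0)$. The second factor is the stationary probability $\beta/(\alpha+\beta)$. For the first I would split on the initial state of neuron $i$: started at $0$ it must stay at $0$ (probability $e^{-\alpha u}$); started at $1$ it either stays at $1$ (probability $e^{-\beta u}$) or jumps down once at some time $\tau<u$ — the down‑time being exactly $\mathrm{Exp}(\beta)$ because the down‑rate is the constant $\beta$ — and then stays at $0$ for the remaining $u-\tau$, which integrates to $e^{-\beta u}+\frac{\beta}{\alpha-\beta}(e^{-\beta u}-e^{-\alpha u})$. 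Weighting these by the stationary weights $\beta/(\alpha+\beta)$ and $\alpha/(\alpha+\beta)$ and simplifying gives $\frac{\alpha^2e^{-\beta u}-\beta^2e^{-\alpha u}}{\alpha^2-\beta^2}$ (interpreted by continuity if $\alpha=\beta$), hence the product $\frac{\alpha^2e^{-\beta u}-\beta^2e^{-\alpha u}}{\alpha^2-\beta^2}\cdot\frac{\beta}{\alpha+\beta}$; this is decreasing in $\alpha$, so $\alpha=\alpha_M$ yields the lower bound of \eqref{born-pi} and $\alpha=\alpha_m$ the upper one.

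To transfer these to the true process I would construct $\underline V$, $V$ and $\overline V$ on one probability space: the three share a rate‑$\beta$ down‑clock for each neuron (legitimate since the down‑rate is always exactly $\beta$), and the up‑clock of neuron $k$ is a single rate‑$\alpha_M$ Poisson clock accepted with probability $\alpha_m/\alpha_M$, $\alpha_k(w,V_{s^-})/\alpha_M$, $1$ respectively, the three accept/reject decisions being coupled through a common uniform mark so as to be nested. Since each $\xi_i$ is non‑decreasing the dynamics are attractive (the up‑rate $\xi_k(\sum_l w^{lk}v^l)$ is monotone in $v$, the down‑rate constant), so $\mu^w$ stochastically dominates $\bigotimes_k\mathrm{Ber}\!\big(\tfrac{\alpha_m}{\alpha_m+\beta}\big)$ and is dominated by $\bigotimes_k\mathrm{Ber}\!\big(\tfrac{\alpha_M}{\alpha_M+\beta}\big)$; starting the three stationary copies from a monotone coupling of these laws, the construction preserves $\underline V_s\preceq V_s\preceq\overline V_s$ coordinatewise for all $s$, in particular $V_u^j=0\Rightarrow\underline V_u^j=0$ and $\overline V_u^j=0\Rightarrow V_u^j=0$.

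The subtle point — which I expect to be the real work — is that $\{\text{no }0\to1\text{ jump of }i\text{ on }(0,u]\}$ is \emph{not} a monotone function of the state: a neuron with a smaller up‑rate sits at $0$ more of the time and so has \emph{more} occasions to fire, so the coordinatewise ordering above does not by itself order the $i$‑spike counts. To handle this I would run the comparison on the augmented chain $\big(V_s,\ \mathbbm{1}\{i\text{ has fired on }(0,s]\}\big)$ — equivalently, at the level of the sub‑Markovian semigroups obtained by killing at the first $0\to1$ jump of neuron $i$ — and show, using the coupling up to the killing time together with a monotonicity/Kolmogorov‑equation comparison for the killed semigroups, that the joint survival‑and‑$\{V^j=0\}$ probability is monotone in the common up‑rate. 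Once this monotone comparison is established, \eqref{born-pi} follows by evaluating the reference quantity at $\alpha=\alpha_m$ and $\alpha=\alpha_M$.
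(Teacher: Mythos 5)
Your proposal follows the same route as the paper: sandwich the true dynamics between two systems of independent two-state neurons with constant up-rates $\alpha_m$ and $\alpha_M$ and common down-rate $\beta$, compute the stationary law of the age $S^i$ for such a constant-rate neuron explicitly, and bound $\proba_{\pi^w}\left(V^j_t=0\right)$ separately via Proposition~\ref{ine-mu}. The only real difference of technique is in the single-neuron computation: you obtain $\proba\left(S_{\infty}>u\right)=\frac{\alpha^2e^{-\beta u}-\beta^2e^{-\alpha u}}{\alpha^2-\beta^2}$ by conditioning on the initial state and integrating over the down-jump time (a renewal argument), whereas the paper derives the density of the invariant measure by inverting its Laplace transform (Proposition~\ref{laplace1}); both give the same expression and your version is, if anything, more elementary.

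The substantive point is the comparison step, and there your diagnosis is correct: $\{S^i_t>u\}$ is not a monotone functional of the trajectory, and the naive coordinatewise coupling does not order the ages. Concretely, if the fast copy sits in state $1$ while the true neuron $i$ is in state $0$ with a large current age, a firing of the true neuron resets $S^i_t$ to $0$ below the fast copy's age, breaking the claimed pathwise bound $\overline{S^i_t}\le S^i_t$. This is exactly the step the paper glosses over: it asserts the pathwise ordering ``for similar trajectories'' without constructing a coupling, and it also factorizes $\proba_{\pi^w}\left(\overline{S^i_t}>u,V^j_t=0\right)$ into a product without justifying why a coordinate of the comparison system should be independent of the true $V^j_t$. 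So you have located the genuine weak point of the argument --- but your proposal does not close it either: the killed-semigroup / augmented-chain comparison is only announced (``I would run the comparison\dots and show\dots''), and proving that the joint survival-and-$\{V^j_u=0\}$ probability is monotone in the up-rate is precisely the missing lemma. (A possible alternative is to work with the inter-spike interval of neuron $i$, which is exactly an $\mathrm{Exp}(\beta)$ sojourn in state $1$ followed by a sojourn in state $0$ whose hazard lies in $[\alpha_m,\alpha_M]$, and to compare stationary ages from there; but the intervals are not i.i.d., so the length-biasing inherent in the stationary age still has to be controlled.) Until such a lemma is actually proved, your argument has a gap at the same place where the paper's own proof is only heuristic.
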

	Let $(\overline{V_t},\overline{S_t})$ and $(\underline{V_t},\underline{S_t})$ be the processes for which $\left((\overline{V_t^i},\overline{S_t^i})_{t\geq 0}\right)_i$ (the same for $\left((\underline{V_t^i},\underline{S_t^i})_{t\geq 0}\right)_i$) are independent each other and neurons jump from 0 to 1 respectively with a rate $\alpha_M$ and $\alpha_m$ and from 1 to 0 with the rate $\beta$. We thus get for similar trajectories, for all $t\geq 0$ and all $i$: 
	\[\overline{S_t^i} \leq S_t^i \leq \underline{S_t^i}\]
	
	Thus, we can bound $\proba_{\pi^w}\left(S_t^i>u|V_t=v\right)$ as follows:
	\[\proba_{\pi^w}\left(\overline{S_t^i}>u,V^j_t=0\right) \leq \proba_{\pi^w}\left(S_t^i>u,V^j_t=0\right) \leq \proba_{\pi^w}\left(\underline{S_t^i}>u,V^j_t=0\right)\]
	So
	\begin{align}\label{ineg-mes-inv}
	\proba_{\pi^w}\left(\overline{S_t^i}>u\right)\proba_{\pi^w}\left(V^j_t=0\right)\leq \proba_{\pi^w}\left(S_t^i>u,V^j_t=0\right) \leq \proba_{\pi^w}\left(\underline{S_t^i}>u\right)\proba_{\pi^w}\left(V^j_t=0\right)
	\end{align}
	First, let bound $\proba_{\pi^w}\left(V^j_t=0\right)=\sum_{v,v^j=0}$.
	\begin{proposition}\label{ine-mu}
		For all $i$, $w$:
		\begin{align}
		\frac{\beta}{\alpha_M+\beta}\leq \sum_{v,v^i=0}\mu^w_v\leq\frac{\beta}{\alpha_m+\beta}
		\end{align}
	\end{proposition}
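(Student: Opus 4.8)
The plan is to read off the claim from a single balance equation for the invariant measure $\mu^w$ of the pure jump chain $(V_t)_{t\geq 0}$ generated by $\mathcal{B}_0$ (which exists, is unique, and has full support by the argument already given in the uniqueness section). First I would test the stationarity relation $\sum_{k} \mathcal{B}_0 g(\vec v_k)\,\mu^w_k = 0$ against the bounded, $I$-measurable function $g(v) = \delta_1(v^i)$, the indicator that neuron $i$ is spiking. Since $g$ depends only on the $i$-th coordinate, every term with summation index $k\neq i$ in \eqref{def_B_0} contributes $g(v\pm e_k)-g(v)=0$, and the $k=i$ term collapses to
\[
\mathcal{B}_0 g(v) \;=\; \alpha_i(w,v)\,\delta_0(v^i) \;-\; \beta\,\delta_1(v^i).
\]

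Integrating against $\mu^w$ and using invariance then yields the exact detailed-balance-type identity
\[
\beta\,\proba_{\mu^w}(V^i=1) \;=\; \sum_{v:\, v^i=0} \alpha_i(w,v)\,\mu^w_v .
\]
Writing $p := \proba_{\mu^w}(V^i=0) = \sum_{v:\,v^i=0}\mu^w_v$ so that $\proba_{\mu^w}(V^i=1)=1-p$, and then bounding the right-hand side using Assumption~\ref{ass-jump-rate}, i.e. $\alpha_m\, p \leq \sum_{v:\,v^i=0}\alpha_i(w,v)\,\mu^w_v \leq \alpha_M\, p$, I obtain the chain of inequalities $\alpha_m\, p \leq \beta(1-p) \leq \alpha_M\, p$. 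Solving the right inequality for $p$ gives $p \geq \beta/(\alpha_M+\beta)$, and solving the left one gives $p \leq \beta/(\alpha_m+\beta)$, which is precisely the asserted two-sided bound.

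There is essentially no obstacle: the only points requiring a word of care are that $g=\delta_1(\cdot^i)$ lies in the class on which $\mathcal{B}_0$ acts (it is bounded and $I$-measurable, and $I$ is finite, so this is immediate), and that one should use a summation letter distinct from the fixed index $i$ when expanding \eqref{def_B_0}. Everything past the balance identity is elementary algebra, so I expect this proposition to be one of the shortest in the paper.
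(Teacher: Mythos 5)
Your proof is correct and follows essentially the same route as the paper: the paper tests the stationarity relation against $g_i(v)=\delta_0(v^i)$ rather than $\delta_1(v^i)$, which yields the same balance identity up to sign, and then performs the identical bounding of $\alpha_i(w,v)$ by $\alpha_m$ and $\alpha_M$. No gap.
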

	\begin{proof}
		Let recall from~\eqref{def_B_0} the generator of the process of neurons ($V_t$) only when $w$ is fixed jump:
		\begin{align*}
		\mathcal{B}_0g(v) =\sum_{i=1}^{N}\beta\delta_1(v^i)[g(v-e_i)-g(v)]+\alpha_i(w,v)\delta_0(v^i)\left[g(v+e_i)-g(v)\right]
		\end{align*}
		Which gives for the invariant measure $\mu^w=(\mu_v^w)_{v\in I}$:
		\begin{align*}
		\sum_{v\in I}\mathcal{B}_0g(v)\mu_v^w = 0
		\end{align*}
		Thus, let $i\in [\![1,N]\!]$ with $g_i(v)=\delta_0(v^i)$, we get:
		\begin{align*}
		0=\sum_{v\in I}\mathcal{B}_0g_i(v)\mu_v^w &= \sum_{v\in I}\mu_v^w\sum_{j=1}^{N}\beta\delta_1(v^j)[g_i(v-e_j)-g_i(v)]+\alpha_j(w,v)\delta_0(v^j)\left[g_i(v+e_j)-g_i(v)\right]\\
		&= \sum_{v\in I,v^i=0}\mu_v^w\sum_{j=1}^{N}\beta\delta_1(v^j)[g_i(v-e_j)-g_i(v)]+\alpha_j(w,v)\delta_0(v^j)\left[g_i(v+e_j)-g_i(v)\right]\\
		&\sum_{v\in I,v^i=1}\mu_v^w\sum_{j=1}^{N}\beta\delta_1(v^j)[g_i(v-e_j)-g_i(v)]+\alpha_j(w,v)\delta_0(v^j)\left[g_i(v+e_j)-g_i(v)\right]\\
		&= \sum_{v\in I,v^i=0}\mu_v^w(-\alpha_i(w,v))+\sum_{v\in I,v^i=1}\mu_v^w\beta
		\end{align*}
		Indeed, when $v^i=0$, for all $j\neq i$ one has $g_i(v-e_j)-g_i(v)=g_i(v+e_j)-g_i(v)=1-1=0$ and $g_i(v+e_i)-g_i(v)=0-1=-1$, $\delta_1(v^i)=0$. Doing the same reasoning with $v^i=1$ we get the last line. Then, we also know that  $\sum_{v\in I,v^i=0}\mu_v^w+\sum_{v\in I,v^i=1}\mu_v^w=1$ so:
		\begin{align*}
		0&=\sum_{v\in I,v^i=0}\mu_v^w(-\alpha_i(w,v))+\sum_{v\in I,v^i=1}\mu_v^w\beta\\
		&=\sum_{v\in I,v^i=0}\mu_v^w(-\alpha_i(w,v))+(1-\sum_{v\in I,v^i=0}\mu_v^w)\beta\\
		&=\beta-(\beta\sum_{v\in I,v^i=0}\mu_v^w+\sum_{v\in I,v^i=0}\mu_v^w\alpha_i(w,v))
		\end{align*}
		Finally,
		\begin{align*}
		(\beta+\alpha_m)\sum_{v\in I,v^i=0}\mu_v^w\leq(\beta\sum_{v\in I,v^i=0}\mu_v^w+\sum_{v\in I,v^i=0}\mu_v^w\alpha_i(w,v))\leq (\beta+\alpha_M)\sum_{v\in I,v^i=0}\mu_v^w
		\end{align*}
		We conclude that
		\begin{align}\label{born-mu}
		\frac{\beta}{\alpha_M+\beta}\leq \sum_{v,v^i=0}\mu^w_v\leq\frac{\beta}{\alpha_m+\beta}
		\end{align}
	\end{proof}

	We now focus our interest on computations of $\proba_{\pi^w}\left(\underline{S_t^i}>u\right)$ and $\proba_{\pi^w}\left(\overline{S_t^i}>u\right)$.\\
	
	It is interesting to note that the previous inequality holds for all $t \geq 0$. We already showed in theorem~\ref{theo-inv-mes} that each of $(\underline{S_t^i},\underline{V_t^i})$ and $(\overline{S_t^i},\overline{V_t^i})$ possesses a unique invariant measure $(\underline{S_{\infty}},\underline{V_{\infty}})$ and $(\overline{S_{\infty}},\overline{V_{\infty}})$. Therefore, as~\eqref{ineg-mes-inv} is true for all $t\geq 0$, we get:
	\begin{align}\label{born-infty}
	\proba\left(\overline{S_{\infty}}>u\right)\proba_{\pi^w}\left(V^j_t=0\right)\leq \proba_{\pi^w}\left(S_t^i>u,V^j_t=0\right) \leq \proba\left(\underline{S_{\infty}}>u\right)\proba_{\pi^w}\left(V^j_t=0\right)
	\end{align}
	
	We turn on the computing of measures of $(\underline{S_{\infty}},\underline{V_{\infty}})$ and $(\overline{S_{\infty}},\overline{V_{\infty}})$ from their Laplace transforms. To do so, we study the process $(S_t,V_t)\in \reels+ \times \{0,1\}$ with the following generator $(\mathcal{A},D(\mathcal{A}))$:
	\begin{align}
	\mathcal{A}f(s,v) = \beta \delta_1(v)\left(f(s,0)-f(s,1)\right)+\alpha \delta_0(v)\left(f(0,1)-f(s,0)\right)+\partial _sf(s,v)
	\end{align}	
	\begin{proposition}\label{laplace1}
		The invariant probability measure $\pi(ds,v)$ of $(S_t,V_t)$ is:
		\begin{align}
		\pi (ds,v)= \frac{\alpha\beta}{\alpha-\beta} (e^{-\beta s}-e^{-\alpha s})ds\mu_0\mathbb{1}_0(v)+\beta e^{-\beta s}ds \mu_1\mathbb{1}_1(v)
		\end{align}
	\end{proposition}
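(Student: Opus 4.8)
The plan is to use the generator characterisation of invariant measures. By Proposition~\ref{prop-inv-mes-gen}, a probability measure $\pi$ on $\reels_+\times\{0,1\}$ is invariant for $(S_t,V_t)$ if and only if $\int \mathcal{A}f\,d\pi=0$ for every $f$ in the domain. Moreover this process is the $N=1$ instance of the one in Theorem~\ref{theo-inv-mes}, with the constant rates $\alpha$ and $\beta$ in place of $\alpha_i(w,v)$ and $\beta$ (so Assumption~\ref{ass-jump-rate} holds), hence it already possesses a \emph{unique} invariant probability measure; it therefore suffices to exhibit one $\pi$ satisfying the identity and it will automatically be the one.

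First I would pin down the $v$-marginal. Testing $\int\mathcal{A}f\,d\pi=0$ against functions $f$ depending on $v$ only kills the transport term $\partial_s f$ and leaves $(g(1)-g(0))(\alpha\mu_0-\beta\mu_1)=0$, where $\mu_v:=\pi(\reels_+\times\{v\})$; as $g$ is arbitrary, $(\mu_0,\mu_1)$ must be the invariant law of the two-state jump chain, i.e. $\mu_0=\beta/(\alpha+\beta)$ and $\mu_1=\alpha/(\alpha+\beta)$. Thus I can write $\pi(ds,v)=\mu_0\,\pi_0(ds)\mathbb{1}_0(v)+\mu_1\,\pi_1(ds)\mathbb{1}_1(v)$ for two probability measures $\pi_0,\pi_1$ on $\reels_+$, and the problem reduces to identifying $\pi_0$ and $\pi_1$.

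To get them I would mimic the Laplace-transform step of the general uniqueness proof, plugging $f(s,v)=e^{-\lambda s}g(v)$ (with $\lambda\ge0$ and $g$ arbitrary on $\{0,1\}$) into $\int\mathcal{A}f\,d\pi=0$. Computing $\mathcal{A}f$ — the transport part gives $-\lambda e^{-\lambda s}g(v)$, the $\beta$-part $\beta\delta_1(v)e^{-\lambda s}(g(0)-g(1))$, and the $\alpha$-part $\alpha\delta_0(v)(g(1)-e^{-\lambda s}g(0))$, the factor $e^{-\lambda\cdot 0}=1$ coming from the reset of $s$ on the up-jump — then integrating against $\pi$ and equating to zero the coefficients of $g(0)$ and of $g(1)$, I expect the system
\begin{align*}
\mu_0\,\alpha &= \mu_1\,(\lambda+\beta)\,\mathcal{L}(\pi_1)(\lambda),\\
\mu_0\,(\lambda+\alpha)\,\mathcal{L}(\pi_0)(\lambda) &= \mu_1\,\beta\,\mathcal{L}(\pi_1)(\lambda).
\end{align*}
Using $\alpha\mu_0=\beta\mu_1$, this solves to $\mathcal{L}(\pi_1)(\lambda)=\beta/(\lambda+\beta)$ and $\mathcal{L}(\pi_0)(\lambda)=\alpha\beta/\bigl((\lambda+\alpha)(\lambda+\beta)\bigr)=\tfrac{\alpha\beta}{\alpha-\beta}\bigl(\tfrac{1}{\lambda+\beta}-\tfrac{1}{\lambda+\alpha}\bigr)$. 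By injectivity of the Laplace transform of measures on $\reels_+$, these identify $\pi_1(ds)=\beta e^{-\beta s}\,ds$ and $\pi_0(ds)=\tfrac{\alpha\beta}{\alpha-\beta}(e^{-\beta s}-e^{-\alpha s})\,ds$; substituting back into the decomposition of $\pi$ gives the stated formula, and a one-line check confirms both are probability measures and nonnegative (the factors $e^{-\beta s}-e^{-\alpha s}$ and $\alpha-\beta$ share a sign).

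I do not expect a genuine obstacle here: the only point requiring the same care as in the general-$N$ uniqueness argument is justifying that the non-compactly-supported test functions $e^{-\lambda s}g(v)$ may be used in the generator identity (they are bounded, continuous for the metric $\rho$ of~\eqref{rho-def}, and $C^1$ along the flow), together with the remark that the formula degenerates when $\alpha=\beta$, where one passes to the limit and obtains $\pi_0(ds)=\alpha^2 s e^{-\alpha s}\,ds$. The heart of the argument is simply the explicit computation of $\mathcal{A}f$ on the exponential test functions and the partial-fraction inversion of the resulting Laplace transforms.
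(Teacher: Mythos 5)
Your proposal is correct and follows essentially the same route as the paper: decompose $\pi$ over the two states with the marginals $\mu_0=\beta/(\alpha+\beta)$, $\mu_1=\alpha/(\alpha+\beta)$, test the generator identity against $e^{-\lambda s}g(v)$, solve the resulting linear system for $\mathcal{L}(\pi_0)$ and $\mathcal{L}(\pi_1)$, and invert by injectivity of the Laplace transform. The only cosmetic difference is that you invoke the already-established existence and uniqueness to close the argument, whereas the paper additionally verifies invariance of the candidate by direct substitution; both are sound.
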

	\begin{proof}
		As in~\eqref{pi-decomp}, $\pi$ can be written as: $\pi(ds,v)= \pi_0(ds) \mathbb{1}_0(v)\mu_0+\pi_1(ds) \mathbb{1}_1(v)\mu_1$. In this case, $\mu_0=\frac{\beta}{\alpha+\beta}$ and $\mu_1=\frac{\alpha}{\alpha+\beta}$. Moreover, it is an invariant measure if and only if $\esp_{\pi} [\mathcal{A}f]=0, \forall f\in D(\mathcal{A})$. Thanks to functions $f$ well-chosen we get equations on Laplace transforms of $\pi_0$ and $\pi_1$.\\
		Denoting $e_{\lambda}^i(s,v)=e^{-\lambda s}\delta_v(i)$ we get:
		\[\left \{
		\begin{array}{r c l}
		\frac{\beta}{\alpha+\beta}\int_{\reels_+}Ae_{\lambda}^0(s,0)\pi_0(ds)+\frac{\alpha}{\alpha+\beta}\int_{\reels_+}Ae_{\lambda}^0(s,1)\pi_1(ds)=0 \\
		\\
		\frac{\beta}{\alpha+\beta}\int_{\reels_+}Ae_{\lambda}^1(s,0)\pi_0(ds)+\frac{\alpha}{\alpha+\beta}\int_{\reels_+}Ae_{\lambda}^1(s,1)\pi_1(ds)=0
		\end{array}
		\right.\]	
		We remind us that:
		\[\mathcal{A}f(s,v) = \beta \delta_1(v)\left(f(s,0)-f(s,1)\right)+\alpha \delta_0(v)\left(f(0,1)-f(s,0)\right)+\partial _sf(s,v)\]
		Thus:
		\[\left \{
		\begin{array}{r c l}
		\frac{\beta}{\alpha+\beta}\int_{\reels_+}(\alpha e^{-\lambda s}+\lambda e^{-\lambda s})\pi_0(ds)=\frac{\alpha}{\alpha+\beta}\int_{\reels_+}\beta e^{-\lambda s}\pi_1(ds) \\
		\\
		\beta \int_{\reels_+}\alpha\pi_0(ds) = \alpha\int_{\reels_+}(\beta + \lambda)e^{-\lambda s}\pi_1(ds)
		\end{array}
		\right.\]
		\\	
		But $\int_{\reels_+}\pi_0(ds)=\int_{\reels_+}\pi_1(ds)=1$.
		\\
		Therefore:
		\[\left \{
		\begin{array}{r c l}
		\int_{\reels_+}(\alpha+\lambda)e^{-\lambda s}\pi_0(ds)=\alpha\int_{\reels_+}e^{-\lambda s}\pi_1(ds) \\
		\\
		\int_{\reels_+}e^{-\lambda s}\pi_1(ds)=\frac{\beta}{(\beta + \lambda)} \Rightarrow\pi_1(s)=\beta e^{-\beta s}
		\end{array}
		\right.\]
		So
		\[\left \{
		\begin{array}{r c l}
		\int_{\reels_+}e^{-\lambda s}\pi_0(ds)=\frac{\alpha \beta}{(\alpha+\lambda)(\beta + \lambda)}\Rightarrow\pi_0(s)=\frac{\alpha\beta}{\alpha-\beta}&(e^{-\beta s}-e^{-\alpha s}) \\
		\\
		\pi_1(s)=\beta e^{-\beta s}&
		\end{array}
		\right.\]	
		We finally check the measure is invariant, that is to say:
		\begin{align*}
		&\esp_{\pi} [\mathcal{A}f]=\frac{\beta}{\alpha+\beta}\int_{\reels_+}\mathcal{A}f(s,0)\pi_0(ds)+\frac{\alpha}{\alpha+\beta}\int_{\reels_+}f(s,1)\pi_1(ds)\\
		&=\frac{\alpha\beta^2}{(\alpha+\beta)(\alpha-\beta)}\int_{\reels_+}(-\alpha f(s,0)+ \partial _sf(s,0))(e^{-\beta s}-e^{-\alpha s})ds
		+\frac{\alpha\beta}{\alpha+\beta}\int_{\reels_+}(\beta(f(s,0)-f(s,1))+\partial _sf(s,1))e^{-\beta s}ds\\
		&=\frac{\alpha\beta^2}{(\alpha+\beta)(\alpha-\beta)}\left[\int_{\reels_+}(-\alpha f(s,0)+\partial _sf(s,0))e^{-\beta s}ds+\int_{\reels_+}(\alpha f(s,0)- \partial _sf(s,0))e^{-\alpha s}ds\right]\\
		&+\frac{\alpha\beta}{\alpha+\beta}\left[\int_{\reels_+}\beta f(s,0)e^{-\beta s}ds+\int_{\reels_+}(-\beta f(s,1)+ \partial _sf(s,1))e^{-\beta s}ds\right]\\
		&=\left[\frac{\alpha\beta^2(\beta-\alpha)}{(\alpha+\beta)(\alpha-\beta)}+\frac{\alpha\beta^2}{(\alpha+\beta)}\right]\int_{\reels_+} f(s,0)e^{-\beta s}ds\\
		&=0
		\end{align*}
		Moreover, $\sum\limits_{v\in\{0,1\}}\int_{\reels_+}\pi(ds,v)=\frac{\beta}{\alpha+\beta}\int_{\reels_+}\pi_0(ds)+\frac{\alpha}{\alpha+\beta}\int_{\reels_+}\pi_1(ds)=1$ completes the proof.
	\end{proof}
	We can now go on the proof of proposition~\ref{prop-born-taux}.
	\begin{proof}
		We replace $\alpha$ by $\alpha_M$ for $\overline{S_{\infty}}$ and by $\alpha_m$ for $\underline{S_{\infty}}$:
		\begin{align*}
		\proba\left(\overline{S_{\infty}}>u\right)&=\int_u^{\infty}(\pi_{\alpha_M}(ds,0)+\pi_{\alpha_M}(ds,1))\\
		&= \int_u^{\infty}\left(\frac{\alpha_M\beta}{\alpha_M-\beta} (e^{-\beta s}-e^{-\alpha_M s}) \frac{\beta}{\alpha_M+\beta}+\beta e^{-\beta s} \frac{\alpha_M }{\alpha_M+\beta}\right)ds\\
		&=\frac{\alpha_M^2 e^{-\beta u}-\beta^2 e^{-\alpha_M u}}{\alpha_M^2 - \beta^2}
		\end{align*}
		And
		\begin{align*}
		\proba\left(\underline{S_{\infty}}>u\right)&=\int_u^{\infty}(\pi_{\alpha_m}(ds,0)+\pi_{\alpha_m}(ds,1))=\frac{\alpha_m^2 e^{-\beta u}-\beta^2 e^{-\alpha_m u}}{\alpha_m^2 - \beta^2}
		\end{align*}
		So from~\eqref{born-infty}:
		\[
		\frac{\alpha_M^2 e^{-\beta u}-\beta^2 e^{-\alpha_M u}}{\alpha_M^2 - \beta^2}\proba_{\pi^w}\left(V^j_t=0\right)\leq \proba_{\pi^w}\left(S_t^i>u,V^j_t=0\right) \leq \frac{\alpha_m^2 e^{-\beta u}-\beta^2 e^{-\alpha_m u}}{\alpha_m^2 - \beta^2}\proba_{\pi^w}\left(V^j_t=0\right)
		\]
		Hence with~\eqref{born-mu}:
		\begin{align}
		\frac{\alpha_M^2 e^{-\beta u}-\beta^2 e^{-\alpha_M u}}{\alpha_M^2 - \beta^2}\frac{\beta}{\alpha_M+\beta}\leq \proba_{\pi^w}\left(S_t^i>u,V^j_t=0\right) \leq \frac{\alpha_m^2 e^{-\beta u}-\beta^2 e^{-\alpha_m u}}{\alpha_m^2 - \beta^2}\frac{\beta}{\alpha_m+\beta}
		\end{align}
	\end{proof}

	From this proposition we deduce bounds on the rates $r^{+/-}_{ij}(w)=\sum_{v,v^i=0}\mu^w_v\alpha_i(w,v)\int_{E_2}p^{+/-}(s_j)\pi^w_v(ds)$ for all $p^+$ and $p^-$ differentiable monotone. If functions $p^+$ and $p^-$ are decreasing:
	\begin{align*}
	\left(p^{+/-}(0) + \int_{0}^{+\infty}\right.&\left.\left(\frac{\alpha_m^2 e^{-\beta u}-\beta^2 e^{-\alpha_m u}}{\alpha_m^2 - \beta^2}\right)(p^{+/-})'(u)du\right)\alpha_m\sum_{v,v^j=0}\mu^w_v\\ \\
	&\leq r^{+/-}_{ij}(w) \leq\\ \\
	\left(p^{+/-}(0) + \int_{0}^{+\infty}\right.&\left.\left(\frac{\alpha_M^2 e^{-\beta u}-\beta^2 e^{-\alpha_M u}}{\alpha_M^2 - \beta^2}\right)(p^{+/-})'(u)du\right)\alpha_M\sum_{v,v^j=0}\mu^w_v
	\end{align*}
	
	We finally conclude with $p^+(s)=A_+e^{\frac{-s}{\tau_+}}$ and $p^-(s)=A_-e^{\frac{-s}{\tau_-}}$:
	\begin{align*}
	r^{+/-}_{ij}(w) &\geq\left(A_{+/-} - \int_{0}^{+\infty}\left(\frac{\alpha_m^2 e^{-\beta u}-\beta^2 e^{-\alpha_m u}}{\alpha_m^2 - \beta^2}\right)\frac{A_{+/-}}{\tau_{+/-}}e^{-\frac{u}{\tau_{+/-}}}du\right)\alpha_m\sum_{v,v^j=0}\mu^w_v\\
	&\geq A_{+/-} \alpha_m\left(1- \frac{ \frac{\alpha_m^2}{\tau_{+/-}\beta+1}- \frac{\beta^2}{\tau_{+/-}\alpha_m+1}}{\alpha_m^2 - \beta^2}\right)\sum_{v,v^j=0}\mu^w_v\\
	&\geq \frac{A_{+/-}\alpha_m\beta}{\alpha_M+\beta} \left(1- \frac{ \frac{\alpha_m^2}{\tau_{+/-}\beta+1}- \frac{\beta^2}{\tau_{+/-}\alpha_m+1}}{(\alpha_m^2 - \beta^2)}\right)
	\end{align*}
	To get the last inequality, we used the fact that $1- \frac{ \frac{\alpha_m^2}{\tau_{+/-}\beta+1}- \frac{\beta^2}{\tau_{+/-}\alpha_m+1}}{(\alpha_m^2 - \beta^2)}\geq 0$ and proposition~\ref{ine-mu}. We can do the same to major $r^{+/-}_{ij}(w)$:
	\begin{align*}
	r^{+/-}_{ij}(w)\leq \frac{A_{+/-}\alpha_M\beta}{\alpha_m+\beta} \left(1- \frac{ \frac{\alpha_M^2}{\tau_{+/-}\beta+1}- \frac{\beta^2}{\tau_{+/-}\alpha_M+1}}{(\alpha_M^2 - \beta^2)}\right)
	\end{align*}
	But we showed that if $r^+_{ij}(w)<r^-_{ij}(w)$ for all $w$ we get that the limit process $W_t$ is recurrent positive so it is the case if:
	\[\frac{A_+\alpha_M}{\alpha_m+\beta} \left(1- \frac{ \frac{\alpha_M^2}{\tau_+\beta+1}- \frac{\beta^2}{\tau_+\alpha_M+1}}{(\alpha_M^2 - \beta^2)}\right) < \frac{A_-\alpha_m}{\alpha_M+\beta} \left(1- \frac{ \frac{\alpha_m^2}{\tau_-\beta+1}- \frac{\beta^2}{\tau_-\alpha_m+1}}{(\alpha_m^2 - \beta^2)}\right)\]
	Finally we get the following simple condition:
	\[\frac{ \alpha_M^2 A_+\tau_+(\alpha_M\tau_++\beta\tau_++1)(\tau_-\alpha_m+1)(\tau_-\beta+1)}{\alpha_m^2 A_-\tau_-(\alpha_m\tau_-+\beta\tau_-+1)(\tau_+\alpha_M+1)(\tau_+\beta+1)} < 1 \]
	If $p^+$ and $p^-$ are not monotone, we can get a similar condition separating intervals where they are increasing or decreasing.\\\\
	
	Finally, previous results show that in our model weights can diverge although rates are bounded and we can give simple explicit condition on parameters for which they don't diverge. This is the first time, to our knowledge, that such a condition can be given without any homeostatic mechanisms added. Some analytical studied previously needed to add some constraints in order to bound weights and obtained results depending on the spike correlation matrix they were not able to control~\cite{kempter_hebbian_1999,gilson_emergence_2009,ocker_self-organization_2015}. With such a condition, our model becomes ready to use {\color{black}being aware of criticizes we present in the sixth section}.

	\section{Simulations}
	As shown in the appendix~\ref{uniqueness_dim2}, we can find the Laplace transform of $\pi$, the invariant measure of the fast process. However, inverting it analytically for a network of N neurons, N too large, needs too heavy computations. Hence, we apply our results in a network of 2 neurons and then simulate a bigger network. 
	But first let remind us the parameters present in our model.
	\subsection{Biologically coherent parameters:}
	Even if simple, our model depends on many parameters. First, let's recall the probability to jump:
	\begin{align*}
	p^+(s)=A_+e^{-\frac{s}{\tau_+}}\ \ and\ \ p^-(s)=A_-e^{-\frac{s}{\tau_-}} 
	\end{align*}
	Then let's detail the function $\xi_i$ we used in our simulations. We used the same $\xi_i=\xi$ for all neurons, $\sigma >0$ and $\theta>0$:
	\[\xi_i(x)=\xi(x)=\frac{S_0}{1+e^{-\sigma (x-\theta)}}+\alpha_m\]
	Our parameters are then: $\epsilon, A_+, A_-, \tau_-, \tau_+, \sigma, \theta, \beta, \alpha_m\ \text{and}\ \alpha_M$. Time of influence of a spike 10ms so $\beta{\sim} 0.1$. Firing rates of neurons are bounded by $\alpha_m{\sim} 0.01$ and $\alpha_M{\sim} 1$. STDP parameters are in the following range: $\tau_{+/-}{\in}[1,50],\ A_{+/-}{\in}[0,1]$. Finally, $S_0= \alpha_M, \sigma = 0.3,\ \theta = \frac{ln(\alpha_M/\alpha_m - 1)}{\sigma}\ \text{and}\ \epsilon{\leq}0.01$.\\
	
	Functions $p^+$ and $p^-$ enable to be close to biological experiments~\cite{bi_synaptic_1998}:
	\begin{figure}[h!]
		\includegraphics[scale=0.38]{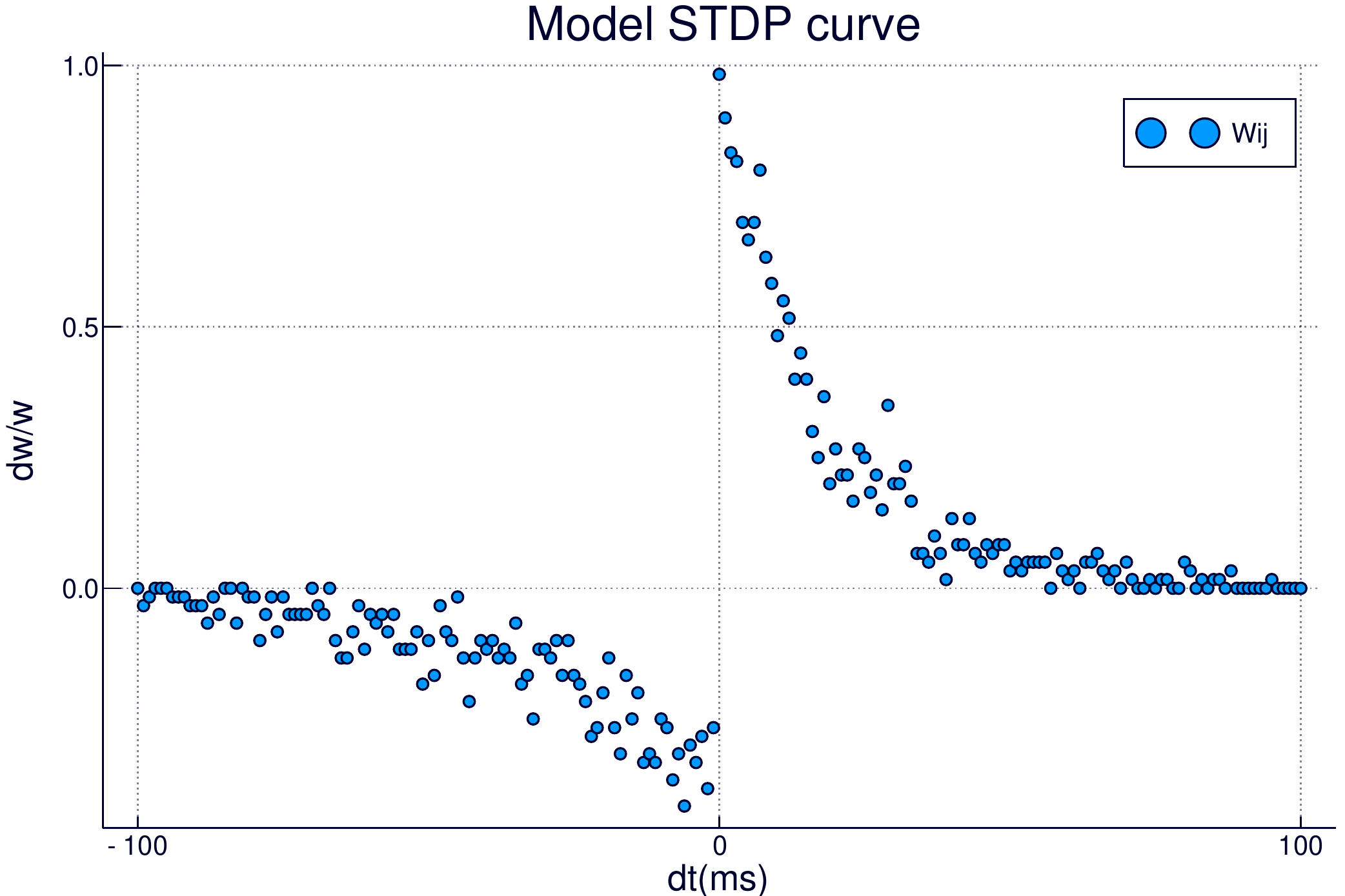}\hspace{2em}
		\includegraphics[scale=0.25]{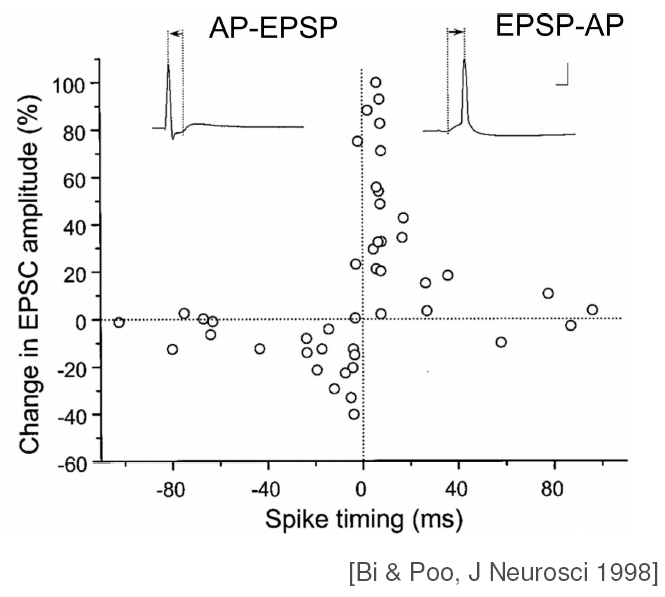}
		\caption{Bi-Poo experiment on our model compare to the real one. Parameters used here are: $ A_+{=}1,\ A_-{=}0{.}4,\ \tau_-{=}2\tau_+{=}34ms$ as in~\cite{gilson_spectral_2012}.}
	\end{figure}

	\newpage
	\subsection{First applications of our results}
	In the simple case of \eqref{proba-poids} we get:
	\begin{align*}
	r^+_{ij}(w)&=\sum_{v,v^j=0}\mu^w_v\alpha_j(w,v)\int_{E_2}p^+(s_i)\pi^w_v(ds)\\
	&=\sum_{v\in I,v^j=0}\mu^w_v\alpha_j(w,v)\int_{E_2}A_+e^{-\frac{s_i}{\tau_+}}\pi^w_v(ds)\\
	&=\sum_{v\in I,v^j=0}\mu^w_v\alpha_j(w,v)A_+\mathcal{L}\{\pi^w_v\}(0,...,0,\underbrace{\frac{1}{\tau_+}}_i,0,...,0)
	\end{align*}
	And
	\[r^-_{ij}(w) = \sum_{v\in I,v^i=0}\mu^w_v\alpha_i(w,v)A_-\mathcal{L}\{\pi^w_v\}(0,...,0,\underbrace{\frac{1}{\tau_-}}_j,0,...,0)\]
	\noindent\textbf{\small One weight free and 2 neurons:}\\
		In this example of one weight free and 2 neurons, we get a birth and death process with $w^{21}$ fixed, $w{=}(w^{12},w^{21})$. We can find the explicit stationnary distribution of the weights in that case. From previous computations we have:
		\begin{align*}
		w^{12}&\rightarrow w^{12}+\Delta w:\\
		&r_+(w^{12}) = A_+ \left[\mu^w_{00}\alpha_2(w,00)\mathcal{L}(\pi^w_{00})\left(\frac{1}{\tau_+},0\right)+\mu^w_{10}\alpha_2(w,10)\mathcal{L}(\pi^w_{10})\left(\frac{1}{\tau_+},0\right)\right]\\
		w^{12}&\rightarrow w^{12}-\Delta w:\\
		& r_-(w^{12}) = \mathbb{1}_{]\Delta w, +\infty[}(w^{12})A_-\left[\mu^w_{00}\alpha_1(w,00)\mathcal{L}(\pi^w_{00})\left(0,\frac{1}{\tau_-}\right)+\mu^w_{01}\alpha_1(w,01)\mathcal{L}(\pi^w_{01})\left(0,\frac{1}{\tau_-}\right)\right]
		\end{align*}
		Hence, it is similar to a birth process on $\nat$ with 0 reflecting. In order to study the conditions for transience and recurrence, we use the following theorem which gather some results of the four first sections of~\cite{karlin1957classification} with its notations.
		\begin{theoreme}\label{theo_birth_death}
			Suppose $X_t$ is a birth and death process on $\nat$ with birth rates $\lambda_k>0$ for all $k\in \nat$ and death rates $\mu_k>0$ for all $k\in \nat^*$ and $\mu_0=0$.
			Then~\cite{karlin1957classification} gives the following classification:
			\begin{itemize}
				\item [(a)] The process is ergodic if and only if  $\sum_{i=1}^{+\infty}\prod_{j=1}^{i}\frac{\mu_j}{\lambda_j}=+\infty$ and $\sum_{i=1}^{+\infty}\prod_{j=1}^{i}\frac{\lambda_j-1}{\mu_j}<+\infty$. In this case, there exists a unique $\theta$ invariant measure given by:
				\[\theta(i) =\theta(0) \prod_{j=1}^{i}\frac{\lambda_{j-1}}{\mu_j}\]
				With 
				\[\theta(0) = \frac{1}{1+\sum_{i=1}^{+\infty}\prod_{j=1}^{i}\frac{\lambda_{j-1}}{\mu_j}}\]
				\item [(b)] The process is null recurent if and only if  $\sum_{i=1}^{+\infty}\prod_{j=1}^{i}\frac{\mu_j}{\lambda_j}=+\infty$ and $\sum_{i=1}^{+\infty}\prod_{j=1}^{i}\frac{\lambda_j-1}{\mu_j}=+\infty$
				\item [(b)] The process is transient if and only if  $\sum_{i=1}^{+\infty}\prod_{j=1}^{i}\frac{\mu_j}{\lambda_j}<+\infty$ and $\sum_{i=1}^{+\infty}\prod_{j=1}^{i}\frac{\lambda_j-1}{\mu_j}=+\infty$
			\end{itemize}
		\end{theoreme}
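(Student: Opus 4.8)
The plan is to prove this by the classical reversibility and hitting-probability machinery for birth–death chains, which is exactly what \cite{karlin1957classification} assembles in its first sections. The three regimes are separated by two ``scale-type'' series: one decides whether state $0$ is reached with probability one (recurrence versus transience), and, conditionally on recurrence, the other decides whether the reversible measure has finite mass (positive versus null recurrence). Throughout I read $\lambda_{j-1}$ for the ``$\lambda_j-1$'' appearing in the statement, as the invariant-measure formula displayed just below makes clear.

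First I would record that the chain is reversible: the detailed-balance equations $\pi(n)\lambda_n=\pi(n+1)\mu_{n+1}$ are solved by $\pi(n)=\pi(0)\prod_{j=1}^{n}\frac{\lambda_{j-1}}{\mu_j}$, the convention $\mu_0=0$ making $0$ a reflecting boundary and the recursion start cleanly at $n=0$. On an irreducible chain this is, up to a multiplicative constant, the unique invariant measure, so once recurrence is known, positive recurrence is equivalent to $\sum_{n\ge 1}\prod_{j=1}^{n}\frac{\lambda_{j-1}}{\mu_j}<+\infty$; normalising then gives the stationary law $\theta$ with the stated value of $\theta(0)$. This pins down the boundary between cases (a) and (b).

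Next, for recurrence versus transience I would pass to the embedded jump chain, a discrete-time birth–death chain with upward probability $p_n=\lambda_n/(\lambda_n+\mu_n)$ and downward probability $q_n=\mu_n/(\lambda_n+\mu_n)$ at $n\ge 1$ and a deterministic upward step at $0$; recurrence of the continuous-time process is equivalent to recurrence of this jump chain, since recurrence depends only on the jump chain. For the jump chain one runs the gambler's-ruin computation: writing $h_n$ for the probability of reaching $0$ before $N$ started from $n$, the relation $p_n(h_{n+1}-h_n)=q_n(h_n-h_{n-1})$ telescopes, with $h_0=1$, $h_N=0$, and yields $h_n=\big(\sum_{k=n}^{N-1}\prod_{j=1}^{k}\tfrac{\mu_j}{\lambda_j}\big)\big/\big(\sum_{k=0}^{N-1}\prod_{j=1}^{k}\tfrac{\mu_j}{\lambda_j}\big)$, using $q_j/p_j=\mu_j/\lambda_j$. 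Letting $N\to\infty$, the return probability to $0$ equals $1$ for every $n$ precisely when $\sum_{i\ge 1}\prod_{j=1}^{i}\frac{\mu_j}{\lambda_j}=+\infty$; this is the recurrence criterion, and its negation is transience. Combining with the previous paragraph produces the three cases, and one notes for internal consistency that a transient irreducible chain cannot carry a finite invariant measure, so the divergence condition listed under (c) is automatic.

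The main obstacle I expect is not any single calculation but the bookkeeping at the reflecting state $0$ together with the non-explosion issue intrinsic to continuous-time chains: the clean trichotomy ergodic / null recurrent / transient presupposes that the minimal process is honest, and if the birth rates $\lambda_n$ grow fast enough the process can reach infinity in finite time, in which case the measure $\pi$ above need not be invariant for the minimal process. In the application made here the jump rates $r^{+/-}_{ij}(w)$ are uniformly bounded, so $\lambda_n,\mu_n$ are bounded and non-explosion is immediate; in general one must either assume non-explosion or appeal to the finer analysis of \cite{karlin1957classification}, which is precisely why the result is quoted rather than reproved from scratch.
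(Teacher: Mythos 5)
Your proposal is correct, but it is worth noting that the paper does not actually prove this statement at all: Theorem~\ref{theo_birth_death} is quoted verbatim as a compilation of results from the first sections of \cite{karlin1957classification}, and the text moves straight on to Corollary~\ref{cor_birth_death}. So any comparison is between your self-contained argument and the cited source rather than an in-paper proof. Your route --- detailed balance to identify the unique invariant measure up to normalisation, the gambler's-ruin telescoping on the embedded jump chain to separate recurrence from transience via $\sum_i\prod_{j=1}^i\mu_j/\lambda_j$, and normalisability of the reversible measure to separate positive from null recurrence --- is the standard elementary probabilistic proof and is sound; it is also quite different in flavour from Karlin--McGregor's own derivation, which goes through the spectral (orthogonal-polynomial) representation of the transition function, the series in the statement arising there as the classical boundary-classification quantities. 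You are right to read $\lambda_{j-1}$ for the statement's ``$\lambda_j-1$'' (the displayed formula for $\theta$ confirms this is a typo), and your two caveats are the genuinely load-bearing ones: the trichotomy as stated presupposes non-explosion (without it both series can converge simultaneously and the minimal process is dishonest, which is why Karlin--McGregor's full classification has additional boundary cases), and the second condition in case (c) is redundant under non-explosion since a transient irreducible chain carries no finite invariant measure. Both caveats are harmless in the paper's application, where the rates $r^{\pm}_{ij}(w)$ are uniformly bounded above and below, exactly as you observe.
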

	
		In order to apply this theorem to our example, we prove the following corollary.
		\begin{corollaire}\label{cor_birth_death}
			Suppose assumptions of theorem~\ref{theo_birth_death} hold. Suppose in more that $\lambda_k$ and $\mu_k$ converge respectively towards $\lambda$ and $\mu$ when $k\rightarrow +\infty$. Then $X_t$ is ergodic iff $0<\lambda<\mu$ and transient if $\lambda>\mu>0$.
		\end{corollaire}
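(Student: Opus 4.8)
The plan is to reduce the claim to the Karlin–McGregor classification recalled in Theorem~\ref{theo_birth_death}, so that everything comes down to deciding, by d'Alembert's ratio test, whether the two series
\[
\sum_{i\geq 1}a_i,\quad a_i:=\prod_{j=1}^{i}\frac{\mu_j}{\lambda_j},
\qquad\text{and}\qquad
\sum_{i\geq 1}b_i,\quad b_i:=\prod_{j=1}^{i}\frac{\lambda_{j-1}}{\mu_j}
\]
converge or diverge. In each case to be proved one has $\lambda,\mu\in(0,+\infty)$ by hypothesis, and since $\lambda_k\to\lambda$, $\mu_k\to\mu$, the successive ratios satisfy $a_{i+1}/a_i=\mu_{i+1}/\lambda_{i+1}\to\mu/\lambda$ and $b_{i+1}/b_i=\lambda_i/\mu_{i+1}\to\lambda/\mu$.

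First I would treat $0<\lambda<\mu$. Then $\mu/\lambda>1$, so $a_i$ grows at least geometrically and $\sum_i a_i=+\infty$; simultaneously $\lambda/\mu<1$, so $\sum_i b_i<+\infty$ by the ratio test. Both conditions in item~(a) of Theorem~\ref{theo_birth_death} hold, hence the process is ergodic (with the $\theta$ displayed there as its unique invariant probability). For $\lambda>\mu>0$ the roles swap: $\mu/\lambda<1$ gives $\sum_i a_i<+\infty$, while $\lambda/\mu>1$ gives $\sum_i b_i=+\infty$, which is precisely the transience criterion of Theorem~\ref{theo_birth_death}, so the process is transient.

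It remains to upgrade the first implication to an equivalence, i.e. to see that ergodicity forces $0<\lambda<\mu$. An ergodic chain is recurrent, so the transient case just handled excludes $\lambda>\mu$, leaving $\lambda\leq\mu$. The one genuinely delicate point — and the main obstacle — is the borderline $\lambda=\mu$: there d'Alembert's test is inconclusive, the ratios of both series tending to $1$, and one would need a second-order (Bertrand-type) expansion of $\mu_j/\lambda_j$ to separate positive recurrence from null recurrence or transience. This knife-edge is non-generic and does not arise for the rate functions $r^{\pm}_{ij}$ considered here, so restricting to $\lambda\neq\mu$ — or invoking their explicit form to rule out $\lambda=\mu$ — closes the equivalence; otherwise the two ratio computations above already give the full classification away from $\lambda=\mu$.
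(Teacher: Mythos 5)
Your proof follows the same route as the paper's: apply d'Alembert's ratio test to the two series appearing in Theorem~\ref{theo_birth_death}, using that the successive ratios converge to $\mu/\lambda$ and $\lambda/\mu$ respectively, which settles both the case $0<\lambda<\mu$ (ergodicity) and the case $\lambda>\mu>0$ (transience). Your additional remark about the ``only if'' direction is well taken: the paper's proof likewise only establishes the two one-sided implications and leaves the borderline $\lambda=\mu$ untouched (it is explicitly flagged as depending on the mode of convergence in the remark immediately following the corollary), so your honesty about where the stated equivalence actually stops being proved is a point in your favour rather than a defect.
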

		\begin{proof}
			Let prove only the ergodic case as the proof for the transient one is similar. Suppose that $0<\lambda<\mu$. Thus, for all $\epsilon >0$, $\exists\ k_0\in \nat$ such that for all $j>k_0$, $\frac{\lambda_{j-1}}{\mu_j}\leq \frac{\lambda}{\mu}-\epsilon=l_{\epsilon}$ and $\frac{\lambda_j}{\mu_j}\leq l_{\epsilon}$. Taking $l_\epsilon<1$ gives the result according to the d'Alembert's ratio test.
		\end{proof}
		\begin{remarque}
{	\it		The case $\lambda=\mu>0$ is more complex as it will depend on the way $(\lambda_k)$ and $(\mu_k)$ converge.}
		\end{remarque}
		
		We come back to our example. 
		\begin{proposition}
			$r_+(w^{12},w^{21})$ and $r_-(w^{12},w^{21})$ are strictly positive and converge respectively to $R_+(\alpha_M,w^{21})>0$ and $R_-(\alpha_M,w^{21})>0$ when $w^{12}\rightarrow \infty$.
		\end{proposition}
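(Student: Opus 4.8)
The plan is to isolate the dependence of $r_\pm$ on $w^{12}$, prove strict positivity factor by factor, and then obtain the limit from the continuity of the finite linear systems that determine $\mu^w$ and the Laplace transforms $\mathcal{L}(\pi^w_v)$, identifying the limiting coefficients with those of a genuine ``saturated'' two-neuron process to which Theorem~\ref{theo-inv-mes} applies. First I would localise the $w^{12}$-dependence: since $\alpha_2(w,v)=\xi_2(w^{12}v^1)$ and $\alpha_1(w,v)=\xi_1(w^{21}v^2)$, the only transition rate of $(V_t)$ that depends on $w^{12}$ is $\alpha_2(w,10)=\alpha_2(w,11)=\xi_2(w^{12})$; all the others, namely $\alpha_1(w,00)=\xi_1(0)$, $\alpha_1(w,01)=\xi_1(w^{21})$, $\alpha_2(w,00)=\alpha_2(w,01)=\xi_2(0)$ and $\beta$, are fixed. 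Hence $\mu^w$, and by the uniqueness proof (Step~2) the numbers $\mathcal{L}(\pi^w_v)(\check\lambda_l)$ at $\check\lambda_1=(1/\tau_+,0)$ and $\check\lambda_2=(0,1/\tau_-)$ — which in dimension $2$ are precisely the evaluation points occurring in $r_+$ and $r_-$, so no Step~1 recursion is needed — depend on $w^{12}$ only through $\xi_2(w^{12})$.

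For strict positivity, each ingredient of $r_\pm$ is $>0$: $A_\pm>0$; $\mu^w_v>0$ for every $v\in\{0,1\}^2$, because $(V_t)$ is an irreducible finite Markov chain (all four states communicate, the rates being $\geq\min(\alpha_m,\beta)>0$), hence positive recurrent with a strictly positive invariant law, as recalled in the uniqueness proof; $\alpha_i(w,v)\geq\alpha_m>0$; and $\mathcal{L}(\pi^w_v)(\lambda)=\int_{\reels_+^2}e^{-\lambda\cdot s}\pi^w_v(ds)>0$, being the integral of a strictly positive function against a probability measure. A finite sum of products of strictly positive numbers being strictly positive, $r_+(w^{12})>0$ for all $w^{12}$ and $r_-(w^{12})>0$ for all $w^{12}>\Delta w$.

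For the limit, note that $\xi_2$ is nondecreasing and bounded, so $\xi_2(w^{12})$ increases to $\xi_2^\infty:=\sup_{x\in\reels}\xi_2(x)\in[\alpha_m,\alpha_M]$ as $w^{12}\to\infty$. The generator $\mathcal{B}_0$ of $(V_t)$ then has entrywise convergent rates; $\mu^w$ is the solution of a fixed finite linear system (the invariance equations together with the normalisation), whose solution depends continuously, via Cramer's rule, on the rates as long as the chain remains irreducible — which it does, since the limiting rates are still $\geq\min(\alpha_m,\beta)>0$ — so $\mu^w\to\mu^\infty$ with $\mu^\infty_v>0$. Likewise, in the Step~2 system $D(\check\lambda_l)\,(\mathcal{L}(\pi^w_1)(\check\lambda_l),\dots,\mathcal{L}(\pi^w_{2^N})(\check\lambda_l))=\Lambda^{(l)}$, both $D(\check\lambda_l)$ and $\Lambda^{(l)}$ depend continuously on the rates and on $\mu^w$, and $D(\check\lambda_l)$ is strictly diagonally dominant with a diagonal margin at least $\lambda_l\min_v\mu^w_v$, which stays bounded away from $0$ uniformly in $w^{12}$; hence $D(\check\lambda_l)$ is uniformly invertible and $\mathcal{L}(\pi^w_v)(\check\lambda_l)$ converges. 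Substituting these convergences in the formulas for $r_\pm$ yields $r_\pm(w^{12})\to R_\pm$, constants depending only on $w^{21}$ and the parameters through $\xi_2^\infty$ (which equals $\alpha_M$ when $\xi_2$ attains the common bound, explaining the notation $R_\pm(\alpha_M,w^{21})$). To see $R_\pm>0$, observe that the limiting rates define a bona fide two-neuron process of the same type, neuron $2$ simply firing at the constant rate $\xi_2^\infty\geq\alpha_m>0$ whenever neuron $1$ is on; Theorem~\ref{theo-inv-mes} gives it a unique invariant probability measure $\pi^\infty$, whose marginal weights $\mu^\infty_v>0$ and whose Laplace transforms $\mathcal{L}(\pi^\infty_v)(\lambda)=\int e^{-\lambda\cdot s}\pi^\infty_v(ds)>0$ satisfy the limit of the (uniquely solvable) Step~2 system, hence coincide with $\lim_{w^{12}\to\infty}\mathcal{L}(\pi^w_v)(\check\lambda_l)$; so $R_\pm$ is again a finite sum of products of strictly positive numbers, hence $R_\pm>0$.

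The main obstacle is the convergence step: one must check that the strict diagonal dominance of the matrices $D(\check\lambda_l)$, and of $\mathcal{B}_0$ viewed as a $Q$-matrix, holds with a margin uniform in $w^{12}$, so that invertibility — and therefore the continuity of $w^{12}\mapsto(\mu^w,\mathcal{L}(\pi^w_v))$ — does not degenerate as $w^{12}\to\infty$. Once this uniform non-degeneracy is established, localisation, strict positivity, and positivity of the limit are immediate.
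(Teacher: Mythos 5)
Your proof is correct and follows the same overall strategy as the paper: localise the $w^{12}$-dependence of $r_\pm$ in the single rate $\alpha_2(w,10)=\xi(w^{12})$, get strict positivity factor by factor ($\mu^w_v>0$ by irreducibility of the finite chain, $\alpha_i\geq\alpha_m$, Laplace transforms of probability measures are positive), and pass to the limit by continuity of the finite linear systems in the rates. The one place where you diverge is the justification of that continuity: the paper invokes the explicit two-neuron computation of the appendix, where $\mathcal{L}\pi^w_v(\check\lambda_l)$ is read off from the inverses of the $4\times4$ matrices $M_1(\lambda_1)$, $M_2(\lambda_2)$ and is therefore a positive, bounded rational fraction of $\xi(w^{12})$ on $[\alpha_m,\alpha_M]$, continuous by inspection; you instead argue abstractly through the Step~2 system $D(\check\lambda_l)$, checking that its strict diagonal dominance has a margin uniform in $w^{12}$, and you additionally identify the limiting coefficients with the invariant law of a genuine saturated process to conclude $R_\pm>0$. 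Your route is marginally more general (it would survive in dimension $N$ and for a $\xi_2$ whose supremum is not $\alpha_M$, a case the paper glosses over by taking the specific sigmoid), at the cost of having to verify the uniform non-degeneracy you flag at the end; the paper's route is shorter because the appendix already exhibits the solution in closed form. Both are sound.
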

		\begin{proof}
			First, $\alpha_1(w,00)=\alpha_2(w,00)=\xi(0)=\alpha_m$ and $\alpha_1(w,01)=\xi(w^{21})$ don't depend on $w^{12}$.
			Second, $x\mapsto \mathcal{L}\pi^w_v(0,x)$, $x\mapsto \mathcal{L}\pi^w_v(x,0)$ and $\mu_v^w$ depend on $w^{12}$ only through $\alpha_2(w,10)=\xi(w^{12})$. But $\lim_{w^{12}\rightarrow +\infty}\xi(w^{12})=\alpha_M$ so $\vec{\mu}^w$ converges to $\vec{\mu}$ solution of~\eqref{murel2} with $\alpha_{01}^{11}=\alpha_1(w,01)=\xi(w^{21})$ and $\alpha_{10}^{11}=\lim_{w^{12}\rightarrow +\infty}\alpha_2(w,10)=\lim_{w^{12}\rightarrow +\infty}\xi(w^{12})=\alpha_M$. Concerning $x\mapsto \mathcal{L}\pi^w_v(0,x)$, we can fix $x=x_0$ and call $f_v(\xi(w^{12}))=\mathcal{L}\pi^w_v(0,x_0)$. Computations of~\ref{eq_lap_0} show that for all $v \in \{0,1\}^2$, $0<f_v(y)<\infty$ for all $y\in [\alpha_m,\alpha_M]$ and is continuous as a positive bounded rational fraction. Hence, $w^{12}\mapsto f_v(\xi(w^{12}))$ is continuous by composition. We conclude that $\lim_{w^{12}\rightarrow +\infty}\mathcal{L}\pi^w_v(0,x_0)=f_v(\alpha_M)$ and:
			\begin{align*}
			\lim_{w^{12}\rightarrow \infty}r_+(w) &=\lim_{w^{12}\rightarrow \infty} A_+ \left[\mu^w_{00}\alpha_m\underbrace{\mathcal{L}(\pi^w_{00})\left(\frac{1}{\tau_+},0\right)}_{\underset{w\rightarrow\infty}{\longrightarrow}f_{00}(\alpha_M)}
			+\mu^w_{10}\alpha_M\underbrace{\mathcal{L}(\pi^w_{10})\left(\frac{1}{\tau_+},0\right)}_{\underset{w\rightarrow\infty}{\longrightarrow}f_{10}(\alpha_M)}\right]
			\\
			&=A_+(\mu_{00}\alpha_mf_{00}(\alpha_M)+\mu_{10}\alpha_Mf_{10}(\alpha_M))
			\\
			&=R_+
			\end{align*}
			It is similar for $x\mapsto \mathcal{L}\pi^w_v(x,0)$.
			\begin{align*}
			\lim_{w^{12}\rightarrow \infty}r_-(w) =R_-
			\end{align*}
		\end{proof}
		Hence, by corollary~\ref{cor_birth_death}, $R_+<R_-$ ensures the process $w^{12}_t$ admits a unique invariant measure $\theta$: \[\theta(i\Delta w) =\theta(\Delta w) \prod_{j=2}^{i}\frac{r_+((j-1)\Delta w)}{r_-(j\Delta w)}\]
		With 
		\[\theta(\Delta w) = \frac{1}{1+\sum_{i=1}^{+\infty}\prod_{j=2}^{i}\frac{r_+((j-1)\Delta w)}{r_-(j\Delta w)}}\]\\
		
		We then wonder when this condition holds and we did simulations with parameters in the range of biological ones. Practically, explosion of the weight reflects the fact that LTP wins over LTD. Some studies has tried to tackle question of the relationship between STDP curve parameters, $\tau_{+/-}$ and $A_{+/-}$, and the balance of LTP and LTD. They showed that when the integral of the
		STDP window is enough biased toward depression the system is intrinsically stable~\cite{kempter_hebbian_1999,kempter_intrinsic_2001,izhikevich_relating_2003}. {\color{black}In our case, we can find examples for which the "enough" is important}. For instance with the following parameters, we get an explosion of $w^{12}$ when depression wins against potentiation:
		\[\beta = 0.1,\ \alpha_m = 0.01,\ \alpha_M = 1,\ \tau_+ = 17ms,\ \tau_- = 34ms,\ A_- = 0.7,\ A_+ = 0.8,\ \epsilon = 10^{-4}\] 
		We took $p_{\epsilon}^{+/-}=\epsilon p^{+/-}$. When $\epsilon$ is small enough ($\leq 10^{-4}$) simulations agrees with analytical results. That is to say $w^{12}$ diverges when $w^{21}<25$ and doesn't diverge when $w^{21}>25$:
		\begin{figure}[h!]
			\includegraphics[height=5cm,width=8cm]{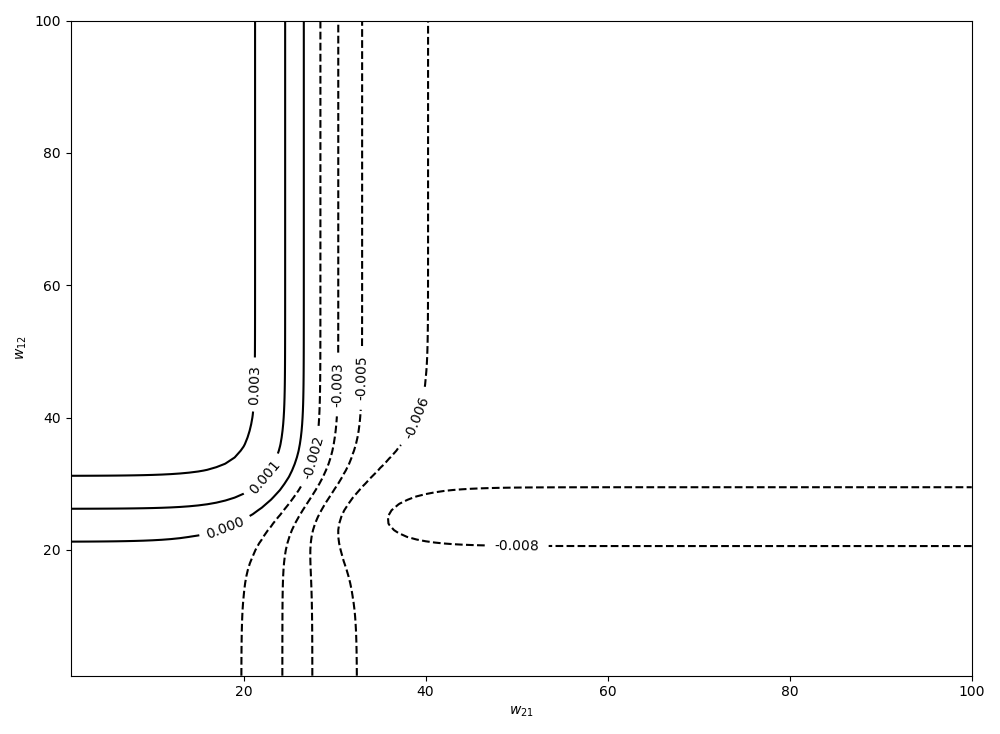}
			\hfill
			\includegraphics[height=5cm,width=8cm]{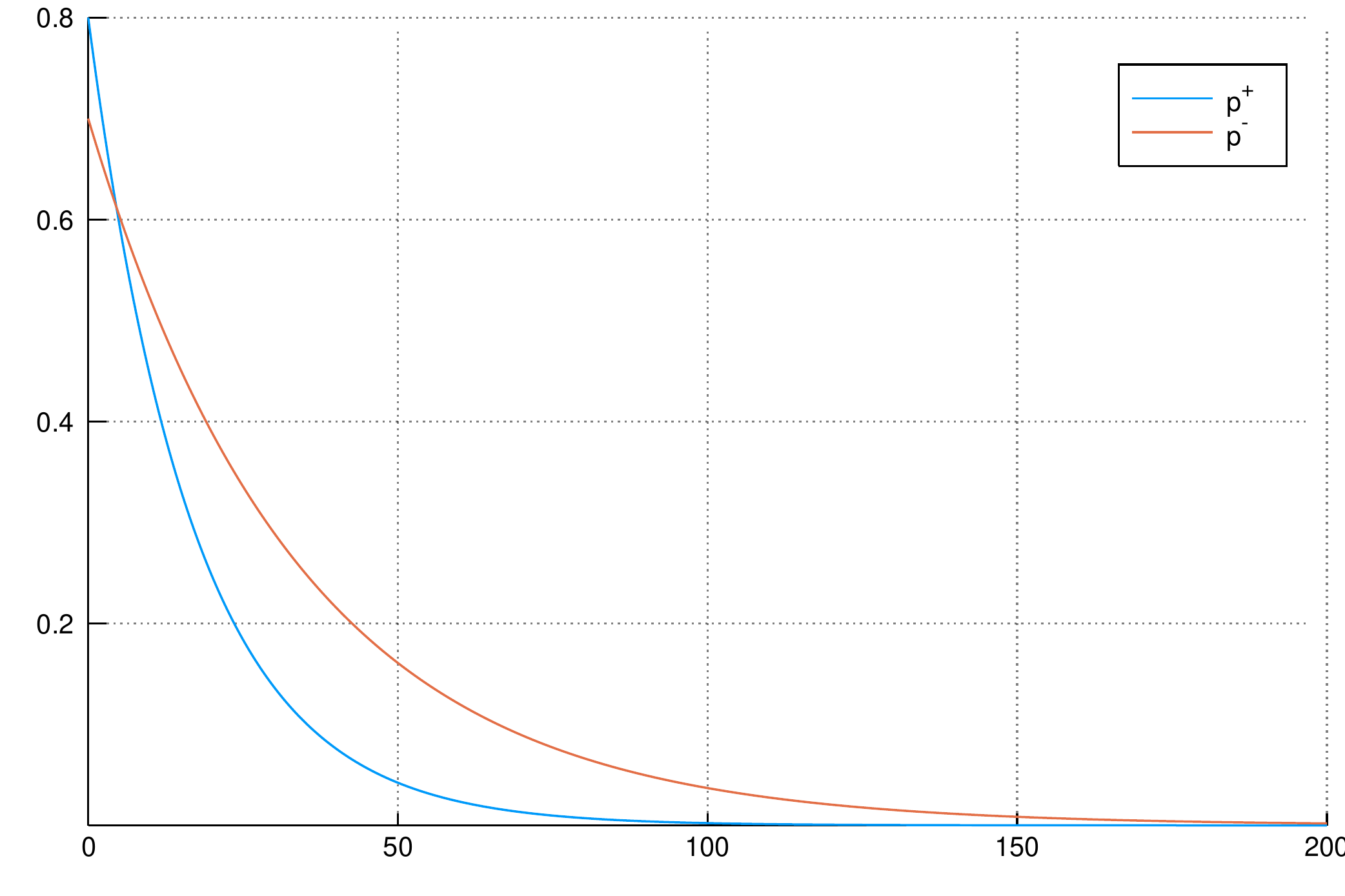}
			\caption{Plot of $r_{+}(w){-}r_{-}(w)$(left) and plot of $p^{+},\ p^{-}$ on the same graph(right)}
		\end{figure}
	
		\begin{figure}[h!]
			\includegraphics[height=5cm,width=8cm]{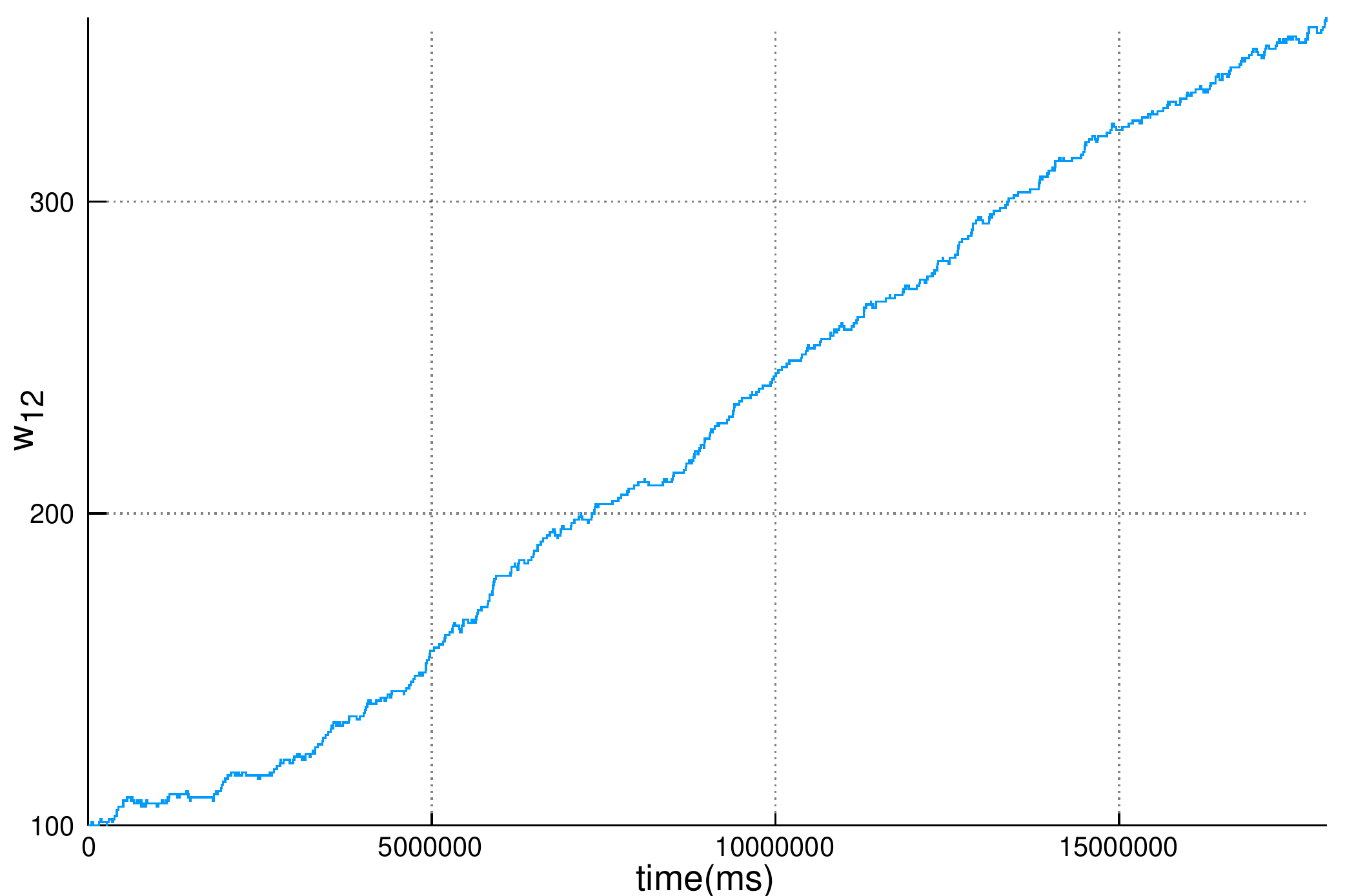}
			\hfill
			\includegraphics[height=5cm,width=8cm]{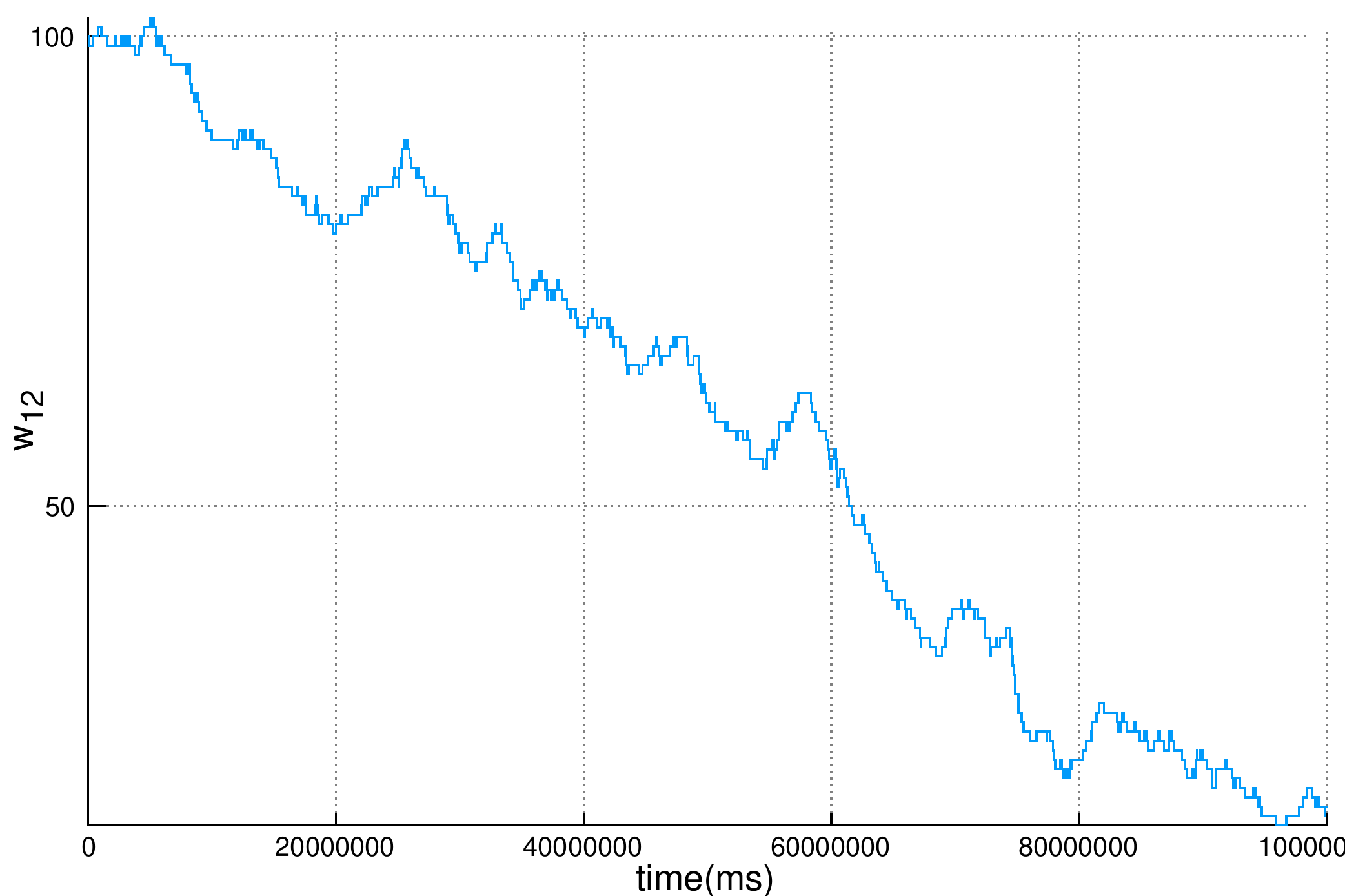}
			\caption{Evolution of the weight $w^{12}$ when $w^{21}$ is fixed at $15$ (left) and 30 (right) and $\epsilon=10^{-4}$}
		\end{figure}
		
		\begin{remarque}
			{\it We can even get divergence when $p^+(s)<p^-(s)$ for all $s\in \reels_+$}
		\end{remarque}
		
%
		\newpage	
		\subsubsection*{Example with 2 excitatory neurons}
		\vspace{0.5em}
		
		Let's apply this result in a network of 2 excitatory neurons. First, we denote $w = (w^{12},w^{21})$ since the diagonal elements are null. We are interested in the sign of the limit of $\sup_{\lVert w \rVert\geq r} (r^{ij}_+(w)-r^{ij}_-(w))$ which is equivalent to $\sup_{\lVert w \rVert\geq r} \left(\eta(w)\right)^{ij}$ (see~\ref{rem-eta}), when $r \rightarrow \infty$, in order to use corollary~\ref{cor-sup} to study stability of weights. We first show this limit exists and then compute it to determine parameters for which we don't have weights divergence.\\
		In order to show the existence of the limit, we first recall that $w$ is only present in neurons' rates. Thus, thanks to the sigmoid, these rates are bounded and when one of the components of $w$ goes to $\infty$, rates in which it plays a role tends to the upper bound of the sigmoid, $\alpha_M$, since all neurons are excitatory ones. For instance:
		\[\alpha_{1}(w,01)=\xi(w^{21})\underset{w^{21}\rightarrow\infty}{\longrightarrow}\alpha_M\]
		Therefore, we can separate the space $\reels_+\times\reels_+$ as following the intuition given by the graph of $\left(\eta(w)\right)^{12}$ for instance:\\
		
		\begin{figure}[h!]
			\centering
			\includegraphics[scale=0.4]{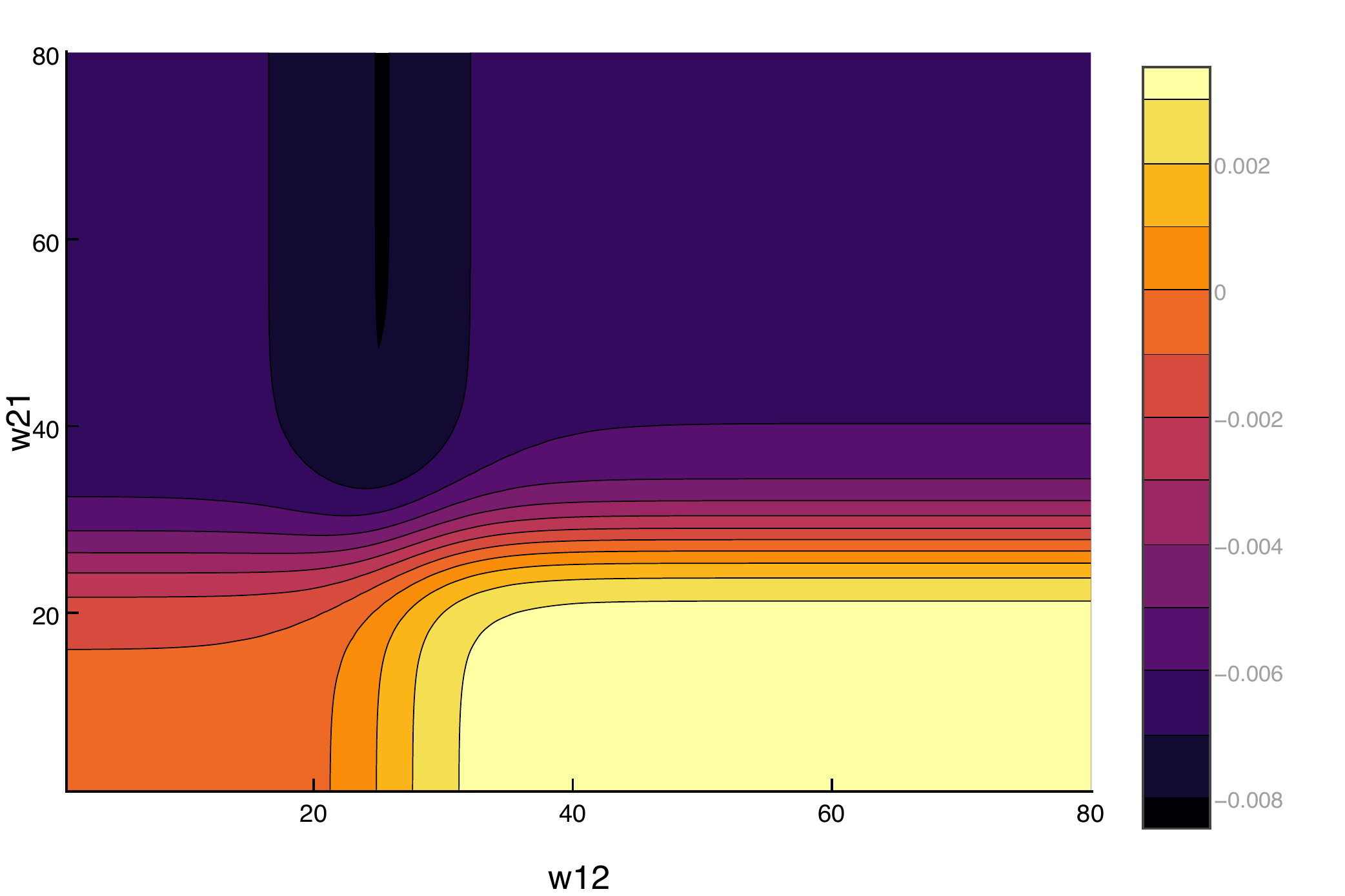}
			\caption{$\eta^{12}(w)$ when $A_+=A_-=0,8$ and $\tau_-=2\tau_+=34ms$}
		\end{figure}
		So the separation looks like this:
		\begin{figure}[h!]
			\centering
			\includegraphics[scale=0.3]{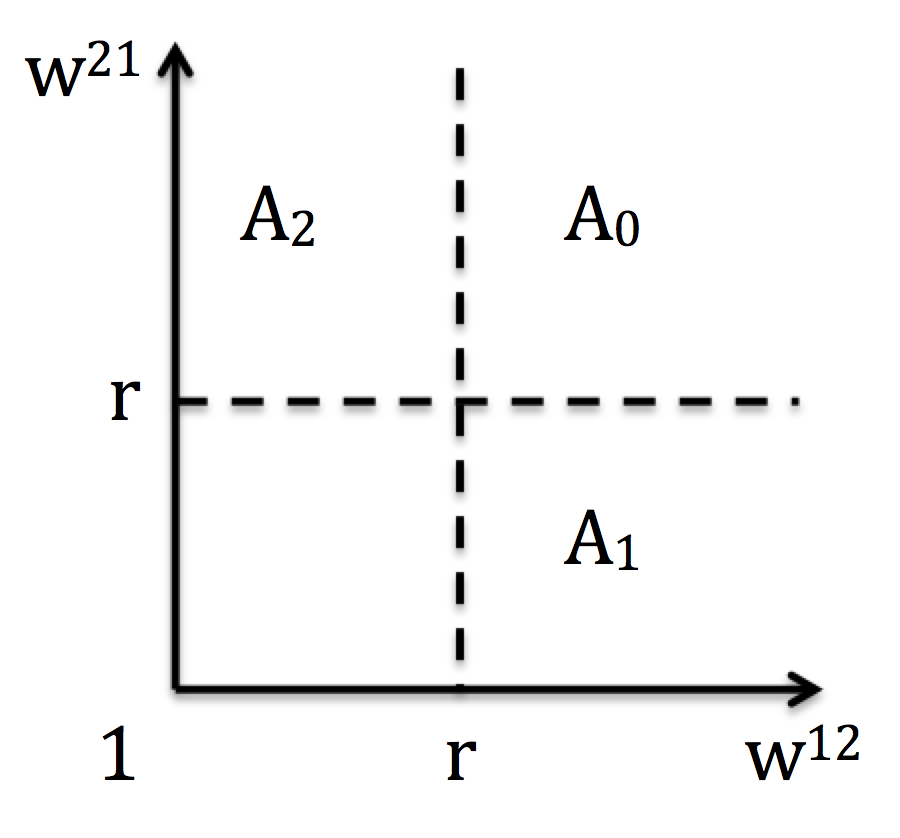}
		\end{figure}
		
		As showed in the appendix~\ref{uniqueness_dim2}, we can compute the Laplace transforms $\mathcal{L}\{\pi^w_v\}(\lambda_1,\lambda_2)$ for fixed $w$. If we introduce the dependence on $w$, it will be in rate terms such as $\alpha_{01}^{11}=\alpha_{1}(w,01)$ for example. As they are not numerous, we finish this kind of translation: $\alpha_{01}^{11}=\alpha_{1}(w,01)=\xi(w^{21})$, $\alpha_{10}^{11}=\alpha_{2}(w,10)=\xi(w^{12})$, $\alpha_{00}^{10}=\alpha_{00}^{01}=\xi(0)=\alpha_m$ and $\alpha_{01}^{00}=\alpha_{01}^{00}=\alpha_{11}^{10}=\alpha_{11}^{01}=\beta$. So we can rewrite $\eta$ as a function of $\alpha_{01}^{11}(w^{21})$ and $\alpha_{10}^{11}(w^{12})$. Therefore, when $r \rightarrow \infty$, $\eta(\alpha_{10}^{11}(w^{12}),\alpha_{01}^{11}(w^{21}))\rightarrow \eta(\alpha_M,\alpha_M)$ on $\mathcal{B}_0$. The sup of $\eta$ becomes $\sup_{\alpha_m \leq \alpha \leq \alpha_M}\eta(\alpha_M,\alpha)$ on $A_1$ and $\sup_{\alpha_m \leq \alpha \leq \alpha_M}\eta(\alpha,\alpha_M)$ on $A_2$. We conclude with 
		\[\lim_{r\ \infty}\sup_{\|w\|\geq r}\eta=\max\left(\sup_{\alpha_m \leq \alpha \leq \alpha_M}\eta(\alpha,\alpha_M),\sup_{\alpha_m \leq \alpha \leq \alpha_M}\eta(\alpha_M,\alpha)\right)\]
		We can compute numerically this limit in function of $A_-$ and $\tau_-$:
		\begin{figure}[h!]
			\centering
			\includegraphics[scale=0.6]{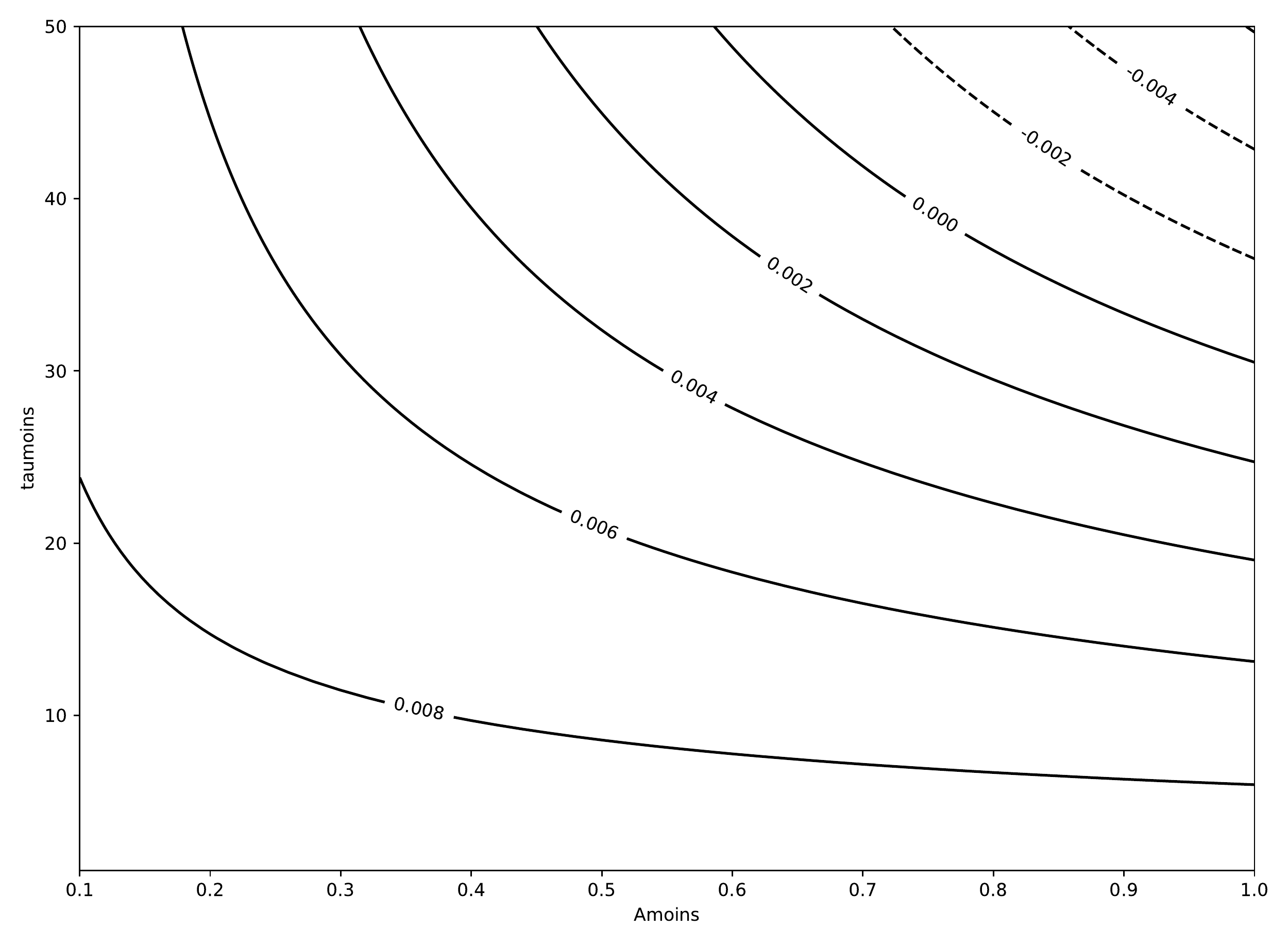}
			\caption{$\sup \eta^{12}$ when $\|w\|\rightarrow \infty$ for $A_+=0,2$ and $\tau_+=17ms$}
		\end{figure}
		\newpage
		We note that we need a really small value of $A_+$ compared to the one of $A_-$  to satisfy the condition of positive recurrence. However, such a difference doesn't seem to be needed in simulations. {\color{black}Indeed, we can have numerically positive recurrence for any parameters $A_+$ between 0 and 1. }
		\begin{remarque}
			{\it The condition for null recurrence given in~\cite{menshikov2016non} result in $\eta_{ij}=0$ for all $i,\ j$ in our case. Condition for transience leads to the exact opposite of the one of corollary~\ref{cor-sup}:
			\begin{align}
			\begin{split}
			\lim_{r\rightarrow +\infty}\sup_{w\in \Sigma,\lVert w\rVert\geq r}(\eta(w))^{ij} \geq 0,\ \forall i,j\\
			\text{And}\ \exists (k,l),\ j\neq i\ s.t.\ \lim_{r\rightarrow +\infty}\sup_{w\in \Sigma,\lVert w\rVert\geq r}(\eta(w))^{kl} > 0	
			\end{split}
			\end{align}
			It would be interesting to try to have a larger range of values of parameters for which we are in the null recurrence case, and we need another plasticity rule to do so (with the condition of~\cite{menshikov2016non}).}
		\end{remarque}
	\subsection*{10 neurons:}
	\noindent When depression is really higher than potentiation, weights seem to converge to a stationary distribution and have such trajectories:\\
	\begin{figure}[h!]
		\centering
		\includegraphics[scale=0.7]{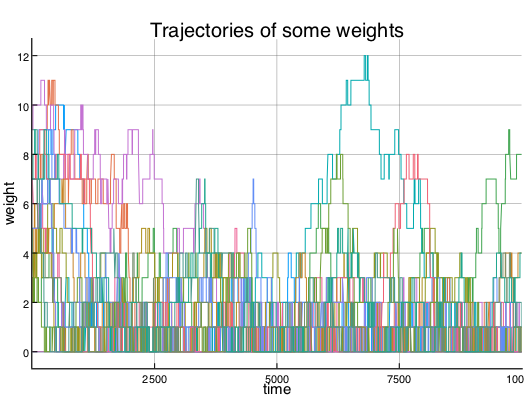}
	\end{figure}
	
	\begin{figure}[h!]
		\centering
		\includegraphics[scale=0.4]{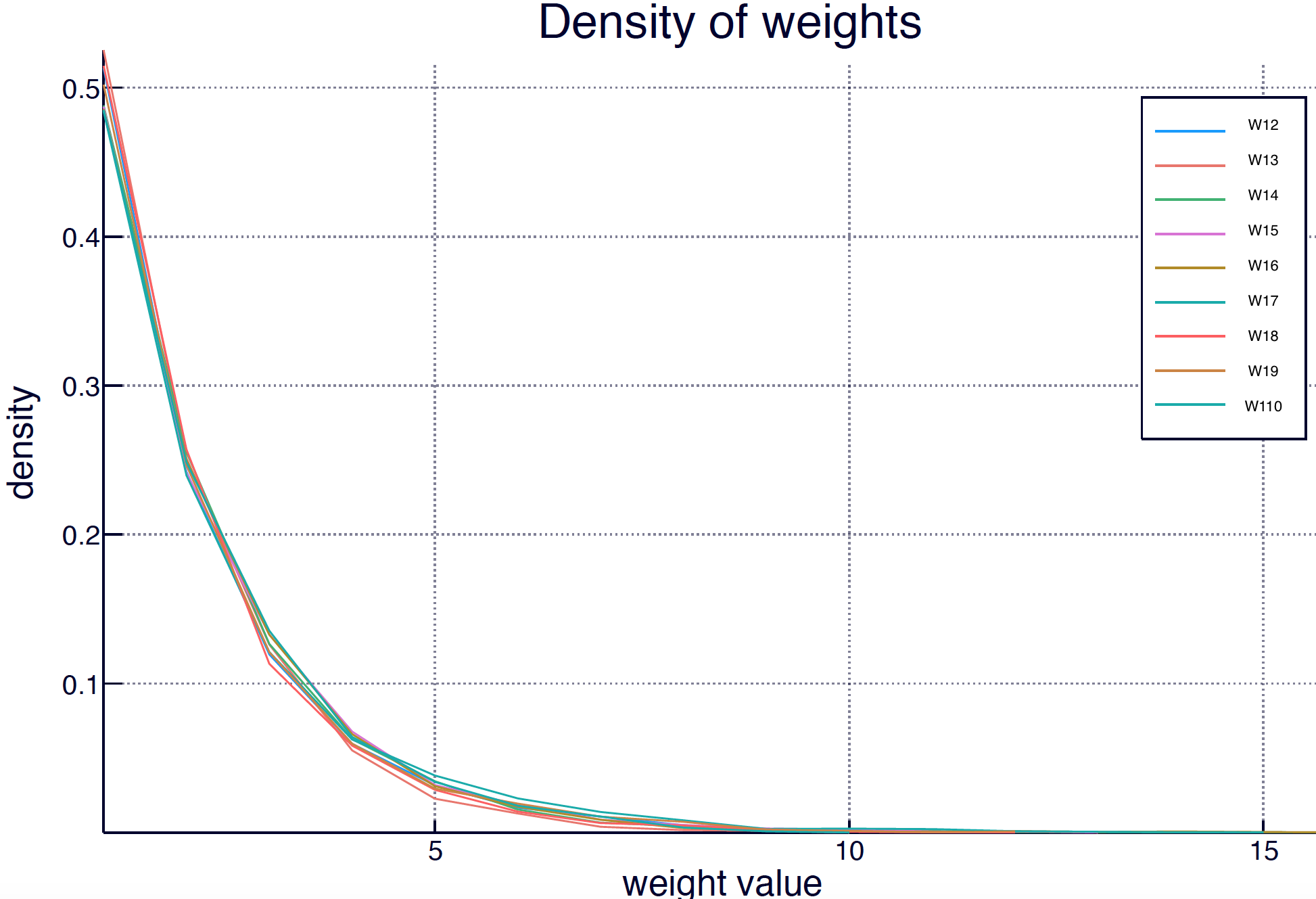}
	\end{figure}

	\newpage

	\noindent However, initial weights can play an important role. With parameters $A_+{=}0.8,\ A_-{=}0{.}9,\ \beta{=}1, \alpha_m{=}0.01, \alpha_M{=0.5}$ and $\epsilon{=}0.1$, we have no divergence in short time with low initial weights and selection of one weight from big initial ones, $\bf{W_0^{i1}=50}$:\\
	\begin{figure}[h!]
		\centering
		\includegraphics[height=6cm,width=10cm]{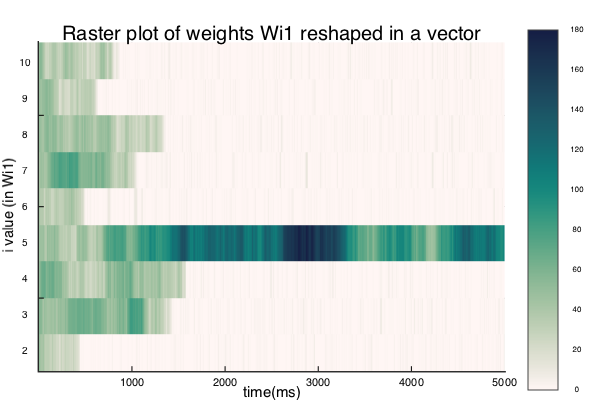}
	\end{figure}

	\noindent The selected weight is different from one trajectory to another.
	
	\begin{remarque}
		{\it We have chosen 10 neurons for plotting constraints. Thousands of them are easily simulated.}
	\end{remarque}
	This kind of phenomenon is called winner take all dynamics in~\cite{litwin_formation_2014} where they prevent them using iSTDP. The reason to avoid them is that it prevents new assemblies to be formed.
	\section{Discussion}
	\subsubsection*{Mathematical results}
	Based on a well known neural network model, we added plasticity in order to get insight on the combined neurons - weights dynamics. 
	We could analyse plasticity on the slow time scale of weights dynamics compared to the neurons ones, thus producing a simplified model. This latter gives the weights dynamics under the stationary distribution of the fast process and is a continuous time Markov jump process on the state space of weights with non homogeneous in space jump rates. Such processes are hard to deal with and current results are given in~\cite{menshikov2016non}. Moreover, even if we could prove existence and uniqueness of the invariant measure of the fast process, we were not able to express it explicitly. Thus, it is even harder to analyse the limit model. However, we can compute its Laplace transform in small networks, we didn't try more than 2 but it should not be too hard for more. The problem will nevertheless become quickly harder as it consists in inverting a $2^N$ square matrix for a given $w$ and as soon as $w$ change, this computation need to be done again. Here, making use of bounds on jump rates of neurons, we are able to give conditions of stability, but {\color{black}we emphasize it is only sufficient ones. To know if we need additive terms, depending on weights for instance or just hard bounds, in order to avoid divergence in the context of biological parameters is still under study.} 
	
	\subsubsection*{Simulation results}
	For small networks (2 neurons) and in the case of a STDP rule following the classical STDP curve \cite{bi_synaptic_1998}, we computed Laplace transform of the stationary distribution. We then gave explicit expression of jump rates for the limit process which enabled us to study the weight dynamics more precisely. We even show that the divergence of weights is possible even when integral of the learning window is biased towards synaptic depression, even when depression curve is always stronger than depression ($p^+(s)<p^-(s)$ for all $s$). Such a result is not intuitive and led us to find conditions on parameters for which such a divergence doesn't occur. Simulations with more than two neurons showed the winner take all phenomenon takes place. A calibration of parameters is needed to test more characteristics of the model: how does it respond to high frequence, low frequence? Does it enable bidirectional connections?... 
	
	\subsubsection*{Limitations of our model and future work}
	We are aware our neuron model is far from the reality of neurons. It is really simple in order to make the study of plasticity easier. Some questions raise when we try to match it with biology. For instance, what does $\beta$ represents? Many things at the same time: the time one neuron will influence others, the time of a spike as it will not be able to spike again until the moment it comes back to the state 0. Neurons are generally described through their membrane potential which has no link to our model. Then, observations such as potential depolarisation is needed to lead to potentiation cannot be checked or modelled. Moreover, the way their rate of jump from 0 to 1 depends on weights is not really clear and needs to be clarify, maybe there is a need to add delay as it is done in other papers~\cite{lajoie_correlation-based_2017}.
	
	While STDP seems good to keep in memory stimuli, even spontaneously after such inputs~\cite{litwin_formation_2014}, it needs to forget somehow. This seems not be the case in our model. Such a phenomenon is possible for instance under homeostatic mechanisms~\cite{turrigiano_dialectic_2017,zenke_temporal_2017,zenke_synaptic_2013,litwin_formation_2014}. STDP plays the role of additive synaptic scaling as when a weight increases, let say $w^{12}$, then $w^{21}$ decreases. It is not a good thing according to~\cite{turrigiano_dialectic_2017}, as they observed multiplicative synaptic scaling in their experiments. This is understandable as it is too specific and seems not sufficient. It is not useless if you think as information supported by $w^{21}$ is the exact opposite of the one supported by $w^{12}$, it enables neurons " to win time ". So there is a need to add homeostasis to our model. Metaplasticity or plastic inhibitory (iSTDP) neurons are the most used. Indeed, we studied only a network of excitatory neurons. Adding non plastic inhibitory neurons will just decrease the minimum of firing rates of neurons. However, plastic inhibitory neurons could prevent from divergence of weights. Finally, $w^{ii}=0$ is imposed but it could be interesting to use it as an homeostatic factor, decreasing the firing rate when it is to high and increasing it when it is weak.
	
	\subsubsection*{Relation to previous work}
	Analysis using the separation of time scale between weights dynamics and the network one has been done in many other articles~\cite{kempter_hebbian_1999,kempter_intrinsic_2001,burkitt_spike-timing-dependent_2004,gilson_emergence_2009,galtier_biological_2013,ocker_self-organization_2015,lajoie_correlation-based_2017}. They modelled neurons as Poisson, except for~\cite{ocker_self-organization_2015}, and derived a similar equation for weights on their slow time scale. This equation mainly depends on the cross correlation matrix which is not easy to handle with. They use Taylor expansion and Fourier transform to approximate it for their simulations. In our model, such a matrix is hidden in the invariant measure of the fast process. Concerning the stability of weights, a similar result was found in~\cite{kempter_intrinsic_2001} where "a stable fixed point of the output rate is possible if the integral over the learning window is sufficiently negative." As, in their model, rates are linear in weights, stability of rates is equivalent to weights stability. Even if it is not a necessary condition, we could give an idea of how much negative the integral over the learning window needs to be in order to have stability.
	
	
	\subsubsection*{Conclusion}
	We propose a new view on STDP models. In contrast with tiny deterministic jumps of weights, weights have some weak probability to make a "big" jump. Thus, instead of continuous, weights are discrete~\cite{amit_learning_1994,ribrault_stochasticity_2011}. Associated to the inter arrival time of spikes and the network state, we get a Markov process. We simplified it thanks to a separation of time scale and found simple conditions of positive recurrence. This work opens a new framework of study for plasticity which we hope it will give rise to more mathematical results on plasticity in the following.

%
%
%
%
%
%
%
	
\newpage
\appendix
\section*{Annexes}

\section{Dimension 2 for uniqueness}\label{uniqueness_dim2}

After giving the generator $(\mathcal{B},D(\mathcal{B}))$ in 2 dimensions, we then compute the equation satisfies by the Laplace transform of a given stationary distribution for $(S_t,V_t)$.

\subsubsection*{Generator}

\begin{proposition}
	$D(\mathcal{B})=\{f\in C_{ub}(E_2)\ and\ (\partial_{s_1}+\partial_{s_2})f \in C_{ub}(E_2)\}$ and $\forall f\in D(\mathcal{B})$:
	\begin{align*}
	\left \{
	\begin{array}{r c l}
	\mathcal{B}f(s,(0,0))=\alpha_{00}^{01}(f((s_1,0), (0,1))-f(s,(0,0)))+\alpha_{00}^{10}(f((0,s_2), (1,0))-f(s,(0,0)))+\sum_{1}^2\partial_{s_i}f(s,(0,0))\\ \\
	\mathcal{B}f(s,(0,1))=\alpha_{01}^{11}(f((0,s_2), (1,1))-f(s,(0,1)))+\beta (f(s,(0,0))-f(s,(0,1)))+\sum_{1}^2\partial_{s_i}f(s,(0,1))\ \ \ \ \ \ \ \ \\ \\
	\mathcal{B}f(s,(1,0))=\alpha_{10}^{11}(f((s_1,0), (1,1))-f(s,(1,0)))+\beta(f(s, (0,0))-f(s,(1,0)))+\sum_{1}^2\partial_{s_i}f(s,(1,0))\ \ \ \ \ \ \ \ \\ \\
	\mathcal{B}f(s,(1,1))=\beta(f(s, (0,1))-f(s,(1,1)))+\beta (f(s,(1,0))-f(s,(1,1)))+\sum_{1}^2\partial_{s_i}f(s,(1,1))\ \ \ \ \ \ \ \ \ \ \ \ \ \ \ \ 
	\end{array}
	\right.
	\end{align*}	
	Or in a shorter version: \[\mathcal{B}f(s,v)=((\partial_{s_1}+\partial_{s_2})f)(x)+\alpha_v^{(1-v_1,v_2)}[f((s_1v_1,s_2),(1-v_1,v_2))-f(x)]+\alpha_v^{(v_1,1-v_2)}[f
	((s_1,s_2v_2),(v_1,1-v_2))-f(x)]\]
\end{proposition}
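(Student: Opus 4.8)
The plan is to recognise the process $(S_t,V_t)_{t\geq 0}$ (with $w$ fixed, $N=2$) as a piecewise-deterministic Markov process (PDMP) in the sense of Davis and to read off both the domain and the action of its generator from the general theory, specialised to the present very regular situation. First I would record the three local characteristics. The deterministic flow is $\Phi_t(s,v)=(s_1+t,\,s_2+t,\,v)$, so that the associated vector field acts on test functions by $\mathcal{X}f=(\partial_{s_1}+\partial_{s_2})f$; this is where the term $(\partial_{s_1}+\partial_{s_2})f$ in the statement comes from. The jump rate is $\lambda(s,v)=\sum_{i=1}^2\big(\alpha_i(w,v)\delta_0(v^i)+\beta\delta_1(v^i)\big)$, which by Assumption~\ref{ass-jump-rate} satisfies $0<\lambda(s,v)\leq 2\alpha_M<\infty$ and depends on $v$ only. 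The transition kernel $Q((s,v),\cdot)$ is supported on the two points obtained by letting one neuron jump: if $v^i=0$ the state moves to $(s-s^ie_i,\,v+e_i)$ with probability $\alpha_i(w,v)/\lambda(v)$ (the coordinate $s^i$ is reset to $0$), and if $v^i=1$ it moves to $(s,\,v-e_i)$ with probability $\beta/\lambda(v)$ (here $s$ is unchanged).

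Next I would check that the standing hypotheses of Davis' theory hold and that they simplify here. Since each $S^i_t$ only increases along the flow and the flow is defined for all $t\geq 0$, the time to hit $\partial E_2$ is $t_*(x)=+\infty$ (as already observed before Proposition~\ref{prop-feller}); hence there is no active boundary and the boundary term in the PDMP generator is vacuous. Boundedness of $\lambda$ prevents explosion — the number of jumps on $[0,T]$ is stochastically dominated by a Poisson$(2\alpha_MT)$ variable — so the integrability condition $\esp_x\sum_{t\leq T}|f(X_t)-f(X_{t^-})|<\infty$ is automatic for bounded $f$. With these simplifications Davis' characterisation of the PDMP generator (Theorem~26.14 of~\cite{davis_markov_1993}) reduces the defining relation to $\mathcal{B}f=\mathcal{X}f+\lambda\int(f(y)-f(x))\,Q(x,dy)$, and the domain condition becomes that $t\mapsto f(\Phi_t(x))$ be absolutely continuous; combined with the Feller property (Proposition~\ref{prop-feller}), which already guarantees that the jump operator maps $C_{ub}(E_2)$ into $C_{ub}(E_2)$, this yields $D(\mathcal{B})=\{f\in C_{ub}(E_2):(\partial_{s_1}+\partial_{s_2})f\in C_{ub}(E_2)\}$ together with the compact "shorter version" formula, where $\alpha_v^{v'}=\alpha_i(w,v)$ when $v'=v+e_i$ and $\alpha_v^{v'}=\beta$ when $v'=v-e_i$, and the argument of $f$ after a firing has the corresponding $s$-coordinate set to $0$.

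To obtain the four displayed equations I would then simply substitute $v\in\{(0,0),(0,1),(1,0),(1,1)\}$ into the compact formula, writing $\alpha_{00}^{01}=\alpha_2(w,(0,0))$, $\alpha_{00}^{10}=\alpha_1(w,(0,0))$, $\alpha_{01}^{11}=\alpha_1(w,(0,1))$, $\alpha_{10}^{11}=\alpha_2(w,(1,0))$ for the firing rates and $\beta$ for every relaxation: for $v=(0,0)$ both neurons can only fire, so whichever fires resets its own $s$-coordinate, giving the two $\alpha_{00}^{\cdot}$ terms; for $v=(1,1)$ both can only relax at rate $\beta$ with $s$ untouched; and the mixed states $(0,1)$ and $(1,0)$ carry one firing term (with reset) and one relaxation term. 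This step is pure bookkeeping.

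The only genuinely delicate point — and the main obstacle — is the exact identification of the \emph{domain}: Davis' Theorem~26.14 describes the \emph{extended} (martingale-problem) generator, whereas the statement asserts the specific $C_{ub}$-domain of the \emph{strong} generator. Closing this gap requires the Feller property of the semigroup (Proposition~\ref{prop-feller}) together with a core/closedness argument showing that functions which are $C^1$ along the flow are dense enough; it is precisely here that boundedness of $\lambda$ and the absence of an active boundary must be used with care. Once that is in place, everything else in the proof is routine substitution.
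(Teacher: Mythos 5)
Your route is genuinely different from the paper's: the paper never invokes Davis' general PDMP generator theorem here, but instead computes $\lim_{t\to 0}t^{-1}\bigl(\esp_x f(X_t)-f(x)\bigr)$ by hand, expanding $\esp_{(s,v)}f(S_t,V_t)$ over the events ``no jump'', ``one jump to each neighbour'', plus $o(t)$, which yields the four displayed formulas and the inclusion $D(\mathcal{B})\subseteq\{f\in C_{ub}(E_2):(\partial_{s_1}+\partial_{s_2})f\in C_{ub}(E_2)\}$; the reverse inclusion is then proved by an explicit uniform estimate of the remainder $r_x(t)$, using the uniform continuity of $f$ and of $(1,1).\nabla_s f$ to get $r_x(t)\le\epsilon$ for $t$ small independently of $x$. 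Your identification of the local characteristics, of the rates $\alpha_{00}^{01}=\alpha_2(w,(0,0))$, $\alpha_{00}^{10}=\alpha_1(w,(0,0))$, $\alpha_{01}^{11}=\alpha_1(w,(0,1))$, $\alpha_{10}^{11}=\alpha_2(w,(1,0))$, of the reset of the relevant $s$-coordinate at a firing, and of the vacuity of the boundary term are all correct, and the substitution giving the four equations is indeed pure bookkeeping.

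The genuine gap is the one you yourself flag and then do not close: Davis' Theorem~26.14 characterises the \emph{extended} generator, whose domain (measurable functions absolutely continuous along the flow, with an integrability condition on the jumps) is strictly larger than $\{f\in C_{ub}(E_2):(\partial_{s_1}+\partial_{s_2})f\in C_{ub}(E_2)\}$, whereas the proposition asserts an exact identification of the domain of the strong ($C_{ub}$-) generator. Saying that this ``requires the Feller property together with a core/closedness argument'' names the obstacle without overcoming it: the Feller property gives continuity of $P_t f$, not the uniform convergence of the difference quotient, and no density or closedness argument is actually exhibited. What is needed — and what the paper supplies — is the direct two-sided argument: for $f$ with $f$ and $(\partial_{s_1}+\partial_{s_2})f$ in $C_{ub}(E_2)$, bound
\[
\Bigl|\tfrac{1}{t}\bigl(f(s+t,v)-f(s,v)\bigr)-(1,1).\nabla_s f(s,v)\Bigr|\le\sup_{0\le u\le t}\bigl|(1,1).\nabla_s f(s+u,v)-(1,1).\nabla_s f(s,v)\bigr|,
\]
which tends to $0$ uniformly in $s$ precisely because the directional derivative is \emph{uniformly} continuous, while the jump contributions are handled by uniform continuity of $f$ and boundedness of the rates; and conversely, membership in $D(\mathcal{B})$ forces the uniform existence of the directional derivative and its membership in $C_{ub}(E_2)$. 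Without carrying out this step, your proposal establishes the formula for the extended generator but not the claimed equality of domains, which is the actual content of the proposition beyond bookkeeping.
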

\vspace{1em}
\begin{proof}
	Let $f\in D(\mathcal{B})$, then by definition $\lim_{t\rightarrow 0} \frac{\esp_x(f(X_t)) -f(x)}{t}$ exists. Let's compute it. We know that each element $v\in I$ has only two neighbors (in the sens it can only reach two different states). We note $\alpha_v^{v'}$ the rates to reach the neighbor v'. We do the computations for $v=(0,1)$:
	\begin{align*}
	\esp_{(s,(0,1))}(f(S_t,V_t))&=\proba_{(s,(0,1))}(V_t=v)f((s_1+t,s_2+t),(0,1))+\proba_{(s,(0,1))}(V_t=(1,1))f((0,s_2+t),(1,1))\\&+\proba_{(s,(0,1))}(V_t=(0,0))f((s_1+t,s_2+t),(0,0))+o(t)\\
	&=\left(1-\left(\alpha_{01}^{11}+\alpha_{01}^{00}\right)t\ e^{-\left(\alpha_{01}^{11}+\alpha_{01}^{00}\right)t}\right)f((s_1+t,s_2+t),(0,1))
	\\&+\alpha_{01}^{11}t\ e^{-\alpha_{01}^{11}t}f((0,s_2+t),(1,1))+\alpha_{01}^{00}t\ e^{-\alpha_{01}^{00}t}f((s_1+t,s_2+t),(0,0))+o(t)
	\\& =f((s+t),(0,1))+\alpha_{01}^{11}(f((0,s_2+t), (1,1))-f(s+t,(0,1)))
	\\& +\beta (f(s+t,(0,0))-f(s+t,(0,1)))+o(t)
	\end{align*}
	Then we obtain:
	\begin{align*}
	\mathcal{B}f(x)=&\lim_{t\rightarrow 0} \frac{\esp_{(s,(0,1))}(f(X_t)) -f(x)}{t}
	\\=&\alpha_{01}^{11}(f((0,s_2), (1,1))-f(s,(0,1)))
	+\beta (f(s,(0,0))-f(s,(0,1)))+(1,1).\nabla_sf(s,(0,1))
	\end{align*}
	The same kind of computations gives us the same $\mathcal{B}f(x)$ as in the proposition $\forall x\in E_2$, and $D(\mathcal{B})\subseteq\{f\in C_{ub}(E_2)\ and\ (\partial_{s_1}+\partial_{s_2})f \in C_{ub}(E_2)\}$. In order to have the other inclusion, we take $f\in \{g,g\in C_{ub}(E_2)\ and\ (\partial_{s_1}+\partial_{s_2})g \in C_{ub}(E_2)\}$, then we compute for $x=(s,(0,1))\in E_2$:
	\begin{align*}
	&r_x(t)=\\&\left|\frac{\esp_x(f(X_t)) -f(x)}{t}-\left[\alpha_{01}^{11}(f((0,s_2), (1,1))-f(s,(0,1)))
	+\beta (f(s,(0,0))-f(s,(0,1)))+(1,1).\nabla_sf(s,(0,1))\right]\right|
	\end{align*}
	From previous computations, we see the jump terms will disappear because f is uniformly continuous, and the transport term will vanish as $t\rightarrow 0$ because $(1,1).\nabla_sf\in C_{ub}(E_2)$
	\begin{align*}
	&\left|\frac{f(s+t,(0,1))-f(s,(0,1))}{t}-(1,1).\nabla_sf(s,(0,1))\right|
	\\&\leq \frac{1}{t} \int_0^t\left|(1,1).\nabla_sf(s+u,(0,1))-(1,1).\nabla_sf(s,(0,1))\right|du
	\\&\leq\sup_{0\leq u\leq t}\left|(1,1).\nabla_sf(s+u,(0,1))-(1,1).\nabla_sf(s,(0,1))\right|
	\\&\leq\epsilon
	\end{align*}
	If t small enough.\\
	Hence, $\lim_{t\rightarrow 0} \frac{\esp_{(s,(0,1))}(f(X_t)) -f(x)}{t}$ exists. As we can do exactly the same computations for all $x\in E_2$, we deduce that $\{f\in C_{ub}(E_2)\ and\ (\partial_{s_1}+\partial_{s_2})f \in C_{ub}(E_2)\}\subseteq D(\mathcal{B})$. Thus, we have the equality wanted.
\end{proof}
We can see here the need to chose $C_{ub}(E_2)$ instead of $C_{b}(E_2)$ for instance. Indeed, the uniform continuity enable us to conclude on the domain of B and on another hand it is the biggest subspace of $L^{\infty}(E_2)$ on which the derivative is the generator of a $C_0$-semigroup. If we had chosen $C_0(E_2)=\{functions\ vanishing\ at\ \infty\}$, we see immediately the semigroup associated to our process will not map $C_0(E_2)$ into itself. $T_tf$ has no reason to vanish at $\infty$. $C_{ub}(E_2)$ seems to be the space that suits. Moreover, thanks to the portmanteau lemma, the knowledge of the semigroup on $C_{ub}(E_2)$ characterizes the law of the process. We can then use the definition~\ref{prop-inv-mes-gen} to search the Laplace transforms of invariant measures.

\subsubsection*{Laplace transform}

First, we show we can write any invariant measure of the process in the form $\pi(s,v)=\sum_{k\in I} \delta_{v_k}(v) \mu^w_k \pi_k(s)$ where $(\mu^w_1,...,\mu^w_N)$ is the only invariant measure of the jump process $(V_t)$ and $\pi_k$ is a measure on $\mathcal{B}(\reels_+^2)$. Then, we prove that if the process $(X_t)_{t\geq 0}$ has at least one invariant measure of probability $\pi$, then it is unique.\\

%

It is interesting to look at the form of invariant measures for the following. Indeed, as $(V_t)$ doesn't depend on $(S_t)$, we can study its dynamic and deduce a nice decomposition of the stationary distribution of $(X_t)$.
\begin{proposition}
	The jump process alone $(V_t)_{t\geq 0}$ has a unique invariant probability measure $\vec{\mu}=(\mu^w_{00},\mu^w_{01},\mu^w_{10},\mu^w_{11})^T$. Moreover, $\mu^w_v>0,\ \forall v\in I$, and it satisfies:
	\begin{align}\label{murel2}
	Q\vec{\mu}=\begin{bmatrix}
	-\alpha_{00}^{01}-\alpha_{00}^{10}&\ \beta&\ \beta &\ 0 \\ \\
	\alpha_{00}^{01} & \ -\alpha_{01}^{11}-\beta &\ 0 & \ \beta \\ \\
	\alpha_{00}^{10} & \ 0 & -\ \alpha_{10}^{11}-\beta & \ \beta \\ \\
	0 &\ \alpha_{01}^{11}& \ \alpha_{10}^{11} & -2\beta
	\end{bmatrix}
	\begin{bmatrix}
	\mu^w_{00}\\ \\
	\mu^w_{01} \\ \\
	\mu^w_{10} \\ \\
	\mu^w_{11}
	\end{bmatrix}	
	=0
	\end{align}
\end{proposition}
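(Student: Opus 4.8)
The plan is to invoke the classical theory of finite continuous-time Markov chains, exactly as was done for the general $N$ case in the uniqueness argument above. First I would observe that $(V_t)_{t\ge 0}$ is by construction a continuous-time Markov jump process on the finite state space $I=\{0,1\}^2$ whose generator is $\mathcal{B}_0$ from~\eqref{def_B_0} specialised to $N=2$. Reading off the jump terms gives precisely the transition rates: from $(0,0)$ one moves to $(0,1)$ at rate $\alpha_{00}^{01}$ and to $(1,0)$ at rate $\alpha_{00}^{10}$; from $(0,1)$ to $(1,1)$ at rate $\alpha_{01}^{11}$ and to $(0,0)$ at rate $\beta$; from $(1,0)$ to $(1,1)$ at rate $\alpha_{10}^{11}$ and to $(0,0)$ at rate $\beta$; from $(1,1)$ to $(0,1)$ and to $(1,0)$ each at rate $\beta$. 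Collecting these into a matrix whose $j$-th column carries the out-rates of state $j$ (with the negative total exit rate on the diagonal) yields exactly the matrix $Q$ displayed in~\eqref{murel2}, and each column of $Q$ sums to zero, as it must for a conservative generator.

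Next I would establish irreducibility. Since all neurons are connected, Assumption~\ref{ass-jump-rate} guarantees every rate $\alpha_{00}^{01},\alpha_{00}^{10},\alpha_{01}^{11},\alpha_{10}^{11}$ is at least $\alpha_m>0$ and every down-rate equals $\beta>0$, so the directed transition graph on the four states is strongly connected (for instance $(0,0)\to(0,1)\to(1,1)\to(1,0)\to(0,0)$). Hence $(V_t)_{t\ge 0}$ is an irreducible chain on a finite state space, therefore positive recurrent, and Theorem 1.7.7 of~\cite{norris_markov_1998} applies to give a unique invariant probability measure $\vec\mu=(\mu^w_{00},\mu^w_{01},\mu^w_{10},\mu^w_{11})^T$. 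Because every state of an irreducible positive recurrent chain is itself positive recurrent, one also gets $\mu^w_v>0$ for each $v\in I$.

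Finally, invariance of $\vec\mu$ for the semigroup is equivalent to $\sum_{v\in I}\mathcal{B}_0 g(v)\mu^w_v=0$ for all $I$-measurable $g$, which upon testing against the indicator functions $g=\mathbb{1}_{\vec v_k}$ is exactly the linear system $Q\vec\mu=0$ with $Q$ as displayed; this is the same manipulation already used to obtain~\eqref{rel-jump}. There is essentially no analytic difficulty here: the only point demanding care is the bookkeeping that translates the abstract rates $\alpha_i(w,\cdot)$ appearing in $\mathcal{B}_0$ into the entries $\alpha_{00}^{01}$, $\alpha_{01}^{11}$, and so on, and checking that the resulting matrix is literally the transpose of the rate matrix, so that the balance equations take the stated form $Q\vec\mu=0$ rather than $\vec\mu^{T}Q=0$.
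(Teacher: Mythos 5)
Your proposal is correct and follows essentially the same route as the paper: irreducibility (all rates bounded below by $\alpha_m>0$ and $\beta>0$) plus finiteness of the state space gives positive recurrence, hence a unique strictly positive invariant probability measure by Theorem 1.7.7 of~\cite{norris_markov_1998}, and the balance equations $Q\vec\mu=0$ follow from the kernel characterization of the rate matrix. Your extra care about the column-versus-row convention (the displayed $Q$ being the transpose of the standard Q-matrix) is a worthwhile clarification of a point the paper glosses over, but it does not change the argument.
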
 
\begin{proof}
	Indeed, as each neuron is connected to each other, $(V_t)_{t\geq 0}$ is irreducible. As its state space is finite, the process is also positive recurrent so has a unique invariant probability measure $\mu^w$ by theorem1.7.7 in~\cite{norris_markov_1998}.\\
	Moreover, as each state is positive recurrent, $\mu^w_v>0,\ \forall v\in I$.\\
	The matrix $Q$ is the matrix of transition rates (Q-matrix) of $(V_t)_{t\geq 0}$. With $1=(0,0),\ 2=(0,1),\ 3=(1,0),\ 4=(1,1)$, and $Q=(q_{ij})_{1\leq i,j\leq4}$ we have Q has in the proposition. As $\mu^w$ is invariant, it belongs to the kernel of Q, which is \eqref{murel2}, Theorem 3.5.5 in~\cite{norris_markov_1998}.
\end{proof}
\vspace{1em}
From this result, we deduce that $\forall k\in I,\ \ \int_{\reels_+^2}\pi(ds,k)=\mu^w_k$. Therefore, we define $\pi_k$ as $\pi_k(A)=\frac{\int_{A}\pi(ds,k)}{\mu^w_k}$, $\forall A\in \mathcal{B}(\reels_+^2)$. Hence, $\pi(s,v)=\sum_{k\in I} \delta_{v_k}(v) \mu^w_k \pi_k(s)$. \\

Now, we previously showed the process $(X_t)_{t\geq 0}$ has at least one invariant probability measure on $E_2$, let $\pi$ be one of them and let's compute its Laplace transform to show the following proposition:
\begin{proposition}
	Assume the process $(X_t)_{t\geq 0}$ has at least one invariant measure of probability $\pi$. Then it is unique.
	%
\end{proposition}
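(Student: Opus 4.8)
The plan is to specialise the Laplace‑transform argument used above for general $N$ to the case $N=2$, where every linear system involved is $4\times4$ and is either triangular or explicitly diagonally dominant, so that the computation closes. Write $I=\{00,01,10,11\}$ enumerated by the number of active neurons, and use the decomposition $\pi(ds,v)=\sum_{k\in I}\delta_{\vec{v}_k}(v)\,\mu^w_k\,\pi_k(ds)$ established just above, where $\vec{\mu}=(\mu^w_{00},\mu^w_{01},\mu^w_{10},\mu^w_{11})$ is the unique, strictly positive, invariant law of $(V_t)_{t\ge0}$ solving \eqref{murel2}. Since $D(\mathcal{B})$ is separating, Proposition~\ref{prop-inv-mes-gen} says $\pi$ is invariant if and only if $\int_{E_2}\mathcal{B}f\,d\pi=0$ for all $f\in D(\mathcal{B})$, so it suffices to show that this scalar family of equations determines $\pi$ uniquely.

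First I would insert the test functions $f(s,v)=e^{-\lambda_1 s_1-\lambda_2 s_2}\,\mathbb{1}_{\vec{v}_j}(v)$, $j\in I$ (these lie in $D(\mathcal{B})$ for $\lambda_1,\lambda_2\ge0$, being bounded, uniformly continuous, with bounded uniformly continuous transport derivative), and compute $\mathcal{B}f$ from the explicit two-dimensional generator: the transport part gives $-(\lambda_1+\lambda_2)e^{-\lambda\cdot s}$, a $1\to0$ jump of neuron $i$ gives terms proportional to $e^{-\lambda\cdot s}$, and a $0\to1$ jump of neuron $i$ resets $s_i$ to $0$ and hence gives a term proportional to $e^{-\lambda\cdot(s-s_i e_i)}$. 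Integrating against $\pi$ and letting $j$ range over $I$ produces the matrix identity \eqref{matrix-lap}, namely $\Gamma(\lambda)\big(\mathcal{L}(\pi_k)(\lambda)\big)_{k\in I}=\Lambda(\lambda)$, where by \eqref{def-Gam-Lam} and the balance relation \eqref{murel2} the matrix $\Gamma(\lambda)$ is \emph{upper triangular} with strictly positive diagonal $\Gamma_{jj}(\lambda)=\big(\sum_i\beta\delta_1(v_j^i)+\alpha_i(\vec{v}_j)\delta_0(v_j^i)+\lambda_1+\lambda_2\big)\mu^w_j$, while $\Lambda_j(\lambda)$ depends on $\lambda$ only through the values $\mathcal{L}(\pi_k)(\widehat{\lambda}_i)$ with $\widehat{\lambda}_1=(0,\lambda_2)$ and $\widehat{\lambda}_2=(\lambda_1,0)$. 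In particular $\Gamma(\lambda)$ is invertible for every $\lambda\in\reels_+^2$.

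The recursion then terminates after a single step, because in dimension $2$ the vectors $\widehat{\lambda}_i$ already have a zero coordinate. Evaluating \eqref{matrix-lap} at $\widehat{\lambda}_1$ and at $\widehat{\lambda}_2$, and using that each $\pi_k$ is a probability measure so $\mathcal{L}(\pi_k)(0,0)=1$, the right-hand side $\Lambda(\widehat{\lambda}_l)$ splits into a constant vector plus a term linear in $\big(\mathcal{L}(\pi_k)(\widehat{\lambda}_l)\big)_k$; moving the latter to the left yields a system $D(\widehat{\lambda}_l)\big(\mathcal{L}(\pi_k)(\widehat{\lambda}_l)\big)_k=\Lambda^{(l)}$ with $\Lambda^{(l)}\in\reels^4$ constant, exactly as in Step~2 of the general proof above. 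One checks from \eqref{murel2} that $D(\widehat{\lambda}_l)$ is strictly diagonally dominant as soon as the surviving coordinate is positive (and when that coordinate is $0$ the Laplace values are already the known constant $1$), hence invertible in the range needed; so each $\mathcal{L}(\pi_k)(\widehat{\lambda}_l)$ is uniquely determined, and substituting these into \eqref{matrix-lap} and inverting the triangular $\Gamma(\lambda)$ determines $\mathcal{L}(\pi_k)(\lambda)$ uniquely for all $\lambda\in\reels_+^2$ and all $k$. By injectivity of the Laplace transform on finite Borel measures, each $\pi_k$, and hence $\pi=\sum_k\delta_{\vec{v}_k}\mu^w_k\pi_k$, is unique. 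I expect the only real work to be bookkeeping: writing out the four expressions $\mathcal{B}f(\cdot,\vec{v}_j)$, reading off the entries of $\Gamma$, $D$ and the constants $\Lambda^{(l)}$, and checking diagonal dominance from \eqref{murel2}; there is no analytic obstacle, since the triangular and diagonally dominant structure makes every inversion completely explicit.
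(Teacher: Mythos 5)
Your proposal is correct and follows essentially the same route as the paper's appendix proof: decompose $\pi$ via the invariant law $\vec{\mu}$ of the jump chain, test against $e^{-\lambda_1 s_1-\lambda_2 s_2}\mathbb{1}_{\vec{v}_j}(v)$, obtain a triangular $4\times4$ system expressing $\mathcal{L}\pi(\lambda_1,\lambda_2)$ in terms of $\mathcal{L}\pi(\lambda_1,0)$ and $\mathcal{L}\pi(0,\lambda_2)$, close these by diagonally dominant systems whose dominance follows from the balance relation \eqref{murel2}, and conclude by injectivity of the Laplace transform. The only difference is notational (you reuse the $\Gamma$, $D$, $\widehat{\lambda}_i$ machinery of the general-$N$ proof where the paper writes out $M$, $M_1$, $M_2$ explicitly), which changes nothing of substance.
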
 

\begin{proof}
	We will show that all invariant measure of probability has the same Laplace transform and as the later characterizes it, see for instance Theorem 4.3 in~\cite{kallenberg_foundations_2006}, there only exists one invariant measure of probability. \\
	
	We can write $\pi$ as $\pi(A,v)=\sum_{k\in I}\pi_k(A) \otimes \mu^w_k \delta_{k}(v),\ \ \forall A\in \mathcal{B}(R_+^2)$, with $\pi_k(A)=\pi(A,k)(\mu^w_k)^{-1}$. To simplify computations, we will denote by $\mathcal{L}\pi$ be the vector of Laplace transforms of $\pi_k$. So $\forall \lambda_1, \lambda_2\in \reels_+$:
	\[\mathcal{L}\pi(\lambda_1,\lambda_2)=\left[ \begin {array}{c} {\it \mathcal{L}\pi_{00}} \left( \lambda_1,\lambda_2 \right) 
	\\ \noalign{\medskip}{\it \mathcal{L}\pi_{01}} \left( \lambda_1,\lambda_2 \right) 
	\\ \noalign{\medskip}{\it \mathcal{L}\pi_{10}} \left( \lambda_1,\lambda_2 \right) 
	\\ \noalign{\medskip}{\it \mathcal{L}\pi_{11}} \left( \lambda_1,\lambda_2 \right) \end {array}
	\right]\ \ where\ \ \forall v\in I,\ \ \mathcal{L}\pi_v(\lambda_1,\lambda_2)=\int_{\reels_+^2}e^{-(\lambda_1s_1+\lambda_2s_2)}\pi_v(ds)\]
	Just a remark, $\forall v\in I$
	\[\mathcal{L}\pi_v(0,0)=\int_{\reels_+^2}\pi_v(ds)=(\mu^w_v)^{-1}\int_{\reels_+^2}\pi(ds,v)=1\]
	As we want to compute the Laplace transform of $\pi$ which is in fact $(\lambda_1,\lambda_2)\mapsto\sum_{v\in I}\mu^w_v\mathcal{L}\pi_v(\lambda_1,\lambda_2)$, let's use the following test functions, with $\lambda=(\lambda_1,\lambda_2)$ and $\forall\ k\in I$: 
	\[
	e_{\lambda}^{k}(s,v)=e^{- (\lambda_1s_1+\lambda_2s_2)}\delta_{k}(v)
	\]
	
	By definition~\ref{prop-inv-mes-gen} of an invariant measure we get $\forall v\in I$:
	\begin{align}\label{relpimu}
	\sum_{k\in I}\int_{{\reels_+}^2}\mathcal{B}e_{\lambda}^{v}(s,k)\mu^w_k\pi_k(ds)=0
	\end{align}
	
	We then compute $\mathcal{B}e_{\lambda}^{k}(s,v)$:
	\begin{align*}
	\mathcal{B}e_{\lambda}^{00}(s,(0,0))&=(-\alpha_{00}^{01}-\alpha_{00}^{10}-(\lambda_1+\lambda_2))e^{- \lambda_1s_1- \lambda_2s_2} 
	\\
	\mathcal{B}e_{\lambda}^{00}(s,(0,1))&=\beta e^{- \lambda_1s_1- \lambda_2s_2}\\
	\mathcal{B}e_{\lambda}^{00}(s,(1,0))&=\beta e^{- \lambda_1s_1- \lambda_2s_2}\\
	\mathcal{B}e_{\lambda}^{00}(s,(1,1))&=0
	\\
	\\
	\mathcal{B}e_{\lambda}^{01}(s,(0,0))&=\alpha_{00}^{01}e^{- \lambda_1s_1}
	\\
	\mathcal{B}e_{\lambda}^{01}(s,(0,1))&=(-\alpha_{12}-\beta-(\lambda_1+\lambda_2))e^{- \lambda_1s_1- \lambda_2s_2}\\
	\mathcal{B}e_{\lambda}^{01}(s,(1,0))&=0\\
	\mathcal{B}e_{\lambda}^{01}(s,(1,1))&=\beta e^{- \lambda_1s_1- \lambda_2s_2}\\
	\\
	\mathcal{B}e_{\lambda}^{10}(s,(0,0))&=\alpha_{00}^{10}e^{- \lambda_2s_2}
	\\
	\mathcal{B}e_{\lambda}^{10}(s,(0,1))&=0\\
	\mathcal{B}e_{\lambda}^{10}(s,(1,0))&=(-\alpha_{23}-\beta-(\lambda_1+\lambda_2))e^{- \lambda_1s_1- \lambda_2s_2}\\
	\mathcal{B}e_{\lambda}^{10}(s,(1,1))&=\beta  e^{- \lambda_1s_1- \lambda_2s_2}
	\end{align*}
	\begin{align*}
	\mathcal{B}e_{\lambda}^{11}(s,(0,0))&=0
	\\
	\mathcal{B}e_{\lambda}^{11}(s,(0,1))&=\alpha_{01}^{11}e^{- \lambda_2s_2}\\
	\mathcal{B}e_{\lambda}^{11}(s,(1,0))&=\alpha_{10}^{11}e^{- \lambda_1s_1}\\
	\mathcal{B}e_{\lambda}^{11}(s,(1,1))&=(-2\beta-(\lambda_1+\lambda_2))e^{- \lambda_1s_1- \lambda_2s_2}\\
	\end{align*}
	
	So with \eqref{relpimu} and $v=(0,0)$ for instance:
	\begin{align*} 
	\sum_{k\in I}\int_{{\reels_+}^2}\mathcal{B}e_{\lambda}^{00}&(s,k)\mu^w_k\pi_k(ds)=0\\
	&\Leftrightarrow\\
	\left(-\alpha_{00}^{01}-\alpha_{00}^{10}-(\lambda_1+\lambda_2)\right)\mu^w_1&\mathcal{L}\pi_{00}(\lambda_1,\lambda_2)+\beta\mu^w_2\mathcal{L}\pi_{01}+\beta\mu^w_3\mathcal{L}\pi_{10}=0
	\end{align*}
	After computations for all $v\in I$ we get:
	\begin{align} \label{eqTL}
	M(\lambda_1,\lambda_2)\ \left[ \begin {array}{c} {\it \mathcal{L}\pi_{00}} \left( \lambda_1,\lambda_2 \right) 
	\\ \noalign{\medskip}{\it \mathcal{L}\pi_{01}} \left( \lambda_1,\lambda_2 \right) 
	\\ \noalign{\medskip}{\it \mathcal{L}\pi_{10}} \left( \lambda_1,\lambda_2 \right) 
	\\ \noalign{\medskip}{\it \mathcal{L}\pi_{11}} \left( \lambda_1,\lambda_2 \right) \end {array}
	\right] =\left[ \begin {array}{c} 0\\ \noalign{\medskip}-{\it \alpha_{00}^{01}\ \mathcal{L}\pi_{00}} \left( \lambda_1,0
	\right)\frac {{\it \mu^w_1}}{{\it \mu^w_2}} \\ \noalign{\medskip}-{\it \alpha_{00}^{10} \mathcal{L}\pi_{00}} \left( 0,\lambda_2 \right) \frac {{\it \mu^w_1}}{{\it \mu^w_3}}
	\\ \noalign{\medskip}{\it -\alpha_{01}^{11}}\,{\it \mathcal{L}\pi_{01}} \left( 0,\lambda_2 \right)\frac {{\it \mu^w_2}}{{\it \mu^w_4}} -{\it 
		\alpha_{10}^{11}}\,{\it \mathcal{L}\pi_{10}} \left( \lambda_1,0 \right)\frac {{\it \mu^w_3}}{{\it \mu^w_4}} \end {array} \right]
	\end{align}
	With:
	\begin{align*}
	M(\lambda_1,\lambda_2)=\left[ \begin {array}{cccc} -{\it \alpha_{00}^{10}}-{\it \alpha_{00}^{01}}-{\it \lambda_1}-{\it 
		\lambda_2}&{\it \beta\frac {{\it \mu^w_2}}{{\it \mu^w_1}}}&{\it \beta\frac {{\it \mu^w_3}}{{\it \mu^w_1}}}&0\\ \noalign{\medskip}0&-{\it \alpha_{01}^{11}}-{
		\it \beta}-{\it \lambda_1}-{\it \lambda_2}&0&{\it \beta\frac {{\it \mu^w_4}}{{\it \mu^w_2}}}
	\\ \noalign{\medskip}0&0&-{\it \alpha_{10}^{11}}-{\it \beta}-{\it \lambda_1}-{\it 
		\lambda_2}&{\it \beta\frac {{\it \mu^w_4}}{{\it \mu^w_3}}}\\ \noalign{\medskip}0&0&0&-2\,{\it \beta}-{\it 
		\lambda_1}-{\it \lambda_2}\end {array} \right]
	\end{align*}
	As we have $\mathcal{L}\pi(\lambda_1,0)=\left[ \begin {array}{c} {\it \mathcal{L}\pi_{00}} \left( \lambda_1,0 \right) 
	\\ \noalign{\medskip}{\it \mathcal{L}\pi_{01}} \left( \lambda_1,0 \right) 
	\\ \noalign{\medskip}{\it \mathcal{L}\pi_{10}} \left( \lambda_1,0 \right) 
	\\ \noalign{\medskip}{\it \mathcal{L}\pi_{11}} \left( \lambda_1,0 \right) \end {array}
	\right]$ and $\mathcal{L}\pi(0,\lambda_2)=\left[ \begin {array}{c} {\it \mathcal{L}\pi_{00}} \left( 0,\lambda_2 \right) 
	\\ \noalign{\medskip}{\it \mathcal{L}\pi_{01}} \left( 0,\lambda_2 \right) 
	\\ \noalign{\medskip}{\it \mathcal{L}\pi_{10}} \left( 0,\lambda_2 \right) 
	\\ \noalign{\medskip}{\it \mathcal{L}\pi_{11}} \left( 0,\lambda_2 \right) \end {array}
	\right]$.
	\\
	\\Then we can get $\mathcal{L}\pi(\lambda_1,0)$ and $\mathcal{L}\pi(0,\lambda_2)$ evaluating \eqref{eqTL} in $\lambda_1=0$ and $\lambda_2=0$:
	\[M(\lambda_1,0)\ \mathcal{L}\pi(\lambda_1,0)=\left[ \begin {array}{c} 0\\ \noalign{\medskip}-{\it \alpha_{00}^{01}\ \mathcal{L}\pi_{00}} \left( \lambda_1,0
	\right)\frac {{\it \mu^w_1}}{{\it \mu^w_2}} \\ \noalign{\medskip}-{\it \alpha_{00}^{10} \mathcal{L}\pi_{00}} \left( 0,0 \right) \frac {{\it \mu^w_1}}{{\it \mu^w_3}}
	\\ \noalign{\medskip}{\it -\alpha_{01}^{11}}\,{\it \mathcal{L}\pi_{01}} \left( 0,0 \right)\frac {{\it \mu^w_2}}{{\it \mu^w_4}} -{\it 
		\alpha_{10}^{11}}\,{\it \mathcal{L}\pi_{10}} \left( \lambda_1,0 \right)\frac {{\it \mu^w_3}}{{\it \mu^w_4}} \end {array} \right]\]
	As $\forall v\in I$, $\mathcal{L}\pi_v(0,0)=\int_{\reels^{+^2}}\pi_v(ds_1,ds_2)=1$ so:
	\[M(\lambda_1,0)\ \mathcal{L}\pi(\lambda_1,0)=\begin{bmatrix}
	0&0&0&0\\
	-\frac{\alpha_{00}^{01}\mu^w_1}{\mu^w_2}&0&0&0\\
	0&0&0&0\\
	0&0&-\frac{\alpha_{10}^{11}\mu^w_3}{\mu^w_4}&0
	\end{bmatrix}\mathcal{L}\pi(\lambda_1,0)+\begin{bmatrix}0\\0\\-\frac{\alpha_{00}^{10}\mu^w_1}{\mu^w_3}\\-\frac{\alpha_{01}^{11}\mu^w_2}{\mu^w_4}\end{bmatrix}\]
	And
	\[M(0,\lambda_2)\ \mathcal{L}\pi(0,\lambda_2)=\begin{bmatrix}
	0&0&0&0\\
	0&0&0&0\\
	-\frac{\alpha_{00}^{10}\mu^w_1}{\mu^w_3}&0&0&0\\
	0&-\frac{\alpha_{01}^{11}\mu^w_2}{\mu^w_4}&0&0
	\end{bmatrix}\mathcal{L}\pi(\lambda_1,0)+\begin{bmatrix}0\\-\frac{\alpha_{00}^{01}\mu^w_1}{\mu^w_2}\\0\\-\frac{\alpha_{10}^{11}\mu^w_3}{\mu^w_4}\end{bmatrix}\]
	Putting terms in $T_i(\lambda_i)$ in matrices marked $M_i(\lambda_i)$ we get:
	\begin{align}\label{partial-TL}
	M_1(\lambda_1) \mathcal{L}\pi(\lambda_1,0) =\left[ \begin {array}{c} 0\\ \noalign{\medskip}-{\it \alpha_{00}^{01}} \frac {{\it \mu^w_1}}{{\it \mu^w_2}} \\ \noalign{\medskip}0
	\\ \noalign{\medskip}-{\it 
		\alpha_{10}^{11}}\frac {{\it \mu^w_3}}{{\it \mu^w_4}} \end {array} \right]
	\ \ \ and\ \ \ 
	M_2(\lambda_2) \mathcal{L}\pi(0,\lambda_2) =\left[ \begin {array}{c} 0\\ \noalign{\medskip}0\\ \noalign{\medskip}-{\it \alpha_{00}^{10}} \frac {{\it \mu^w_1}}{{\it \mu^w_3}} 
	\\ \noalign{\medskip}-{\it 
		\alpha_{01}^{11}}\frac {{\it \mu^w_2}}{{\it \mu^w_4}} \end {array} \right]
	\end{align}
	With:
	\begin{align*}
	M_1(\lambda_1)=\left[ \begin {array}{cccc} -{\it \alpha_{00}^{10}}-{\it \alpha_{00}^{01}}-\lambda_1&{\frac {\beta\,\mu
			2}{\mu1}}&{\frac {\beta\,\mu3}{\mu1}}&0\\ \noalign{\medskip}{\frac {{
				\it \alpha_{00}^{01}}\,\mu1}{\mu2}}&-{\it \alpha_{01}^{11}}-\beta-\lambda_1&0&{\frac {\beta\,\mu4}{\mu2}
	}\\ \noalign{\medskip}0&0&-{\it \alpha_{10}^{11}}-\beta-\lambda_1&{\frac {\beta\,\mu4}{\mu3
	}}\\ \noalign{\medskip}0&0&{\frac {{\it \alpha_{10}^{11}}\,\mu3}{\mu4}}&-2\,\beta-\lambda_1
	\end {array} \right]
	\end{align*}
	And
	\begin{align*}
	M_2(\lambda_2)= \left[ \begin {array}{cccc} -{\it \alpha_{00}^{10}}-{\it \alpha_{00}^{01}}-\lambda_2&{\frac {\beta\,\mu
			2}{\mu1}}&{\frac {\beta\,\mu3}{\mu1}}&0\\ \noalign{\medskip}0&-{\it 
		\alpha_{01}^{11}}-\beta-\lambda_2&0&{\frac {\beta\,\mu4}{\mu2}}\\ \noalign{\medskip}{\frac 
		{{\it \alpha_{00}^{10}}\,\mu1}{\mu3}}&0&-{\it \alpha_{10}^{11}}-\beta-\lambda_2&{\frac {\beta\,\mu4}{\mu
			3}}\\ \noalign{\medskip}0&{\frac {{\it \alpha_{01}^{11}}\,\mu2}{\mu4}}&0&-2\,\beta-
	\lambda_2\end {array} \right] 	
	\end{align*}
	As a triangular superior matrix with diagonal elements strictly positive, $M$ is invertible. Moreover, $M_1$ and $M_2$ are invertible as diagonally dominant matrices whenever $(\lambda_1,\lambda_2)\in\reels_+^*\times\reels_+^*$:\\
	First line of \eqref{murel2} gives:
	\[\left(\alpha_{00}^{10}+\alpha_{00}^{01}\right)\mu^w_1=\beta\mu^w_2+\beta\mu^w_3\]
	So $\forall \lambda_1\in\reels_+^*$:
	\[\lvert M_1^{11}(\lambda_1)\rvert-\sum_{j=2}^{4}\lvert M_1^{1j}(\lambda_1)\rvert = \lambda_1>0\]
	For other lines we have $\forall \lambda_1\in\reels_+^*$:
	\[\lvert M_1^{22}(\lambda_1)\rvert-\sum_{j\neq 2}\lvert M_1^{2j}(\lambda_1)\rvert = \lambda_1>0\]
	\[\lvert M_1^{33}(\lambda_1)\rvert-\sum_{j\neq 3}\lvert M_1^{3j}(\lambda_1)\rvert = \frac{\alpha_{00}^{10}}{\mu^w_3}+\lambda_1>0\]
	\[\lvert M_1^{44}(\lambda_1)\rvert-\sum_{j\neq 4}\lvert M_1^{4j}(\lambda_1)\rvert = \frac{\alpha_{01}^{11}}{\mu^w_4}+\lambda_1>0\]
	We have similar results for $M_2$ which shows $M_1(\lambda_1)$ et $M_2(\lambda_2)$ are diagonally dominant matrices so they are invertible $\forall (\lambda_1,\lambda_2)\in\reels_+^*\times\reels_+^*$. Hence, if $\pi$ is an invariant measure for $(X_t)_{t\geq 0}$, $\forall (\lambda_1,\lambda_2)\in\reels_+^*\times\reels_+^*$:
	\[\mathcal{L}\pi(0,0)=\begin{bmatrix}
	1\\ 1\\ 1\\ 1\\
	\end{bmatrix}\]
	By~\eqref{partial-TL}:
	\begin{align}\label{eq_lap_0}
	\mathcal{L}\pi(\lambda_1,0) =	\left(M_1(\lambda_1)\right)^{-1}\left[ \begin {array}{c} 0\\ \noalign{\medskip}-{\it \alpha_{00}^{01}} \frac {{\it \mu^w_1}}{{\it \mu^w_2}} \\ \noalign{\medskip}0
	\\ \noalign{\medskip}-{\it 
		\alpha_{10}^{11}}\frac {{\it \mu^w_3}}{{\it \mu^w_4}} \end {array} \right]
	\ \ \ and\ \ \ 
	\mathcal{L}\pi(0,\lambda_2) =\left(M_2(\lambda_2)\right)^{-1}\left[ \begin {array}{c} 0\\ \noalign{\medskip}0\\ \noalign{\medskip}-{\it \alpha_{00}^{10}} \frac {{\it \mu^w_1}}{{\it \mu^w_3}} 
	\\ \noalign{\medskip}-{\it 
		\alpha_{01}^{11}}\frac {{\it \mu^w_2}}{{\it \mu^w_4}} \end {array} \right]
	\end{align}
	By~\eqref{eqTL}
	\[\mathcal{L}\pi(\lambda_1,\lambda_2)=(M(\lambda_1,\lambda_2))^{-1}\left[ \begin {array}{c} 0\\ \noalign{\medskip}-{\it \alpha_{00}^{01}\ \mathcal{L}\pi_{00}} \left( \lambda_1,0
	\right)\frac {{\it \mu^w_1}}{{\it \mu^w_2}} \\ \noalign{\medskip}-{\it \alpha_{00}^{10} \mathcal{L}\pi_{00}} \left( 0,\lambda_2 \right) \frac {{\it \mu^w_1}}{{\it \mu^w_3}}
	\\ \noalign{\medskip}{\it -\alpha_{01}^{11}}\,{\it \mathcal{L}\pi_{01}} \left( 0,\lambda_2 \right)\frac {{\it \mu^w_2}}{{\it \mu^w_4}} -{\it 
		\alpha_{10}^{11}}\,{\it \mathcal{L}\pi_{10}} \left( \lambda_1,0 \right)\frac {{\it \mu^w_3}}{{\it \mu^w_4}} \end {array} \right]\]
	We conclude using the fact the Laplace transform of a law determines it, so $\pi$ is unique.
\end{proof}

\newpage
%

\end{document}